\providecommand{\U}[1]{\protect\rule{.1in}{.1in}}
\newtheorem{theorem}{Theorem}
\newtheorem{corollary}[theorem]{Corollary}
\newtheorem{definition}[theorem]{Definition}
\newtheorem{lemma}[theorem]{Lemma}
\newtheorem{proposition}[theorem]{Proposition}
\newtheorem{remark}[theorem]{Remark}
\newcommand{\de}{\,\mathrm{d}}
\newenvironment{proof}[1][Proof]{\noindent\textbf{#1.} }{\ \rule{0.5em}{0.5em}}
\numberwithin{equation}{section}
\let\pdfoutput=\undefined\fi
\begin{document}

\title{Homogenization results for a coupled system of reaction-diffusion equations}
\author{G. Cardone\thanks{Universit\`{a} del Sannio, Department of Engineering, Corso
Garibaldi, 107, 82100 Benevento, Italy; email: giuseppe.cardone@unisannio.it},
C. Perugia\thanks{University of Sannio, DST, Via Port'Arsa 11, 82100 Benevento,
Italy; email: cperugia@unisannio.it}, C. Timofte\thanks{University of Bucharest, Faculty of Physics, Bucharest-Magurele,  P.O. Box MG-11, Romania; email:
claudia.timofte@g.unibuc.ro.}}
\maketitle

\begin{abstract}
\medskip The macroscopic behavior of the solution of a coupled system of partial differential equations arising in the modeling of reaction-diffusion processes in periodic porous media is analyzed. Our mathematical model can be used for studying several metabolic processes taking place in li\-ving cells, in which biochemical species can diffuse in the cytosol and react both in the cytosol and also on the organellar membranes. The coupling of the concentrations of the biochemical species is realized via various properly scaled nonlinear reaction terms. These nonlinearities, which model, at the microscopic scale, various volume or surface  reaction processes, give rise in the macroscopic model to different effects, such as the appearance of additional source or sink terms or of a non-standard diffusion matrix.\\

{\bf Keywords}: homogenization; nonlinear flux conditions; reaction-diffusion equations.

\medskip

{\bf AMS subject classifications}: 35B27; 35K57; 35Q92.

\end{abstract}

\section{Introduction\label{sect1}}

The goal of this paper is to analyze, via periodic homogenization techniques, the macroscopic behavior of the solution of a coupled system of partial differential equations arising in the modeling of reaction-diffusion phenomena in periodic porous media. Our results can be useful for studying some metabolic processes occurring in biological cells, in which biochemical species can diffuse in the cytosol and react both in the cytosol and also  on the membranes of various organelles,  which are present in the cytoplasm, such as chloroplasts, endoplasmic reticulum or mitochondria.

From a mathematical point of view, we consider a perforated domain $\Omega^{\ast}_\varepsilon$, obtained by removing from a smooth domain $\Omega$ a set of periodically distributed inclusions, $\varepsilon$ being a positive small parameter related to the size and the periodicity of the structure. For applications in biology, the domain $\Omega$ can be seen, in a simplified setting, as the cytoplasm of a biological cell, consisting of the cytosol $\Omega^{\ast}_\varepsilon$ and of various organelles $S^\varepsilon$, periodically distributed, with period $\varepsilon$, in the cytoplasm (see Section \ref{Sec2}). In such a geometry, we consider, at the microscale, a coupled system of three reaction-diffusion equations, governing the evolution of the concentration of three types of biochemical species (metabolites) in the perforated domain (cytosol), with suitable nonlinear boundary conditions at the surface of the inclusions (at the organellar membranes) and initial conditions. On the outer cellular membrane $\partial \Omega$, we impose Dirichlet conditions (see system \eqref{eqmicro} and \cite{DV}).

The model assumes that the coupling between the concentrations of the biochemical species is realized through nonlinear reaction terms, appearing in the perforated domain and at the boundary of the inclusions, as well. In particular, the coupling phenomenon at the boundary of the inclusions is described by imposing different nonlinear flux conditions for the three concentration fields. The nonlinear functions appearing in these flux conditions depend on the concentrations of the three species and are properly scaled, depending on the processes occurring at the membranes of the cellular organelles.
The nonlinearities of our model were inspired by kinetics corresponding to multi-species enzyme catalyzed reactions, which are generalizations of the classical Michaelis-Menten kinetics to multi-species reactions.  A rigorous mathematical model for such processes is analyzed in \cite{Gahn}$\div$\cite{Gahn3}, but with a different scaling and taking into account more complex phenomena.\\
The main mathematical difficulties behind our study come from the strong coupling between the equations governing the evolution of the concentrations of the biochemical species and, also, from the particular scaling of the nonlinear boundary terms. As a matter of fact, the novelty of our paper consists exactly in the special coupling and scaling of the boundary terms. The form of the nonlinearities arising at the microscale and this non-standard coupling lead, at the macroscale, to the appearance of additional source or sink terms and of a non-standard diffusion matrix, which is not constant (see Theorem \ref{teounf} and Corollary \ref{teohom}). Hence, we show that some fast reactions occurring, at the microscale, on the boundaries  of  $S^\varepsilon$ (see the second equation in system \eqref{eqmicro}) could lead, at the macroscale, to a cross - diffusion system (see for this terminology the recent papers \cite{JP} and \cite{J}).\\

The metabolic pathway which our system better fits is the fatty acid activation on the outer mitochondrial membrane (see \cite{Enz}). Such a reactive system involves three metabolites, a fatty acid, the adenosine triphosphate (ATP) and the coenzyme A (CoA), which can diffuse in the cytosol and can also react on the mitochondrial membrane with different rate, with ATP being the slowest one. They can react in the cytosol thanks to the enzyme acyl coa - synthetase present on the outer mitochondrial membrane. Since this metabolic pathway is strongly ATP-dependent, the concentration of ATP itself limits the mithocondrial availability of fatty acid and CoA. This aspect could well explain why, in our limit problem, the effective diffusion matrices of the fastest metabolites depend on the concentration of the slowest one. Indeed, a shortage of ATP in the cytosol would imply the stop of the enzyme catalyzed reaction previously described limiting the diffusion of the fatty acid and the CoA in the whole cell.\\

Our results might find some applicability also in the case of a porous medium in which various chemical substances are allowed to diffuse and to react inside and on the walls of the porous medium and in multiscale modeling of colloidal dynamics in porous media (see, for instance, \cite{Hornung} and \cite{KMK}).\\

As already mentioned, the problem we consider here comprises nonlinear terms, properly (differently) scaled, asking, therefore, for different  analytical techniques. For obtaining our macroscopic model, we use suitable extension operators (see \cite{Cio-Pau}, \cite{Hopker}, \cite{Bohm}, and \cite{Meirmanov}) and the periodic unfolding method, adapted to time-dependent functions (see, for instance, \cite{Cio-Dam-Don-Gri-Zaki}, \cite{Cio-Dam-Gri1}, \cite{Cioranescu-Donato-Zaki}, \cite{Amar1}, and \cite{Amar2}).

Related homogenization problems for reaction-diffusion problems in porous media were addressed, for instance, in \cite{Hornung}, \cite{Hor-Jag}, \cite{Jager}, \cite{Neu-Jag}, \cite{Muntean1}, \cite{Malte}, \cite{Fat-Mun-Pta}, \cite{Gahn}, \cite{Gahn2}, \cite{Gahn3}, \cite{Graf}, \cite{Mar-Pta}, \cite{Pop1}, \cite{Pop2}, \cite{Timofte1}, and \cite{Timofte2}. Problems involving fast surface reaction terms can be encountered, for instance, in \cite{Amar},  \cite{Allaire}, \cite{Allaire1}, and \cite{Iji-Mun}. As far as we know, there are only a few recent papers in which homogenized matrices depending on the solution itself appear. Non-constant homogenized matrices in which a nonlinear function depending on the solution of the macroscopic problem appears can be found, in different contexts from ours and for particular types of nonlinearities, in the recent papers \cite{Allaire}, \cite{Iji-Mun}, and \cite{Bun-Tim}. For the case in which the homogenized matrix depends on the solution of the limit problem, we also refer the reader to \cite{Allaire1}, \cite{Allaire2}, \cite{Allaire3}, \cite{Timofte2}, and \cite{DLN}.

The rest of the paper is structured as follows: in Section \ref{Sec2}, we describe the geometry of the periodic porous medium and we set the microscopic problem. Existence and uniqueness results and {\it a priori} estimates for the solution of our microscopic problem are obtained in Section \ref{Sec3}. The main homogenization results of this paper are stated and proven in Section \ref{Sec4}.

\section{Setting of the problem\label{Sec2}}

We start this section by describing the geometry of the problem. We assume that the porous medium possesses a periodic microstructure. Let $Y =(0, 1)^n$ be the reference cell and $S$ be a closed strict subset of $\overline{Y}$ with the boundary $\Gamma:=\partial S$ being Lipschitz continuous. We restrict ourselves to the case in which $S$ is connected, but this assumption can be easily removed, since we can deal with the case in which $S$ has a finite number of connected components. Let $Y^*:=Y\setminus S$. The sets $S$ and $Y^*$ will be the reference inclusion and the perforated cell, respectively. Now, let us consider a bounded connected smooth open set $\Omega$ in $\Bbb R^n$, with a Lipschitz boundary $\partial \Omega$ (we shall be interested in the physically relevant cases $n=2$ or $n=3$). Let us notice that, for simplicity, we restrict ourselves here to a case in which the domain $\Omega$ is supposed to be representable by a finite union of axis-parallel cuboids with corner coordinates in $\Bbb Z^n$ (see \cite{Gra-Pet}, \cite{Bohm}, and \cite{Hopker}), but the results given in this paper still hold true for more general domains $\Omega$ (see \cite{Bohm} and \cite{Hopker}). Let $\varepsilon\in (0,1)$ be a small positive parameter, related to the characteristic dimensions of the structure and which takes values in a sequence of strictly positive numbers tending to zero, such that  the stretched domain $\varepsilon ^{-1} \Omega$ can be represented as a finite union of axis-parallel cuboids having corner coordinates in ${\mathbb Z}^n$. For each ${\bf k}\in \Bbb Z^n$, let $Y_{\bf k}={\bf k}+Y^*$, $\Gamma_{\bf k}:={\bf k}+\Gamma$, and $K_\varepsilon:=\{{\bf k}\in \Bbb Z^n \; | \; \varepsilon Y_{\bf k} \subset \Omega\}$. Then, setting $S^\varepsilon:=\bigcup\limits_{{\bf k}\in K_\varepsilon}\varepsilon({\bf k}+S)$, the perforated domain $\Omega^{\ast}_{\varepsilon}$ is obtained by removing from $\Omega$ the set of inclusions $S^\varepsilon$, i.e. $\Omega^{\ast}_\varepsilon:=\Omega\setminus S^\varepsilon$. Moreover, if $\Gamma^\varepsilon:=\bigcup\limits_{{\bf k}\in K_\varepsilon}\varepsilon\Gamma_{\bf k}$ denotes the inner boundary of the porous medium, it holds that $\Gamma^\varepsilon\cap \partial \Omega=\emptyset$.

In such a periodic microstructure, we shall consider a coupled system of nonlinear reaction-diffusion equations, with suitable boundary and initial conditions. The coupling phenomenon at the boundary of the inclusions is described by imposing suitably scaled nonlinear flux conditions for the concentrations. More precisely, if we denote by $[0,T]$, with $T>0$, the time interval of interest, we shall analyze the effective behavior, as the small parameter $\varepsilon \rightarrow 0$, of the solution  $(c^{\varepsilon}_1, c^{\varepsilon}_2, c^\varepsilon_3)$ of the following system:
\begin{equation}
\left\{
\begin{array} {ll}
\partial_{t}c_{i}^{\varepsilon}-\hbox{div} (D_i^\varepsilon \nabla c_{i}^{\varepsilon
})=F_{i}^{\varepsilon}%
(x,c_{1}^{\varepsilon},c_{2}^{\varepsilon}, c_{3}^{\varepsilon})&\text{ in
}(0,T)\times\Omega^{\ast}_{\varepsilon},\,\, i\in \{1,2,3\}\\
\\
D_i^\varepsilon \nabla c_{i}^{\varepsilon}\cdot\nu^\varepsilon=\displaystyle \frac{1}{\varepsilon}\, G_{i}(c_{1}^{\varepsilon}%
,c_{2}^{\varepsilon}, c_{3}^{\varepsilon})&\text{ on }(0,T)\times
\Gamma^{\varepsilon},\, \, i\in \{1,2\}\\
\\
D_3^\varepsilon \nabla c_{3}^{\varepsilon}\cdot\nu^\varepsilon=\varepsilon\, G_{3}^{\varepsilon}(x,c_{1}^{\varepsilon}%
,c_{2}^{\varepsilon}, c_{3}^{\varepsilon})&\text{ on }(0,T)\times
\Gamma^{\varepsilon}\\
\\
c_{i}^{\varepsilon}=0 & \text{ on }(0,T)\times\partial\Omega,\, \,  i\in \{1,2,3\}\\
\\
c_{i}^{\varepsilon}(0)=c_{i}^0 & \text{ in }\Omega^{\ast}_{\varepsilon},\, \,  i\in \{1,2,3\},
\end{array}
\right.  \label{eqmicro}%
\end{equation}
where $\nu^\varepsilon$ is the unit outward normal to $\Gamma^{\varepsilon}$.\\
In \eqref{eqmicro}, the concentrations $c_{i}^{\varepsilon}$ of the chemical species (metabolites), the diffusion matrix $D^\varepsilon$ and the nonlinear reactions rates $F_{i}^{\varepsilon}$, $i\in \{1,2,3\}$, $G_{i}$, $i\in \{1,2\}$, and $G_{3}^{\varepsilon}$ satisfy suitable conditions. More precisely, we make the following assumptions on the data (see also \cite{Gahn, Conca-Diaz-Timofte, DLN}): \\
\\
$(\mathbf{H}_1)$ For $i\in \{1,2,3\}$, the diffusion matrices are given by $D_i^\varepsilon(x)=D_i\left(\dfrac{x}{\varepsilon}\right)$, where
\begin{itemize}
\item [1.] $D_i\in (L^\infty(Y))^{n\times n}$ is a  $Y$-periodic and symmetric matrix-field;
\item [2.] for any $\xi\in\mathbb{R}^{n}$, $\alpha |\xi|^2 \leq (D_i(y)\xi, \xi) \leq \beta |\xi|^2$, almost everywhere in $Y$, for some positive real constants $\alpha$ and $\beta$.
\end{itemize}
$(\mathbf{H}_2)$ For $i\in \{1,2,3\}$, the reaction rates are given by $F_{i}^{\varepsilon}(x,s_{1},s_{2},s_{3})=F_i(\frac
{x}{\varepsilon},s_{1},s_{2},s_{3})$, where $F_i:\mathbb{R}^{n}\times\mathbb{R}^{3}\rightarrow\mathbb{R}$ satisfies the following hypotheses:
\begin{itemize}
\item [1.] $F_i$ is continuous;
\item [2.] $F_i(\cdot,s)$ is a $Y$-periodic function for all $s\in \mathbb{R}^{3}$;
\item [3.] $F_i(y,\cdot)$ is Lipschitz continuous for all $y\in \mathbb{R}^{n}$ with constant independent of $y$;
\item [4.] $F_i(y, 0)=0$ for all $y\in \mathbb{R}^n$.
\end{itemize}
\noindent $(\mathbf{H}_3)$ For $i\in \{1,2\}$, the nonlinear functions in the first flux condition are given by
\begin{equation}\label{formG}
G_{i}(s_{1},s_{2},s_{3})=(-1)^i \left (s_1-s_2\right ) \, H \left (s_{3}\right),
\end{equation}
where $H: \Bbb R \rightarrow \Bbb R^*_+$ satisfies the following hypotheses:
\begin{itemize}
\item [1.] there exists a positive constant $l$ such that
\begin{equation*}
0\leq H(s)\leq l<\infty \text{  for all } s\in\Bbb R;
\end{equation*}
\item [2.] $H(0)=0$;
\item [3.] $H\in C^1(\mathbb{R})$;
\item [4.] there exists a positive constant $L$ such that
\begin{equation*}
\vert H^{'}(s)\vert \leq L<\infty \text{  for all } s\in\Bbb R.
\end{equation*}
\end{itemize}
$(\mathbf{H}_4)$ The nonlinear function in the second flux condition is given by $G_{3}^{\varepsilon}(x,s_{1},s_{2},s_{3})=G_3\left (\frac
{x}{\varepsilon},s_{1},s_{2},s_{3}\right)$, where $G_3: \mathbb{R}^{n}\times\mathbb{R}^{3}\rightarrow\mathbb{R}$ satisfies the following hypotheses:
\begin{itemize}
\item [1.] $G_3$ is continuous;
\item [2.] $G_3(\cdot,s)$ is a $Y$-periodic function for all $s\in \mathbb{R}^{3}$;
\item [3.] $G_3(y,\cdot)$ is Lipschitz continuous for all $y\in \mathbb{R}^{n}$ with constant independent of $y$;
\item [4.] $G_3(y, 0)=0$ for all $y\in \mathbb{R}^n$.
\end{itemize}
\bigskip
\noindent $(\mathbf{H}_5)$ Set $\left( \cdot\right)_- =\min\left\{
\cdot,0\right\}$. We assume that for a.e. $(x,s)\in \mathbb{R}^n\times\mathbb{R}^3$ it holds
\begin{equation}\label{assF2}
\begin{array}{l}
F_{1}^{\varepsilon}(x,s_{1},s_{2},s_{3})\left(  s_{1}\right)  _{-}
+F_{2}^{\varepsilon}(x,s_{1},s_{2},s_{3})\left(  s_{2}\right)  _{-} +F_{3}^{\varepsilon}(x,s_{1},s_{2},s_{3})\left(  s_{3}\right)  _{-}\\
\\
\leq
C_1\left(  \left\vert \left(  s_{1}\right)  _{-}\right\vert ^{2}+\left\vert
\left(  s_{2}\right)  _{-}\right\vert ^{2}+\left\vert
\left(  s_{3}\right)  _{-}\right\vert ^{2}\right)
\end{array}
\end{equation}
and
\begin{equation}\label{assG}
G_{3}^{\varepsilon}(x,s_{1},s_{2},s_{3})\left(  s_{3}\right)  _{-} \leq
C_2\left(  \left\vert \left(  s_{1}\right)  _{-}\right\vert ^{2}+\left\vert
\left(  s_{2}\right)  _{-}\right\vert ^{2}+\left\vert
\left(  s_{3}\right)  _{-}\right\vert ^{2}\right),
\end{equation}
with $C_1$ and $C_2$ positive constants independent of $\varepsilon$.\\
\\
$(\mathbf{H}_6)$ There exist $\Lambda>0$ and $A>0$ such that (see \cite{Gahn3}), for $i\in \{1,2,3\}$,
\begin{equation}\label{assFG_0}
\left\{
\begin{array}{l}
F_i(\cdot, s) \leq A s_i \quad \textrm{ for all   } s\in \Bbb R^3 \textrm{   with   } s_i \geq \Lambda,\\
\\
G_3(\cdot, s) \leq A s_3 \quad \textrm{ for all   } s\in \Bbb R^3 \textrm{   with   } s_3 \geq \Lambda.\\
\end{array}
\right.
\end{equation}
$(\mathbf{H}_7)$ The initial concentrations $c_{i}^0\in L^{2}(\Omega)$,
$i\in \{1, 2, 3\}$, are assumed to be non-negative and bounded independently with respect
to $\varepsilon$ by a constant $\Lambda$ (the same constant as the one in assumption $(\mathbf{H}_6)$, i.e. for $i\in \{1,2,3\}$ it holds
\begin{equation}\label{binitial}
0\leq c_i^0\leq \Lambda.
\end{equation}

From the Lipschitz continuity, we obtain that for all $(x,s)\in \mathbb{R}^n\times \mathbb{R}^3$ the following growth conditions hold:
\begin{equation}\label{F2}
\left\vert F_i^\varepsilon (x,s_{1},s_{2},s_{3})\right\vert \leq C_3(1+|s_{1}%
|+|s_{2}|+|s_{3}| ),\,\,i\in \{1,2,3\}
\end{equation}
and
\begin{equation}\label{assG1}
\left\vert G_3^\varepsilon (x,s_{1},s_{2},s_{3})\right\vert \leq C_4(1+|s_{1}%
|+|s_{2}|+\left\vert s_{3}\right\vert ),
\end{equation}
with $C_3$ and $C_4$ positive constants independent of $\varepsilon$.\\
\smallskip
Moreover by $(\mathbf{H}_2)_4$ and $(\mathbf{H}_4)_4$, \eqref{F2} and \eqref{assG1} become
$$\vert F_i^\varepsilon (x,s_{1},s_{2},s_{3})\vert \leq C_5(|s_{1}
|+|s_{2}|+|s_{3}| ),\,\,i\in \{1,2,3\}
$$
and
$$
\vert G_3^\varepsilon (x,s_{1},s_{2},s_{3})\vert \leq C_6(|s_{1}
|+|s_{2}|+|s_{3}| ).
$$

\begin{remark}\label{time} For simplicity, we assume that all the parameters involved in our model are time independent, but the case in which they depend on time can be also treated.
\end{remark}

\begin{remark}\label{remF}
We point out that we don't have too much freedom for our nonlinearities and for their scalings, since, when working with systems, it is difficult to ensure positivity, essential boundedness and, most important, strong convergence results.
\end{remark}

\begin{remark}\label{remG}
The adsorption/desorption processes and the reactions taking place at the surface of the inclusions depend on many factors, such as the nature of the molecules (various molecules can have very low or very high adsorption, some molecules can adsorb and react with other molecules, which do not adsorb), the value of the concentrations of the involved species, the nature of the solid surface, the total surface area of the adsorbent, etc. So, various surface kinetics corresponding to multi-species enzyme catalyzed reactions, such as Langmuir or Eley-Rideal mechanisms, can be considered in order to describe the nonlinear fluxes at the surface of the inclusions. The fact that the reactions of the  biochemical species at the surface $\Gamma^\varepsilon$ are assumed to be fast or slow is reflected in the different scaling of the corresponding fluxes in equations \eqref{eqmicro}$_{2,3}$. Such a specific scaling will lead, at the macroscale, to different effects, namely a non-standard homogenized matrix and an additional source or sink term, respectively (see Theorem \ref{teounf} and Corollary \ref{teohom}).
\end{remark}
\textit{Example 1.}\,
Let us give here some concrete examples of nonlinear functions $F_i$, for $i\in \{1,2,3\}$ (see, also, \cite{Gahn}, Section 5 in \cite{Gahn2}, and Chapter 4 in \cite{Gahn3}).  \\
If $a_0(y)$ is a positive bounded measurable $Y$ -periodic function, $c_k>0$ and $k\in \{0,1\}^3$, then 
\[
F_i(y,s_1, s_2, s_3)= a_0(y)\displaystyle \frac{s_1 s_2 s_3}{\sum_{\vert k\vert \leq 3}c_k s^k}.
\]
For simplicity, we can take all the coefficients in front of our variables to be equal to one. Thus, we consider functions $F_i$, for $1\leq i\leq 3$, of the following form:
\[
F_i(y,s_1, s_2, s_3)= a_0(y) \displaystyle \frac{s_1 s_2 s_3}{1+s_1+s_2+s_3+ s_1 s_2 +s_1s_3+s_2s_3+s_1 s_2 s_3}.
\]
Other possible concrete examples could be:
\[
F_1(y, s_1, s_2, s_3)=a_0 (y) \, \displaystyle \frac{s_1 s_2}{1+s_1+s_2+s_1 s_2 +s_1 s_2 s_3}+s_1,
\]
\[
F_2(y, s_1, s_2, s_3)=\displaystyle \frac{s_2 s_3}{1+s_2+s_3+s_2s_3+s_1 s_2 s_3},
\]
\[
F_3(y, s_1, s_2, s_3)=\displaystyle \frac{s_1 s_3}{1+s_1+s_3+s_1s_3+s_1 s_2 s_3}+s_3.
\]

A very simplified setting could be to assume that each $F_i$ depends only on the corresponding concentration $s_i$ and it is given by the Michaelis-Menten kinetics arising in biochemistry when dealing with enzyme-catalyzed reactions. \\

\textit{Example 2.}\,
For $G_3$ we might take, for instance, kinetics similar to the ones used for $F_i$ or kinetics of the form
\[
G_3(s_1, s_2, s_3)= a(y) \dfrac{s_1^4}{1+s_1^4} \, s_3 +s_3,
\]
where $a(y)$ is a positive bounded measurable $Y$ -periodic function.\\

\textit{Example 3.}\,
As an example for a nonlinear function $H$ in \eqref{formG}, we can consider the Langmuir kinetics, i.e.
\begin{equation} \label{exampleH}
H(s)=\displaystyle \frac{a s}{1+b s}, \quad a, b>0.
\end{equation}
Thus, for a concrete relevant example of nonlinear functions $G_i$, for $i\in \{1,2\}$, we have
\begin{equation}\label{exGi}
G_i(s_1, s_2, s_3)=(-1)^i a \dfrac{(s_1-s_2)s_3}{1+b s_3}.
\end{equation}
We remark, that with this choice, the boundary term involving function $H$ is well-defined. Indeed, the function  in \eqref{exGi} satisfies assumption $(\mathbf{H}_3)$ since the concentration fields $c_i^\varepsilon$, for $i\in \{1,2,3\}$, are positive and essentially bounded, as we shall prove in Section \ref{Sec3}.\\\\

For negative values of variables, in the above functions singularities may appear. To handle this type of nonlinearity, as in \cite{Gahn}$\div$\cite{Gahn3}, we first consider the modified kinetics with the modulus appearing in all the terms in the denominator. After proving existence and uniqueness results for the problem with
the modified kinetics, we prove that the solution is nonnegative, and, so, it is also a
solution of the original problem (see, for instance, Example 1 in [24]).\\

\begin{remark}\label{rem2}

Actually, having in mind \eqref{exampleH}, it would be enough to put $0<H'\leq L$ in order to consider a one to one and increasing function  as done for instance in \cite{Allaire}.  In this sense, our assumption $(\mathbf{H}_3)_4$ is more general from the mathematical point of view.  
\end{remark}

The assumptions we made are natural structural assumptions. Most of them are mathematical requirements, which, however, have a strong physical justification, as underlined in the following remarks.

\begin{remark}\label{remhyp1} Hypotheses $(\mathbf{H}_1)$, $(\mathbf{H}_2)_{1,2,3}$, $(\mathbf{H}_3)$, and $(\mathbf{H}_4)_{1,2,3}$ are, on one hand, mathematical and are needed for proving existence and uniqueness results and estimates for the microscopic model. In general, apart from linear functions, one has to assume Lipschitz or monotonicity conditions to ensure well-posedness. $(\mathbf{H}_2)_{1,2,3}$ is also needed to prove strong convergence. However, $(\mathbf{H}_2)$ and $(\mathbf{H}_4)$ are standard hypotheses used for this kind of problems in the literature, describing chemical reactions which are relevant in many concrete applications. In biology and in chemistry, many important processes behave like having a monotone or a Lipschitz character and, so, our hypotheses on the nonlinear functions $F_i$ and $G_i$ are physically justified. Also, the regularity conditions imposed for our nonlinearities are technical requirements. This kind of regularity can probably be improved. The conditions asked in $(\mathbf{H}_3)$ for $G_i$ are not standard, but such nonlinearities are still relevant for describing several chemical reactions, such as those implied in the Eley-Rideal mechanisms. 
We notice that $(\mathbf{H}_2)_4$ and $(\mathbf{H}_4)_4$ are just physical assumptions: in absence of
concentrations, we do expect no reaction to occur. These conditions are not relevant from a mathematical
point of view. Technically, such conditions can be removed.\\
\end{remark}

\begin{remark}\label{remhyp2} Hypotheses $(\mathbf{H}_5)$, $(\mathbf{H}_6)$, $(\mathbf{H}_7)$ and again $(\mathbf{H}_3)$ are needed for ensuring the
non-negativity and the essential boundedness of the solutions, such properties being, in fact, reasonable from a biological point of view (see, for example, \cite{Gahn2} and \cite{Gahn3}). For instance, $(\mathbf{H}_5)$ is a technical requirement and it ensures the positivity of our concentrations. However, we mention that other choices are still possible. In fact, we could ask, as in \cite{HK}, that $F_i(\cdot, s)\geq 0$, for all $s=(s_1, s_2, s_3)$ with $s_i\leq 0$. This means, in fact, that if a chemical species vanishes, then it
can only be produced.
\end{remark}

The main difficulties behind our study come from the strong coupling between the equations governing the evolution of the concentration fields, from the nonlinearity of the model and from the interesting interface phenomena (which are far from being perfectly understood) appearing in the heterogeneous media under consideration.

\section{The microscopic model}\label{Sec3}

\subsection{Existence and uniqueness for the microscopic model}

The following lemma will be used several times throughout the paper (see \cite[Lemma 5]{Gahn}).
\begin{lemma}\label{lemtrace}
(i) Let $\Omega$ be an arbitrary Lipschitz domain and $\delta>0$. Then,
\begin{equation}\label{trace1}
\Vert u\Vert^2_{L^2(\partial \Omega)}\leq C_{\delta} \Vert u\Vert^2_{L^2(\Omega)}+\delta \Vert \nabla u\Vert^2_{L^2(\Omega)},
\end{equation}
for all $u\in H^1(\Omega)$ and with a constant $C_\delta>0$ depending on $\delta$.\\
(ii) Let $p\in [1,+\infty)$ and $u_\varepsilon \in L^p(\Omega^{\ast}_{\varepsilon})$. Since the trace operator from $W^{1,p}(Y^*)$ into $L^p(\Gamma)$ is continuous, one has
\begin{equation}\label{trace2}
\varepsilon\Vert u^\varepsilon\Vert^p_{L^p(\Gamma^\varepsilon)}\leq C\left( \Vert u^\varepsilon\Vert^p_{L^p(\Omega^{\ast}_{\varepsilon})}+\varepsilon^p\Vert \nabla u^\varepsilon \Vert^p_{L^p(\Omega^{\ast}_{\varepsilon})}\right).
\end{equation}
(iii) Under the same assumptions as in (ii) and for an arbitrary $\delta>0$, we have
\begin{equation}\label{trace3}
\varepsilon\Vert u^\varepsilon\Vert^p_{L^p(\Gamma^\varepsilon)}\leq C_\delta \Vert u^\varepsilon\Vert^p_{L^p(\Omega^{\ast}_{\varepsilon})}+\delta\varepsilon^p \Vert \nabla u^\varepsilon \Vert^p_{L^p(\Omega^{\ast}_{\varepsilon})}.
\end{equation}
\end{lemma}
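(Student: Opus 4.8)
The plan is to handle the three parts in order, deriving (ii) and (iii) from a cell-by-cell rescaling of a single trace inequality on the fixed reference geometry, and proving that fixed-domain inequality — in both its plain and its $\delta$-refined form — by a divergence-theorem argument. For part (i), I would start from a transversal vector field: choose $V\in (C^1(\overline{\Omega}))^n$ with $V\cdot\nu\ge 1$ on $\partial\Omega$, which exists for any bounded Lipschitz domain (construct it locally in boundary charts and glue by a partition of unity). Applying the divergence theorem to $u^2 V$ gives
\[
\int_{\partial\Omega} u^2\,(V\cdot\nu)\,\de\sigma = \int_\Omega \big(2u\,\nabla u\cdot V + u^2\,\mathrm{div}\,V\big)\,\de x,
\]
so that, since $V\cdot\nu\ge 1$ and $|V|,|\mathrm{div}\,V|\le C$,
\[
\|u\|_{L^2(\partial\Omega)}^2 \le C\big(\|u\|_{L^2(\Omega)}^2 + \|u\|_{L^2(\Omega)}\|\nabla u\|_{L^2(\Omega)}\big).
\]
Splitting the interpolated term by Young's inequality, $C\|u\|_{L^2(\Omega)}\|\nabla u\|_{L^2(\Omega)} \le C_\delta\|u\|_{L^2(\Omega)}^2 + \delta\|\nabla u\|_{L^2(\Omega)}^2$, yields \eqref{trace1}; for $u\in H^1(\Omega)$ the identity is justified by density of smooth functions.

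For parts (ii) and (iii) the key step is a single rescaling on each periodicity cell. On the fixed perforated cell $Y^*$ the continuity of the trace operator $W^{1,p}(Y^*)\to L^p(\Gamma)$ gives $\|v\|_{L^p(\Gamma)}^p \le C(\|v\|_{L^p(Y^*)}^p + \|\nabla v\|_{L^p(Y^*)}^p)$, while the very same divergence-theorem-plus-Young argument as in (i), carried out on $Y^*$ for the $p$-th power, produces the refinement $\|v\|_{L^p(\Gamma)}^p \le C_\delta\|v\|_{L^p(Y^*)}^p + \delta\|\nabla v\|_{L^p(Y^*)}^p$ for every $\delta>0$, with constants depending only on $Y^*$ and $p$. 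For ${\bf k}\in K_\varepsilon$ I set $v(y)=u^\varepsilon(\varepsilon({\bf k}+y))$, $y\in Y^*$; the change of variables $x=\varepsilon({\bf k}+y)$ gives the scalings $\int_\Gamma |v|^p\,\de\sigma = \varepsilon^{-(n-1)}\int_{\varepsilon\Gamma_{\bf k}}|u^\varepsilon|^p\,\de\sigma$, $\int_{Y^*}|v|^p\,\de y = \varepsilon^{-n}\int_{\varepsilon Y_{\bf k}}|u^\varepsilon|^p\,\de x$ and, since $\nabla_y v=\varepsilon\,\nabla_x u^\varepsilon$, $\int_{Y^*}|\nabla_y v|^p\,\de y = \varepsilon^{p-n}\int_{\varepsilon Y_{\bf k}}|\nabla u^\varepsilon|^p\,\de x$. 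Inserting these into the $\delta$-version of the cell inequality and multiplying through by $\varepsilon^{n}$ turns it into
\[
\varepsilon\int_{\varepsilon\Gamma_{\bf k}}|u^\varepsilon|^p\,\de\sigma \le C_\delta\int_{\varepsilon Y_{\bf k}}|u^\varepsilon|^p\,\de x + \delta\,\varepsilon^p\int_{\varepsilon Y_{\bf k}}|\nabla u^\varepsilon|^p\,\de x,
\]
and summing over ${\bf k}\in K_\varepsilon$ — using that the $\varepsilon Y_{\bf k}$ are disjoint subsets of $\Omega^{\ast}_{\varepsilon}$ and $\Gamma^\varepsilon=\bigcup_{\bf k}\varepsilon\Gamma_{\bf k}$ — gives \eqref{trace3}; running the identical computation with the plain inequality (the single constant $C$ in front of both terms) yields \eqref{trace2}.

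The point requiring the most care — and the reason (iii) is \emph{genuinely stronger} than (ii) rather than a consequence of it — is the bookkeeping of the powers of $\varepsilon$. Because $\nabla_y v=\varepsilon\,\nabla_x u^\varepsilon$, the gradient term on $Y^*$ acquires the factor $\varepsilon^p$, and multiplication by $\varepsilon^n$ (the weight that produces the surface prefactor $\varepsilon$, since $\de\sigma_x=\varepsilon^{n-1}\de\sigma_y$) leaves the free parameter $\delta$ multiplying exactly $\varepsilon^p\|\nabla u^\varepsilon\|^p$, with no adverse negative power of $\varepsilon$ surviving. In particular, the arbitrarily small $\delta$ must be introduced already on the reference cell, before summation: one cannot extract \eqref{trace3} from \eqref{trace2}, where the single fixed constant $C$ multiplies both terms and cannot be forced below a prescribed $\delta$. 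All constants originate from the fixed geometry $Y^*$ (or $\Omega$) and are therefore independent of $\varepsilon$, which is precisely the uniformity the subsequent a priori estimates will require.
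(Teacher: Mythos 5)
The paper never proves this lemma: it is imported verbatim from \cite{Gahn} (Lemma 5 there), so there is no internal argument to compare yours against. What you wrote is essentially the standard proof behind that citation, and it is structurally correct: part (i) by the divergence theorem applied to $u^2V$ for a transversal field $V$ with $V\cdot\nu\ge 1$ plus Young's inequality, and parts (ii)--(iii) by proving the trace inequality once on the reference cell $Y^*$ and rescaling cell by cell via $x=\varepsilon(\mathbf{k}+y)$, with the bookkeeping $\mathrm{d}\sigma_x=\varepsilon^{n-1}\mathrm{d}\sigma_y$, $\mathrm{d}x=\varepsilon^{n}\mathrm{d}y$, $\nabla_y v=\varepsilon\nabla_x u^\varepsilon$, followed by summation over $K_\varepsilon$ (the cells $\varepsilon Y_{\mathbf{k}}$ are pairwise disjoint subsets of $\Omega^{\ast}_{\varepsilon}$ and their inner boundaries exhaust $\Gamma^\varepsilon$, so the summation step is legitimate). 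Your closing remark --- that the arbitrarily small $\delta$ must be injected at the level of the fixed cell, before rescaling, because \eqref{trace3} cannot be deduced from \eqref{trace2} by shrinking a fixed constant --- is exactly the right point, and it is what makes the constants $\varepsilon$-independent, which is the only feature the paper actually uses (in Lemma \ref{lemest} and Theorem \ref{teoexun}). Note also that the statement's hypothesis ``$u_\varepsilon\in L^p(\Omega^{\ast}_{\varepsilon})$'' is a typo for $W^{1,p}(\Omega^{\ast}_{\varepsilon})$, which you implicitly and correctly assume.

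One genuine caveat: your $\delta$-refined cell inequality requires $p>1$. The divergence-theorem identity for $|v|^pV$ produces the term $p\int_{Y^*}|v|^{p-1}|\nabla v|\,|V|\,\mathrm{d}y$, and Young's inequality with exponents $\bigl(\tfrac{p}{p-1},p\bigr)$ splits it into $C_\delta|v|^p+\delta|\nabla v|^p$ only when $p>1$; for $p=1$ that term is $|\nabla v|$ itself and cannot be given an arbitrarily small coefficient. This is not a repairable gap in the argument but a defect of the statement: for $p=1$, part (iii) is false. Indeed, taking $v_k(y)=\max\bigl(0,1-k\,\mathrm{dist}(y,\Gamma)\bigr)$ on $Y^*$ gives $\Vert v_k\Vert_{L^1(\Gamma)}=|\Gamma|$ while $\Vert v_k\Vert_{L^1(Y^*)}\rightarrow 0$ and $\Vert \nabla v_k\Vert_{L^1(Y^*)}\rightarrow|\Gamma|$, which contradicts the cell version of \eqref{trace3} as soon as $\delta<1$. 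So you should state your refined cell inequality, and hence part (iii), for $p\in(1,+\infty)$ only; since the paper invokes the lemma exclusively with $p=2$, nothing downstream is affected.
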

\bigskip
For giving the definition of a weak solution of the system \eqref{eqmicro}, following \cite{Gahn}, for any Banach space $V$, we denote its dual by $V'$ and we introduce the space
\[
\mathcal{W}\left(0,T;V,V'\right):=\{u\in L^2\left(0,T;V\right):\partial_t u\in L^2\left(0,T;V'\right)\},
\]
where the time derivative $\partial_t u$ is understood in the distributional sense. It is a Banach space if it is endowed with the norm of the graph
\[
\|u\|_{\mathcal{W}}:=\|u\|_{L^2(0,T;V)} + \|\partial_t u\|_{L^2(0,T;V')}.
\]
We denote
\[
H_{\partial \Omega}^{1}(\Omega^{\ast}_{\varepsilon})= \{ \, v\in H^{1}(\Omega^{\ast}_{\varepsilon} )
\, \mid \, v=0 \textrm{ on } \partial \Omega \}.
\]
\begin{definition}
We say that $(c_{1}^{\varepsilon},\,c_{2}^{\varepsilon}, \, c_{3}^{\varepsilon})\in \mathcal{W}\left(0,T;H_{\partial \Omega}^{1}(\Omega^{\ast}_{\varepsilon}),(H_{\partial \Omega}^{1}(\Omega^{\ast}_{\varepsilon}))'\right)^3$
is a weak solution of  problem \eqref{eqmicro} if for any $v\in H_{\partial \Omega}^{1}(\Omega^{\ast}_{\varepsilon})$ and for a.e. $t\in(0,T)$ it holds that, for $i\in \{1,2\}$,
\begin{equation}\label{weakconcentration}
\begin{array}{l}
\displaystyle\langle\partial_{t}c_{i}^{\varepsilon},v\rangle_{\Omega^{\ast}_{\varepsilon}}+\int_{\Omega^{\ast}_{\varepsilon}}D_i^\varepsilon \nabla c_{i}^{\varepsilon}\cdot\nabla v\mathrm{d}x  =\int_{\Omega^{\ast}
_{\varepsilon}}F_{i}^{\varepsilon}(x,c_{1}^{\varepsilon},c_{2}^{\varepsilon
},c_{3}^{\varepsilon})v\,\mathrm{d}x+\displaystyle \frac{1}{\varepsilon}\int_{\Gamma^{\varepsilon}}G_{i}(c_{1}%
^{\varepsilon},c_{2}^{\varepsilon},c_{3}^{\varepsilon})v\,\mathrm{d}\sigma
_{x}\\
\\
\displaystyle\langle\partial_{t}c_{3}^{\varepsilon},v\rangle_{\Omega^{\ast}_{\varepsilon}}+\int_{\Omega^{\ast}_{\varepsilon}}D_3^\varepsilon \nabla c_{3}^{\varepsilon}\cdot\nabla v\mathrm{d}x  =\int_{\Omega^{\ast}
_{\varepsilon}}F_{3}^{\varepsilon}(x,c_{1}^{\varepsilon},c_{2}^{\varepsilon
},c_{3}^{\varepsilon})v\,\mathrm{d}x + \displaystyle \varepsilon\int_{\Gamma^{\varepsilon}}G_{3}^{\varepsilon}(x,c_{1}%
^{\varepsilon},c_{2}^{\varepsilon},c_{3}^{\varepsilon})v\,\mathrm{d}\sigma
_{x},
\end{array}
\end{equation}
together with the initial conditions
\begin{equation}\label{weakinitial}
c_{i}^{\varepsilon}(0)=c_{i}^0\quad\text{ in }\Omega^{\ast}_{\varepsilon}, \quad i\in \{1,2,3\}.
\end{equation}
\end{definition}
Here, we denoted by $\langle\cdot,\cdot\rangle_{\Omega^{\ast}_{\varepsilon}}$ the duality pairing $\langle\cdot,\cdot\rangle_{H_{\partial \Omega}^1(\Omega^{\ast}_{\varepsilon}),(H_{\partial \Omega}^1(\Omega^{\ast}_{\varepsilon}))'}$ of $H_{\partial \Omega}^1(\Omega^{\ast}_{\varepsilon})$ with its dual space $(H_{\partial \Omega}^1(\Omega^{\ast}_{\varepsilon}))'$.
\begin{theorem}\label{teoexun} There exists a unique weak solution $(c_{1}^{\varepsilon},\,c_{2}^{\varepsilon}, \,c_{3}^{\varepsilon})$ of problem \eqref{eqmicro}.
\end{theorem}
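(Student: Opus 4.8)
The plan is to handle the non-globally-Lipschitz nonlinearities by a truncation, to prove existence for the truncated problem by a Galerkin scheme, and then to recover a solution of the original system through a priori $L^\infty$-bounds that render the truncation inactive. Since $\varepsilon>0$ is fixed here, the factors $\varepsilon^{-1}$ and $\varepsilon$ in front of the surface terms are merely constants and do not affect the well-posedness argument (they will matter only for the $\varepsilon$-uniform estimates of the next section). As a first step I would regularize: by $(\mathbf{H}_2)_3$ and $(\mathbf{H}_4)_3$ the functions $F_i$ and $G_3$ are globally Lipschitz, but the surface nonlinearity $G_i(s)=(-1)^i(s_1-s_2)H(s_3)$ of $(\mathbf{H}_3)$ is only locally Lipschitz, its $s_3$-derivative $(-1)^i(s_1-s_2)H'(s_3)$ growing with $|s_1-s_2|$. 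Following the comment after Example 3, I would replace the arguments by truncations $T_M(s_j)=\max\{-M,\min\{M,s_j\}\}$ (and the variables in the denominators by their moduli), obtaining modified rates that are bounded and globally Lipschitz; the level $M$ is fixed only at the very end, larger than the a priori bound.

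For existence for the truncated system I would use a Galerkin method: project the weak formulation \eqref{weakconcentration} onto finite-dimensional subspaces of $H^1_{\partial\Omega}(\Omega^{\ast}_{\varepsilon})$, so that the resulting coupled ODE system has a globally Lipschitz right-hand side and hence a unique global solution. Testing with the approximate solutions themselves and using $(\mathbf{H}_1)$ for coercivity, the growth bounds \eqref{F2} and \eqref{assG1}, and Lemma \ref{lemtrace}(iii) (with $\delta$ small, to absorb the $\Gamma^\varepsilon$-integrals into the diffusion term), Gronwall's lemma yields a bound, uniform in the discretization, in $L^\infty(0,T;L^2(\Omega^{\ast}_\varepsilon))\cap L^2(0,T;H^1_{\partial\Omega}(\Omega^{\ast}_\varepsilon))$; a comparison in the equation then bounds $\partial_t c_i^\varepsilon$ in $L^2(0,T;(H^1_{\partial\Omega}(\Omega^{\ast}_\varepsilon))')$. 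By Aubin--Lions the approximations converge strongly in $L^2(0,T;L^2(\Omega^{\ast}_\varepsilon))$ and, via the compact embedding into $L^2(0,T;H^s(\Omega^{\ast}_\varepsilon))$ with $1/2<s<1$, strongly on the boundary in $L^2((0,T)\times\Gamma^\varepsilon)$; this strong trace convergence, together with the continuity and boundedness of the truncated nonlinearities, lets me pass to the limit in all the volume and surface terms and obtain a weak solution of the truncated problem.

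Next I would remove the truncation by $L^\infty$-bounds. Testing equation $i$ with the negative part $(c_i^\varepsilon)_-$ and summing, the volume contribution is controlled by $(\mathbf{H}_5)$ through \eqref{assF2}, the surface term for $i=3$ by \eqref{assG}, and the surface terms for $i\in\{1,2\}$ combine into $\tfrac{1}{\varepsilon}\int_{\Gamma^\varepsilon}(c_1^\varepsilon-c_2^\varepsilon)H(c_3^\varepsilon)\big((c_2^\varepsilon)_--(c_1^\varepsilon)_-\big)\de\sigma_x\le 0$, a sign one verifies directly by distinguishing the signs of $c_1^\varepsilon$ and $c_2^\varepsilon$ and using $H\ge 0$ from $(\mathbf{H}_3)_1$. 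With $(c_i^0)_-=0$ from $(\mathbf{H}_7)$, Gronwall forces $(c_i^\varepsilon)_-=0$, i.e. $c_i^\varepsilon\ge 0$. For the upper bound I would test with $(c_i^\varepsilon-\Lambda e^{At})_+$ and invoke $(\mathbf{H}_6)$ together with \eqref{binitial} to get $c_i^\varepsilon\le\Lambda e^{AT}$ on $[0,T]$. Choosing $M>\Lambda e^{AT}$ makes the truncation inactive, so the constructed function solves \eqref{eqmicro}. Uniqueness then follows because two solutions both lie in the invariant box $[0,\Lambda e^{AT}]^3$, where the original nonlinearities are Lipschitz: subtracting the two weak formulations, testing with the difference and absorbing the $\Gamma^\varepsilon$-integrals by Lemma \ref{lemtrace}(iii) gives a Gronwall inequality with vanishing initial datum.

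I expect the principal difficulties to be, on the one hand, passing to the limit in the $\varepsilon^{-1}$-scaled surface term, which requires precisely the strong trace convergence furnished by the fractional Aubin--Lions compactness above, and, on the other hand, the merely locally Lipschitz character of $G_i$, which forces the truncation and whose sign in the positivity step must be extracted through the case analysis just described rather than from a single structural inequality such as $(\mathbf{H}_5)$.
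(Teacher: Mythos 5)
Your proposal is essentially sound but follows a genuinely different route from the paper. The paper proves existence by Schaefer's fixed point theorem: for $\widehat{c}\in X=L^2\bigl(0,T;H^{\beta}_{\partial\Omega}(\Omega^{\ast}_{\varepsilon})\bigr)^3$, $\beta\in(\tfrac12,1)$, one solves the \emph{linear} problem obtained by freezing the arguments of $F_i^\varepsilon$, $G_i$, $G_3^\varepsilon$ at $\widehat{c}$, and shows that the solution operator $\mathcal{F}$ is continuous and compact (via the compact embedding of $\mathcal{W}\left(0,T;H^1_{\partial\Omega},(H^1_{\partial\Omega})'\right)^3$ into $X$ and estimates as in Lemma \ref{lemest}), with the set $\{c=\lambda\mathcal{F}(c)\}$ bounded; uniqueness then comes from Lipschitz continuity plus Gronwall. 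This route needs no truncation at all, because with frozen arguments only the growth bounds \eqref{F2}, \eqref{assG1} and the linear growth of $G_i$ matter; the merely local Lipschitz character of $G_i$ enters only in uniqueness, where (as in your argument, though the paper leaves this implicit) the $L^\infty$ bounds of Theorems \ref{teopos} and \ref{teobound} are what make Gronwall applicable. Your truncation-plus-Galerkin scheme is a legitimate, more constructive alternative; its price is that the positivity/boundedness arguments must be run for the \emph{truncated} system before de-truncation, so you must check that the sign structure survives truncation. It does, but only because $T_M$ is monotone: $(T_M(a)-T_M(b))\bigl(b_- - a_-\bigr)\le 0$ and likewise for the positive parts, so your case analysis goes through for $G_i^M$; the inequality you display is written for the untruncated $G_i$ and should be stated for $G_i^M$. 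Also, testing with $(c_i^\varepsilon)_-$ is not directly legitimate ($\partial_t c_i^\varepsilon$ lies only in $L^2(0,T;(H^1_{\partial\Omega})')$), so a Steklov-average regularization as in Theorem \ref{teopos} is needed.

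The one step you genuinely overclaim is the upper bound for $c_3^\varepsilon$. Testing with $e^{-tA}(c_3^\varepsilon e^{-tA}-\Lambda)_+$ and using $(\mathbf{H}_6)$ cancels the volume terms, but the boundary term survives in the form
\begin{equation*}
\varepsilon A\,\bigl\Vert (c_3^\varepsilon e^{-tA}-\Lambda)_+\bigr\Vert^2_{L^2(\Gamma^\varepsilon)}
+\varepsilon A\Lambda\int_{\Gamma^\varepsilon}(c_3^\varepsilon e^{-tA}-\Lambda)_+\,\mathrm{d}\sigma_x .
\end{equation*}
The quadratic piece is absorbed by Lemma \ref{lemtrace}(iii), but the \emph{linear} piece cannot be dominated by a multiple of $\Vert(c_3^\varepsilon e^{-tA}-\Lambda)_+\Vert^2_{L^2(\Omega^{\ast}_{\varepsilon})}$ plus an absorbable gradient term: any Young-type splitting leaves an additive constant, after which Gronwall no longer forces the positive part to vanish. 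This is exactly why the paper, in Theorem \ref{teobound}, does not close the $c_3^\varepsilon$ estimate by Gronwall but invokes \cite[Proposition 3.38]{Gahn3}, i.e.\ a Stampacchia/De Giorgi-type level-set iteration. You can repair your plan in either of two ways: import that iteration result, as the paper does; or, better, truncate only where it is needed, namely set $G_i^M(s)=(-1)^i\bigl(T_M(s_1)-T_M(s_2)\bigr)H(s_3)$ and leave $H$ untouched (it is already bounded with bounded derivative by $(\mathbf{H}_3)$). With this sharper truncation, de-truncation requires only $\Vert c_1^\varepsilon\Vert_\infty,\Vert c_2^\varepsilon\Vert_\infty\le M$, and uniqueness requires only the essential boundedness of $c_1^\varepsilon$ and $c_2^\varepsilon$ (to control $(\bar c_1-\bar c_2)\bigl(H(c_3)-H(\bar c_3)\bigr)$), both of which do follow from the clean cancellation of the $\Gamma^\varepsilon$-terms for $i\in\{1,2\}$ that you describe; the delicate $L^\infty$ bound for $c_3^\varepsilon$ is then not needed for Theorem \ref{teoexun} at all.
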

\begin{proof}
We can argue in a similar manner as in \cite{Gahn} (see, also \cite{Gahn3}, \cite{Gra-Pet}, and \cite{Graf}). Indeed, we can use Schaefer's fixed point theorem. To this end, following \cite{Gahn}, one can define the fixed point operator
${\cal F}: X \rightarrow X$, where $X =L^2\left(0,T;H^{\beta}_{\partial \Omega}(\Omega^{\ast}_{\varepsilon})\right)^3$, for $\beta\in (\frac{1}{2},1)$. For $\widehat{c}^\varepsilon=(\widehat{c}_{1}^\varepsilon,\,\widehat{c}_{2}^\varepsilon, \, \widehat{c}_{3}^\varepsilon)\in X$, let ${\cal F}(\widehat{c}^\varepsilon) =c^{\varepsilon}= (c_1^{\varepsilon}, \, c_2^{\varepsilon}, \, c_3^{\varepsilon})$, where $c^{\varepsilon}\in \mathcal{W}\left(0,T;H^{1}_{\partial \Omega}(\Omega^{\ast}_{\varepsilon}),(H^{1}_{\partial \Omega}(\Omega^{\ast}_{\varepsilon}))'\right)^3$ is the unique solution of the linearization of the system \eqref{eqmicro} (i.e. in \eqref{eqmicro} we replace, for $i\in \{1,2,3\}$, $F_i^\varepsilon (x, c_{1}^{\varepsilon},\,c_{2}^{\varepsilon}, \,c_{3}^{\varepsilon})$ by $F_i^\varepsilon (x, \widehat{c}_{1}^\varepsilon,\,\widehat{c}_{2}^\varepsilon, \, \widehat{c}_{3}^\varepsilon)$, $G_i(c_{1}^{\varepsilon},\,c_{2}^{\varepsilon}, \,c_{3}^{\varepsilon})$ by $G_i(\widehat{c}_{1}^\varepsilon,\,\widehat{c}_{2}^\varepsilon, \, \widehat{c}_{3}^\varepsilon)$, for $i\in \{1,2\}$, and $G_3^\varepsilon(x, c_{1}^{\varepsilon},\,c_{2}^{\varepsilon}, \,c_{3}^{\varepsilon})$  by  $G_3^\varepsilon (x, \widehat{c}_{1}^\varepsilon,\,\widehat{c}_{2}^\varepsilon, \, \widehat{c}_{3}^\varepsilon)$, respectively). Due to the fact that the embedding
\[
\mathcal{W}\left(0,T;H^{1}_{\partial \Omega}(\Omega^{\ast}_{\varepsilon}),(H^{1}_{\partial \Omega}(\Omega^{\ast}_{\varepsilon}))'\right)^3 \hookrightarrow X
 \]
is compact, using estimates similar to those in Lemma \ref{lemest} below, one gets that the operator ${\cal F}$ is continuous and compact and the set $\{c\in X \, \vert \, c=\lambda {\cal F}(c), \, \textrm{for some } \lambda \in [0,1]\}$ is bounded in $X$. Then, Schaefer's theorem gives the existence of a weak solution of problem \eqref{eqmicro}. As in \cite{Gahn}, the uniqueness follows from the Lipschitz continuity of the functions $F_i^\varepsilon$ and $G_i^\varepsilon$ and Gronwall's inequality.
\end{proof}
\bigskip
\\
In what follows, we prove the nonnegativity and the uniform boundedness from above of the concentration fields $c_i^\varepsilon$, for $i\in \{1,2,3\}$, which is a reasonable condition from the point of view of applications in biology. In order to show nonnegativity, we have to take into account the fact that, for $i\in \{1,2,3\}$, the generalized time derivative $\partial_t c_i^\varepsilon$ is only an element of $L^2(0,T;((H^1_{\partial \Omega}(\Omega^{\ast}_{\varepsilon}))')$ and we don't know if the time derivative of $(c_i^\varepsilon)_{-}$ exists or not. So, as in \cite{Gahn}, we regularize the solution $c_i^\varepsilon$ with the aid of Steklov average and we obtain an integral inequality for the concentrations $c_i^\varepsilon$.
\begin{theorem}\label{teopos} Let $(c_{1}^{\varepsilon},\,c_{2}^{\varepsilon}, \,c_{3}^{\varepsilon})$ be the weak solution of problem \eqref{eqmicro}. Then, for a.e. $t\in(0,T)$ and for $i\in \{1,2,3\}$, we have
\begin{equation}\label{pos1}
c_i^\varepsilon(\cdot,x)\geq 0\,\text{ a.e. in }\Omega^{\ast}_{\varepsilon}.
\end{equation}
\end{theorem}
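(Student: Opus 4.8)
The plan is to test the weak formulation with the negative parts of the concentrations and to close the argument with Gronwall's inequality. Write $u_i^\varepsilon := (c_i^\varepsilon)_-$ for $i\in\{1,2,3\}$. Since $c_i^\varepsilon \in H_{\partial\Omega}^1(\Omega^*_\varepsilon)$ and the truncation $(\cdot)_-$ preserves the vanishing trace on $\partial\Omega$, each $u_i^\varepsilon$ is an admissible test function in space, with $\nabla u_i^\varepsilon = \chi_{\{c_i^\varepsilon<0\}}\nabla c_i^\varepsilon$ and $\nabla c_i^\varepsilon\cdot\nabla u_i^\varepsilon = |\nabla u_i^\varepsilon|^2$. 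The first obstacle, and the reason for the Steklov regularization mentioned just before the statement, is that $\partial_t c_i^\varepsilon$ lies only in $L^2(0,T;(H_{\partial\Omega}^1(\Omega^*_\varepsilon))')$, so a priori one cannot differentiate $\|u_i^\varepsilon(t)\|^2_{L^2}$. I would therefore replace $c_i^\varepsilon$ by its Steklov average $[c_i^\varepsilon]_h(t)=\tfrac1h\int_t^{t+h}c_i^\varepsilon(\tau)\,\de\tau$, which is absolutely continuous in $t$, test the averaged equations with $([c_i^\varepsilon]_h)_-$, and pass to the limit $h\to0$. This justifies the chain rule identity $\langle\partial_t c_i^\varepsilon,u_i^\varepsilon\rangle_{\Omega^*_\varepsilon}=\tfrac12\,\tfrac{\de}{\de t}\|u_i^\varepsilon\|^2_{L^2(\Omega^*_\varepsilon)}$ for a.e. $t$.

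With this in hand, I would test equation $i$ with $u_i^\varepsilon$ and sum over $i\in\{1,2,3\}$. The diffusion terms are controlled from below using coercivity in $(\mathbf{H}_1)$: $\int_{\Omega^*_\varepsilon}D_i^\varepsilon\nabla c_i^\varepsilon\cdot\nabla u_i^\varepsilon=\int_{\Omega^*_\varepsilon}D_i^\varepsilon\nabla u_i^\varepsilon\cdot\nabla u_i^\varepsilon\ge\alpha\|\nabla u_i^\varepsilon\|^2_{L^2(\Omega^*_\varepsilon)}\ge0$. The volume reaction terms are handled by assumption $(\mathbf{H}_5)$: summing the pointwise bound \eqref{assF2} and integrating over $\Omega^*_\varepsilon$ gives $\int_{\Omega^*_\varepsilon}\sum_i F_i^\varepsilon u_i^\varepsilon\le C_1\sum_i\|u_i^\varepsilon\|^2_{L^2(\Omega^*_\varepsilon)}$. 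For the initial data, assumption $(\mathbf{H}_7)$ gives $c_i^0\ge0$, hence $u_i^\varepsilon(0)=(c_i^0)_-=0$, so the argument starts from zero energy.

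The boundary terms are the delicate part. For $i\in\{1,2\}$ the favorable sign comes from the special coupling $(\mathbf{H}_3)$. Using $G_i=(-1)^i(c_1^\varepsilon-c_2^\varepsilon)H(c_3^\varepsilon)$, the decomposition $c_j^\varepsilon=(c_j^\varepsilon)_++(c_j^\varepsilon)_-$, and $(c_j^\varepsilon)_+(c_j^\varepsilon)_-=0$, a short computation yields
\[
G_1 u_1^\varepsilon+G_2 u_2^\varepsilon
= H(c_3^\varepsilon)\Big[(c_1^\varepsilon)_+(c_2^\varepsilon)_-+(c_2^\varepsilon)_+(c_1^\varepsilon)_-
-\big((c_1^\varepsilon)_--(c_2^\varepsilon)_-\big)^2\Big]\le0,
\]
since $(c_1^\varepsilon)_+(c_2^\varepsilon)_-\le0$, $(c_2^\varepsilon)_+(c_1^\varepsilon)_-\le0$, the squared term is nonpositive, and $H\ge0$ by $(\mathbf{H}_3)_1$. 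Thus the fast (order $1/\varepsilon$) boundary contribution has the right sign and can simply be discarded. For $i=3$, I would use \eqref{assG} in $(\mathbf{H}_5)$ to get $\varepsilon\int_{\Gamma^\varepsilon}G_3^\varepsilon u_3^\varepsilon\,\de\sigma_x\le C_2\,\varepsilon\int_{\Gamma^\varepsilon}\sum_i|u_i^\varepsilon|^2\,\de\sigma_x$, and then invoke the trace estimate \eqref{trace3} of Lemma \ref{lemtrace}(iii) with $\delta<\alpha$; since $\varepsilon<1$, the gradient contribution $\delta\varepsilon^2\|\nabla u_i^\varepsilon\|^2$ is absorbed into the coercive diffusion term, leaving only a term of the form $C\sum_i\|u_i^\varepsilon\|^2_{L^2(\Omega^*_\varepsilon)}$.

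Collecting all bounds, the gradient terms drop out and one is left with $\tfrac{\de}{\de t}\sum_i\|u_i^\varepsilon\|^2_{L^2(\Omega^*_\varepsilon)}\le C\sum_i\|u_i^\varepsilon\|^2_{L^2(\Omega^*_\varepsilon)}$ for a.e. $t$. Since $\sum_i\|u_i^\varepsilon(0)\|^2_{L^2(\Omega^*_\varepsilon)}=0$, Gronwall's inequality forces $\sum_i\|u_i^\varepsilon(t)\|^2_{L^2(\Omega^*_\varepsilon)}=0$, i.e. $(c_i^\varepsilon)_-=0$ and hence \eqref{pos1}. The only genuinely nontrivial step is the rigorous justification, via the Steklov average, of the time-derivative chain rule applied to the non-smooth truncation $(c_i^\varepsilon)_-$; the remaining estimates are structural consequences of $(\mathbf{H}_1)$, $(\mathbf{H}_3)$, $(\mathbf{H}_5)$, $(\mathbf{H}_7)$ and the trace inequality.
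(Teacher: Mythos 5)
Your proposal is correct and follows essentially the same route as the paper's proof: Steklov regularization to justify the chain rule for the time derivative, testing with the negative parts $(c_i^\varepsilon)_-$, the favorable sign of the $1/\varepsilon$ boundary term coming from the coupling in $(\mathbf{H}_3)$, control of the volume and $G_3^\varepsilon$ terms via $(\mathbf{H}_5)$ together with the trace inequality of Lemma \ref{lemtrace}(iii), and Gronwall starting from zero initial energy. You even make explicit two steps the paper leaves implicit, namely the algebraic identity showing $G_1(c_1^\varepsilon)_-+G_2(c_2^\varepsilon)_-\le 0$ and the absorption of $\varepsilon\Vert (c_i^\varepsilon)_-\Vert^2_{L^2(\Gamma^\varepsilon)}$ into the coercive diffusion term.
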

\begin{proof}
We start by proving that, for $i\in \{1,2\}$, for a.e. $t\in(0,T)$ we have
\begin{equation}\label{pos3}
\begin{array}{l}
\dfrac{1}{2}\Vert(c_i^\varepsilon)_{-}(t)\Vert^2_{L^2(\Omega^{\ast}_{\varepsilon})}+\alpha\left\Vert \nabla(c_{i}%
^{\varepsilon})_{-}\right\Vert ^{2}_{L^2(0,t;L^2(\Omega^{\ast}_{\varepsilon}))} \\
\\
\leq \displaystyle\int_0^{t}\int_{\Omega^{\ast}_{\varepsilon}}F_{i}^{\varepsilon}(x,c_{1}^{\varepsilon},c_{2}^{\varepsilon
}, c_{3}^{\varepsilon})(c_{i}^{\varepsilon
})_{-}\mathrm{d}x\, \mathrm{d}s+ \displaystyle \frac{1}{\varepsilon} \int_0^{t}\int_{\Gamma^{\varepsilon}%
}G_{i}(c_{1}^{\varepsilon},c_{2}^{\varepsilon
}, c_{3}^{\varepsilon})(c_{i}^{\varepsilon})_{-}\mathrm{d}\sigma_{x}\,\mathrm{d}s,
\end{array}
\end{equation}
while for $i=3$ we have
\begin{equation}\label{pos3-3}
\begin{array}{l}
\dfrac{1}{2}\Vert(c_3^\varepsilon)_{-}(t)\Vert^2_{L^2(\Omega^{\ast}_{\varepsilon})}+\alpha\left\Vert \nabla(c_{3}%
^{\varepsilon})_{-}\right\Vert ^{2}_{L^2(0,t;L^2(\Omega^{\ast}_{\varepsilon}))} \\
\\
\leq \displaystyle\int_0^{t}\int_{\Omega^{\ast}_{\varepsilon}}F_{3}^{\varepsilon}(x,c_{1}^{\varepsilon},c_{2}^{\varepsilon
}, c_{3}^{\varepsilon})(c_{3}^{\varepsilon
})_{-}\mathrm{d}x\, \mathrm{d}s+ \displaystyle \varepsilon \int_0^{t}\int_{\Gamma^{\varepsilon}%
}G_{3}^{\varepsilon}(x,c_{1}^{\varepsilon},c_{2}^{\varepsilon
}, c_{3}^{\varepsilon})(c_{3}^{\varepsilon})_{-}\mathrm{d}\sigma_{x}\,\mathrm{d}s.
\end{array}
\end{equation}
From the first equation in \eqref{weakconcentration}, after integration
in time from $t$ to $t + h$, by integration by parts in the time derivative term and by multiplying the whole equation by $\dfrac{1}{h}$, we get for all $t, h\in (0,T)$ with $(t+h)\in (0,T)$ and for all $v\in H_{\partial \Omega}^1(\Omega^{\ast}_{\varepsilon})$
\begin{equation}\label{pos1*}
\begin{array}{l}
\dfrac{1}{h}\displaystyle\int_{\Omega^{\ast}_{\varepsilon}}\left(c_i^\varepsilon(t+h)-c_i^\varepsilon(t)\right)v\,\mathrm{d}x +\dfrac{1}{h}\int_t^{t+h}\int_{\Omega^{\ast}_{\varepsilon}} D_i^\varepsilon\nabla c_{i}^{\varepsilon} \cdot\nabla v\,\mathrm{d}x \,\mathrm{d}s\\
\\
=\displaystyle\dfrac{1}{h}\int_t^{t+h}\int_{\Omega^{\ast}_{\varepsilon}}F_{i}^{\varepsilon}(x,c_{1}^{\varepsilon},c_{2}^{\varepsilon
}, c_{3}^{\varepsilon})
v\,\mathrm{d}x\, \mathrm{d}s+\dfrac{1}{h \varepsilon}\int_t^{t+h}\int_{\Gamma^{\varepsilon}%
}G_{i}(c_{1}^{\varepsilon},c_{2}^{\varepsilon
}, c_{3}^{\varepsilon})v\,\mathrm{d}\sigma_{x}\,\mathrm{d}s,\,\,\,\,i\in \{1,2\}.
\end{array}
\end{equation}
Using the Steklov average, defined for a function $u\in L^2(0,T;L^2(D))$ with $D\subset \mathbb{R}^n$ open, $t, h\in (0, T )$ by
\begin{equation}\label{Stek}
[u]_h(t):=\left\{
\begin{array}{ll}
\dfrac{1}{h}\displaystyle\int_t^{t+h} u(s,\cdot)\,\mathrm{d}s & \text{for } t\in (0,T-h],\\
\\
0 & \text{for } t>T-h,
\end{array}
\right.
\end{equation}
we obtain
\begin{equation}\label{pos2}
\begin{array}{l}
\displaystyle\int_{\Omega^{\ast}_{\varepsilon}}\partial_t[c_i^\varepsilon]_{h} v \, \mathrm{d}x +\int_{\Omega^{\ast}_{\varepsilon}}D_i^\varepsilon \nabla [c_{i}^{\varepsilon}]_h
\cdot\nabla v\mathrm{d}x \\
\\
=\displaystyle\int_{\Omega^{\ast}_{\varepsilon}}[F_{i}^{\varepsilon}(x,c_{1}^{\varepsilon},c_{2}^{\varepsilon
},c_{3}^{\varepsilon})]_h
v\,\mathrm{d}x+\displaystyle \frac{1}{\varepsilon} \int_{\Gamma^{\varepsilon}%
}[G_{i}(c_{1}^{\varepsilon},c_{2}^{\varepsilon
}, c_{3}^{\varepsilon})]_h v\,\mathrm{d}\sigma_{x},\,\,\,i\in \{1,2\}.
\end{array}
\end{equation}
We have $\partial_t [c_i^\varepsilon]_h\in L^2(0,T; L^2(\Omega^{\ast}_{\varepsilon}))$ for all $0 < h < \delta$ and therefore $\partial_t ([c_i^\varepsilon]_h)_{-}\in L^2(0,T-\delta; L^2(\Omega^{\ast}_{\varepsilon}))$. For the function $[c_i^\varepsilon]_h$ it holds that
$$\langle\partial_t [c_i^\varepsilon]_h(t), ([c_i^\varepsilon]_h(t))_{-}\rangle_{\Omega^{\ast}_{\varepsilon}}=\dfrac{1}{2}\dfrac{d}{dt}\Vert([c_i^\varepsilon]_h)_{-}\Vert^2_{L^2(\Omega^{\ast}_{\varepsilon})}$$
and we obtain by integration from $0$ to $t \in (0, T )$ and testing the first equation in \eqref{weakconcentration} with $v=([c_i^\varepsilon
]_h)_{-}$, due to the definition of the cut off function $(c_i^\varepsilon)_{-}$ and assumption $(\mathbf{H}_1)_{2}$, for small $h$ and $i\in \{1,2\}$
\begin{equation}\label{pos4}
\begin{array}{l}
\dfrac{1}{2}\Vert([c_i^\varepsilon]_h)_{-}(t)\Vert^2_{L^2(\Omega^{\ast}_{\varepsilon})}-\dfrac{1}{2}\Vert([c_i^\varepsilon]_h)_{-}(0)\Vert^2_{L^2(\Omega^{\ast}_{\varepsilon})}+\alpha \left\Vert \nabla([c_{i}%
^{\varepsilon}]_h)_{-}\right\Vert ^{2}_{L^2(0,t;L^2(\Omega^{\ast}_{\varepsilon}))}\\\\
\leq\displaystyle\int_0^{t}\int_{\Omega^{\ast}_{\varepsilon}}[F_{i}^{\varepsilon}(x,c_{1}^{\varepsilon},c_{2}^{\varepsilon
}, c_{3}^{\varepsilon})]_h([c_{i}^{\varepsilon
}]_h)_{-}\mathrm{d}x\,\mathrm{d}s+\displaystyle \frac{1}{\varepsilon}\int_0^{t}\int_{\Gamma^{\varepsilon}%
}[G_{i}(c_{1}^{\varepsilon},c_{2}^{\varepsilon
}, c_{3}^{\varepsilon})]_h([c_{i}^{\varepsilon}]_h)_{-}\mathrm{d}\sigma_{x}\, \mathrm{d}s.
\end{array}
\end{equation}
Using the properties of the Steklov average, we can pass to the limit as $h\rightarrow 0$ in \eqref{pos4} and by assumption $(\mathbf{H}_7)$ we use that the initial concentration fields are nonnegative, i.e. ($c_i^\varepsilon (0))_{-}=0$, to obtain \eqref{pos3}. In a similar manner one can obtain \eqref{pos3-3}.

Summing up equations \eqref{pos3} for $i\in \{1,2\}$ and \eqref{pos3-3}, we get
\begin{equation}
\begin{array}{l}
\dfrac{1}{2}\left\Vert (c_{1}^{\varepsilon}%
)_{-}(t)\right\Vert ^{2}_{L^2(\Omega^{\ast}_{\varepsilon})}+ \dfrac{1}{2}\left\Vert (c_{2}^{\varepsilon}%
)_{-}(t)\right\Vert ^{2}_{L^2(\Omega^{\ast}_{\varepsilon})}+ \dfrac{1}{2}\left\Vert (c_{3}^{\varepsilon}%
)_{-}(t)\right\Vert ^{2}_{L^2(\Omega^{\ast}_{\varepsilon})} +\\
\\ \alpha\left(\left\Vert \nabla(c_{1}%
^{\varepsilon})_{-}\right\Vert ^{2}_{L^2(0,t;L^2(\Omega^{\ast}_{\varepsilon}))}+\left\Vert \nabla(c_{2}^{\varepsilon}%
)_{-}\right\Vert ^{2}_{L^2(0,t;L^2(\Omega^{\ast}_{\varepsilon}))}+\left\Vert \nabla(c_{3}^{\varepsilon}%
)_{-}\right\Vert ^{2}_{L^2(0,t;L^2(\Omega^{\ast}_{\varepsilon}))}\right)\\
\\
\leq \displaystyle\int_0^{t}\int_{\Omega^{\ast}_{\varepsilon}}\left[F_{1}^{\varepsilon}(x,c_{1}^{\varepsilon},c_{2}^{\varepsilon
}, c_{3}^{\varepsilon})(c_{1}^{\varepsilon})_{-}%
+F_{2}^{\varepsilon}(x,c_{1}^{\varepsilon},c_{2}^{\varepsilon
}, c_{3}^{\varepsilon})(c_{2}^{\varepsilon})_{-}+F_{3}^{\varepsilon}(x,c_{1}^{\varepsilon},c_{2}^{\varepsilon
}, c_{3}^{\varepsilon})(c_{3}^{\varepsilon})_{-}\right]\mathrm{d}x\, \mathrm{d}s\\
\\+
\displaystyle \frac{1}{\varepsilon}\int_0^{t}\int_{\Gamma^{\varepsilon}}\left[G_{1}(c_{1}^{\varepsilon},c_{2}^{\varepsilon
}, c_{3}^{\varepsilon})(c_{1}^{\varepsilon})_{-}%
+G_{2}(c_{1}^{\varepsilon},c_{2}^{\varepsilon
},c_{3}^{\varepsilon})(c_{2}^{\varepsilon})_{-}\right]\mathrm{d}\sigma_{x}\, \mathrm{d}s \\
\\ +
\displaystyle \varepsilon\int_0^{t}\int_{\Gamma^{\varepsilon}} G_{3}^{\varepsilon}(x,c_{1}^{\varepsilon},c_{2}^{\varepsilon
}, c_{3}^{\varepsilon})(c_{3}^{\varepsilon})_{-}\mathrm{d}\sigma_{x}\, \mathrm{d}s.
\end{array}
\end{equation}
By taking into account  \eqref{formG}, \eqref{assF2} and \eqref{assG}, we obtain
\begin{equation*}
\begin{array}{l}
\dfrac{1}%
{2}\left(  \left\Vert (c_{1}^{\varepsilon})_{-}(t)\right\Vert ^{2}_{L^2(\Omega^{\ast}_{\varepsilon})}+\left\Vert
(c_{2}^{\varepsilon})_{-}(t)\right\Vert ^{2}_{L^2(\Omega^{\ast}_{\varepsilon})}+\left\Vert (c_{3}^{\varepsilon})_{-}(t)\right\Vert ^{2}_{L^2(\Omega^{\ast}_{\varepsilon})}\right) \\
\\
+\alpha\left(\left\Vert \nabla(c_{1}%
^{\varepsilon})_{-}\right\Vert ^{2}_{L^2(0,t;L^2(\Omega^{\ast}_{\varepsilon}))}+\left\Vert \nabla(c_{2}^{\varepsilon}%
)_{-}\right\Vert ^{2}_{L^2(0,t;L^2(\Omega^{\ast}_{\varepsilon}))}+\left\Vert \nabla(c_{3}%
^{\varepsilon})_{-}\right\Vert ^{2}_{L^2(0,t;L^2(\Omega^{\ast}_{\varepsilon}))}\right)\\
\\
\leq C_1\displaystyle\int_0^t\left(\Vert (c_1^\varepsilon)_{-}(s)\Vert^2_{L^2(\Omega^{\ast}_{\varepsilon})}+\Vert (c_2^\varepsilon)_{-}(s)\Vert^2_{L^2(\Omega^{\ast}_{\varepsilon})} +\Vert (c_3^\varepsilon)_{-}(s)\Vert^2_{L^2(\Omega^{\ast}_{\varepsilon})}\right) \, \mathrm{d}s\\
\\
+ \displaystyle C_2 \, \varepsilon
\int_0^t\left(\Vert (c_1^\varepsilon)_{-}(s)\Vert^2_{L^2(\Gamma^\varepsilon)}+\Vert (c_2^\varepsilon)_{-}(s)\Vert^2_{L^2(\Gamma^\varepsilon)} +\Vert (c_3^\varepsilon)_{-}(s)\Vert^2_{L^2(\Gamma^\varepsilon)}\right) \, \mathrm{d}s\\
\\
+\displaystyle \frac{1}{\varepsilon}\int_0^{t}\int_{\Gamma^{\varepsilon}}\left(c_{1}^{\varepsilon}-c_{2}^{\varepsilon
}\right)H\left(c_3^\varepsilon\right)\left[(c_2^\varepsilon)_{-}-(c_1^\varepsilon)_{-}\right]\mathrm{d}\sigma_{x}\, \mathrm{d}s.
\end{array}
\end{equation*}
Since the function $H$ is positive, by the definition of $(c_i^\varepsilon)_-$, it is easy to show that
\begin{equation}\label{negG}\displaystyle \frac{1}{\varepsilon}\int_0^{t}\int_{\Gamma^{\varepsilon}}\left(c_{1}^{\varepsilon}-c_{2}^{\varepsilon
}\right)H\left(c_3^\varepsilon\right)\left[(c_2^\varepsilon)_{-}-(c_1^\varepsilon)_{-}\right]\mathrm{d}\sigma_{x}\, \mathrm{d}s\leq 0,
\end{equation}
and, hence, we get
\begin{equation*}
\begin{array}{l}
\dfrac{1}%
{2}\left(  \left\Vert (c_{1}^{\varepsilon})_{-}(t)\right\Vert ^{2}_{L^2(\Omega^{\ast}_{\varepsilon})}+\left\Vert
(c_{2}^{\varepsilon})_{-}(t)\right\Vert ^{2}_{L^2(\Omega^{\ast}_{\varepsilon})}+\left\Vert (c_{3}^{\varepsilon})_{-}(t)\right\Vert ^{2}_{L^2(\Omega^{\ast}_{\varepsilon})}\right) \\
\\
+\alpha\left(\left\Vert \nabla(c_{1}%
^{\varepsilon})_{-}\right\Vert ^{2}_{L^2(0,t;L^2(\Omega^{\ast}_{\varepsilon}))}+\left\Vert \nabla(c_{2}^{\varepsilon}%
)_{-}\right\Vert ^{2}_{L^2(0,t;L^2(\Omega^{\ast}_{\varepsilon}))}+\left\Vert \nabla(c_{3}%
^{\varepsilon})_{-}\right\Vert ^{2}_{L^2(0,t;L^2(\Omega^{\ast}_{\varepsilon}))}\right)\\
\\
\leq C_1\displaystyle\int_0^t\left(\Vert (c_1^\varepsilon)_{-}(s)\Vert^2_{L^2(\Omega^{\ast}_{\varepsilon})}+\Vert (c_2^\varepsilon)_{-}(s)\Vert^2_{L^2(\Omega^{\ast}_{\varepsilon})} +\Vert (c_3^\varepsilon)_{-}(s)\Vert^2_{L^2(\Omega^{\ast}_{\varepsilon})}\right) \, \mathrm{d}s\\
\\
+\displaystyle C_2 \, \varepsilon
\int_0^t\left(\Vert (c_1^\varepsilon)_{-}(s)\Vert^2_{L^2(\Gamma^\varepsilon)}+\Vert (c_2^\varepsilon)_{-}(s)\Vert^2_{L^2(\Gamma^\varepsilon)} +\Vert (c_3^\varepsilon)_{-}(s)\Vert^2_{L^2(\Gamma^\varepsilon)}\right) \, \mathrm{d}s.
\end{array}
\end{equation*}
Using Gronwall's inequality, we are led, as in \cite{Gahn}, to \eqref{pos1}.
\end{proof}

\bigskip

Following the same argument as in \cite{Gahn3}, we prove that $c_{i}^{\varepsilon},\, i\in \{1,2, 3\}$, are essentially bounded.
\begin{theorem}\label{teobound} Let $(c_{1}^{\varepsilon},\,c_{2}^{\varepsilon},\,c_{3}^{\varepsilon})$ be the weak solution of problem \eqref{eqmicro}. Then, for a.e. $t\in(0,T)$ and for $i\in \{1,2, 3\}$, there exists a constant $C>0$ independent of $\varepsilon$ such that
\begin{equation}\label{b1}
\left\Vert c_i^\varepsilon \right\Vert_{L^\infty((0,T)\times \Omega^{\ast}_{\varepsilon})} \leq C.
\end{equation}
\end{theorem}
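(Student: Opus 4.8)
The plan is to show that the concentrations, already known to be nonnegative by Theorem \ref{teopos}, stay below the explicit comparison barrier $M(t):=\Lambda e^{\lambda t}$, where $\Lambda$ is the bound on the initial data from $(\mathbf{H}_7)$ and $\lambda>0$ is a rate to be fixed in terms of the constant $A$ of $(\mathbf{H}_6)$ and the trace constants of Lemma \ref{lemtrace}; since $t$ ranges over the finite interval $[0,T]$, this yields the $\varepsilon$-independent bound $C:=\Lambda e^{\lambda T}$ in \eqref{b1}. For each $i$ I would test the weak formulation \eqref{weakconcentration} with $v=(c_i^\varepsilon-M(t))_+$. Because $\partial_t c_i^\varepsilon$ lives only in the dual space, I would first regularize in time with the Steklov average \eqref{Stek}, exactly as in the proof of Theorem \ref{teopos}, and pass to the limit $h\to 0$ at the end. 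The initial contribution drops out since $(c_i^0-M(0))_+=(c_i^0-\Lambda)_+=0$ by $(\mathbf{H}_7)$, and the ellipticity assumption $(\mathbf{H}_1)_2$ produces the good term $\alpha\Vert\nabla(c_i^\varepsilon-M)_+\Vert^2_{L^2(0,t;L^2(\Omega^{\ast}_\varepsilon))}$ on the left.

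The bulk reaction is handled by the growth control $(\mathbf{H}_6)$: on the set $\{c_i^\varepsilon>M\}\subset\{c_i^\varepsilon>\Lambda\}$ one has $F_i^\varepsilon\le A c_i^\varepsilon=A(c_i^\varepsilon-M)_+ + AM$, so that, after integration, the genuinely dangerous linear-in-$M$ production $\displaystyle AM\int_{\Omega^{\ast}_\varepsilon}(c_i^\varepsilon-M)_+\,\mathrm dx$ is dominated by the term $\displaystyle\int_{\Omega^{\ast}_\varepsilon} M'(t)(c_i^\varepsilon-M)_+\,\mathrm dx$ coming from differentiating the barrier, provided $\lambda$ is chosen large enough; what survives on the right is the Gronwall-type quantity $A\Vert(c_i^\varepsilon-M)_+\Vert^2_{L^2(\Omega^{\ast}_\varepsilon)}$.

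The heart of the matter is the flux terms, whose scalings behave oppositely. For the fast ($1/\varepsilon$) fluxes of $c_1^\varepsilon,c_2^\varepsilon$ I would not estimate each equation separately, since the factor $1/\varepsilon$ makes that hopeless, but rather add the $i=1$ and $i=2$ inequalities. Writing $a=c_1^\varepsilon-M$, $b=c_2^\varepsilon-M$ and using \eqref{formG}, the combined boundary integrand becomes $(a-b)\,H(c_3^\varepsilon)\,(b_+-a_+)$, which is $\le 0$ because $H\ge 0$ by $(\mathbf{H}_3)_1$ and $s\mapsto s_+$ is nondecreasing; this is the same sign mechanism already exploited in \eqref{negG}, and it simply deletes the fast flux from the right-hand side. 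Summing the two inequalities and applying Gronwall then forces $(c_1^\varepsilon-M)_+=(c_2^\varepsilon-M)_+=0$, i.e. the asserted bound for $c_1^\varepsilon$ and $c_2^\varepsilon$.

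For $c_3^\varepsilon$ the flux is slow ($\varepsilon$-scaled): using $(\mathbf{H}_6)$ again, $\varepsilon\int_{\Gamma^\varepsilon}G_3^\varepsilon(c_3^\varepsilon-M)_+\,\mathrm d\sigma\le A\varepsilon\int_{\Gamma^\varepsilon}c_3^\varepsilon(c_3^\varepsilon-M)_+\,\mathrm d\sigma$, and its quadratic part $\varepsilon\int_{\Gamma^\varepsilon}(c_3^\varepsilon-M)_+^2\,\mathrm d\sigma$ is absorbed into the bulk $L^2$-norm plus a small multiple of the gradient through the scaled trace inequality Lemma \ref{lemtrace}(iii), uniformly in $\varepsilon$, while the linear part is transferred to the bulk by the same inequality and reabsorbed into the barrier growth. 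A final Gronwall argument gives $(c_3^\varepsilon-M)_+=0$. I expect this last, slow-flux term to be the main obstacle: unlike the fast fluxes it carries no favorable sign, so the whole estimate hinges on the $\varepsilon$-uniform trace bound of Lemma \ref{lemtrace}(iii) and on choosing $\lambda$ so that the surface production is controlled without leaving a spurious $\varepsilon$-dependent constant on the right-hand side; should a single barrier prove insufficient, one iterates the estimate over an increasing sequence of truncation levels, as in \cite{Gahn3}.
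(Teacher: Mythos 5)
Your treatment of $c_1^\varepsilon$ and $c_2^\varepsilon$ coincides with the paper's: the paper tests with $e^{-tA}\left(c_i^\varepsilon e^{-tA}-\Lambda\right)_+$, which is your barrier test function up to the positive weight $e^{-2tA}$ and the choice $\lambda=A$ (with the small bonus that this exponential weight makes the $(\mathbf{H}_6)$ bulk terms cancel exactly against the time-derivative identity, so no Gronwall step is even needed there); it then sums the $i=1$ and $i=2$ inequalities and discards the $1/\varepsilon$-flux by exactly your sign argument, i.e. the mechanism of \eqref{negG}. So that half of your proposal is correct and follows the paper's route.

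The gap is in the slow-flux equation, at the very point you flagged. After $(\mathbf{H}_6)$, the boundary contribution splits into the quadratic piece $A\varepsilon\Vert (c_3^\varepsilon-M)_+\Vert^2_{L^2(\Gamma^\varepsilon)}$, which Lemma \ref{lemtrace}(iii) absorbs uniformly in $\varepsilon$ as you say, and the \emph{linear} piece $AM(t)\,\varepsilon\int_{\Gamma^\varepsilon}(c_3^\varepsilon-M)_+\,\mathrm{d}\sigma_x$. Transferring the latter to the bulk with Lemma \ref{lemtrace}(iii) for $p=1$ produces
\[
AMC_\delta\int_{\Omega^\ast_\varepsilon}(c_3^\varepsilon-M)_+\,\mathrm{d}x \;+\; AM\delta\,\varepsilon\int_{\Omega^\ast_\varepsilon}\left|\nabla (c_3^\varepsilon-M)_+\right|\,\mathrm{d}x .
\]
The first term is indeed eaten by the barrier growth $\lambda M\int(c_3^\varepsilon-M)_+$, but the second is an $L^1$ norm of the gradient: by Cauchy--Schwarz and Young it is bounded by $\frac{\alpha}{4}\Vert\nabla(c_3^\varepsilon-M)_+\Vert^2_{L^2(\Omega^\ast_\varepsilon)}+C\,\vert E(t)\vert$, where $E(t)=\{x\in\Omega^\ast_\varepsilon:\,c_3^\varepsilon(t,x)>M(t)\}$. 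The level-set measure $\vert E(t)\vert$ is \emph{not} dominated by $\Vert(c_3^\varepsilon-M)_+(t)\Vert^2_{L^2(\Omega^\ast_\varepsilon)}$ (Chebyshev runs the other way: a tiny excess on a large set is possible), so Gronwall only yields $\Vert(c_3^\varepsilon-M)_+(t)\Vert^2\le C e^{Ct}\int_0^t\vert E(s)\vert\,\mathrm{d}s$, not $(c_3^\varepsilon-M)_+\equiv 0$; estimating the linear term through the $L^2$ trace inequality instead fares no better, since it leaves a differential inequality of the type $y'\le Cy+C'\sqrt{y}$, which does not force $y\equiv 0$ from $y(0)=0$. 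This is the standard symptom that a genuinely linear lower-order term in a level-set energy estimate cannot be closed with a single truncation level. Consequently, your fallback --- iterating over an increasing sequence of truncation levels as in \cite{Gahn3} --- is not a contingency but the necessary argument, and it is precisely what the paper does: it derives exactly the differential inequality above (quadratic plus linear surface terms) and then concludes by invoking \cite[Proposition 3.38]{Gahn3}, where that iteration is carried out.
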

\begin{proof}
Set $\left( \cdot\right)_+  =\max\left\{\cdot,0\right\}$. Let $A$ and $\Lambda$ be as in assumptions $(\mathbf{H}_6)$ and $(\mathbf{H}_7)$  and let us consider, for $i\in \{1,2, 3\}$, $v=e^{-tA}(c_i^\varepsilon \, e^{-tA}-\Lambda)_+$ as test function in \eqref{weakconcentration}. Then, we obtain, for $i\in \{1,2\}$,
\begin{equation*}
\begin{array}{l}
\langle\partial_{t}c_{i}^{\varepsilon}, e^{-tA}(c_i^\varepsilon \, e^{-tA}-\Lambda)_+\rangle_{\Omega^{\ast}_{\varepsilon}}+\displaystyle\int_{\Omega^{\ast}_{\varepsilon}} D_i^\varepsilon \nabla c_{i}^{\varepsilon} \cdot\nabla e^{-tA}(c_i^\varepsilon \, e^{-tA}-\Lambda)_+ \, \mathrm{d}x \\
\\
= \displaystyle \int_{\Omega^{\ast}_{\varepsilon}}F_{i}^{\varepsilon}(x,c_{1}^{\varepsilon},c_{2}^{\varepsilon
},c_{3}^{\varepsilon}) e^{-tA}(c_i^\varepsilon \, e^{-tA}-\Lambda)_+\,\mathrm{d}x +\displaystyle \frac{1}{\varepsilon} \int_{\Gamma^{\varepsilon}}G_{i}(c_{1}%
^{\varepsilon},c_{2}^{\varepsilon}, c_{3}^{\varepsilon}) e^{-tA}(c_i^\varepsilon \, e^{-tA}-\Lambda)_+\,\mathrm{d}\sigma
_{x}
\end{array}
\end{equation*}
and, for $i=3$,
\begin{equation*}
\begin{array}{l}
\langle\partial_{t}c_{3}^{\varepsilon}, e^{-tA}(c_3^\varepsilon \, e^{-tA}-\Lambda)_+\rangle_{\Omega^{\ast}_{\varepsilon}}+\displaystyle\int_{\Omega^{\ast}_{\varepsilon}} D_3^\varepsilon \nabla c_{3}^{\varepsilon} \cdot\nabla e^{-tA}(c_3^\varepsilon \, e^{-tA}-\Lambda)_+ \, \mathrm{d}x \\
\\= \displaystyle \int_{\Omega^{\ast}_{\varepsilon}}F_{3}^{\varepsilon}(x,c_{1}^{\varepsilon},c_{2}^{\varepsilon
},c_{3}^{\varepsilon}) e^{-tA}(c_{3}^\varepsilon \, e^{-tA}-\Lambda)_+\,\mathrm{d}x+\displaystyle \varepsilon \int_{\Gamma^{\varepsilon}}G_{3}^{\varepsilon}(x,c_{1}%
^{\varepsilon},c_{2}^{\varepsilon}, c_{3}^{\varepsilon}) e^{-tA}(c_3^\varepsilon \, e^{-tA}-\Lambda)_+\,\mathrm{d}\sigma
_{x}.
\end{array}
\end{equation*}
Using assumption $(\mathbf{H}_6)$  and \eqref{formG}, for $i\in \{1,2\}$, we have
\begin{equation} \label{est12}
\begin{array}{l}
\langle\partial_{t}c_{i}^{\varepsilon}, e^{-tA}(c_i^\varepsilon \, e^{-tA}-\Lambda)_+\rangle_{\Omega^{\ast}_{\varepsilon}}+\displaystyle\int_{\Omega^{\ast}_{\varepsilon}} D_i^\varepsilon \nabla c_{i}^{\varepsilon} \cdot\nabla e^{-tA}(c_i^\varepsilon \, e^{-tA}-\Lambda)_+ \, \mathrm{d}x \\
\\
\leq \displaystyle \int_{\Omega^{\ast}_{\varepsilon}} A e^{-tA} c_i^\varepsilon (c_i^\varepsilon \, e^{-tA}-\Lambda)_+\,\mathrm{d}x
+\displaystyle \frac{1}{\varepsilon} \int_{\Gamma^{\varepsilon}}(-1)^i e^{-tA}(c^\varepsilon_{1}-c^\varepsilon_{2})H\left(c_{3}\right)\left(c_i^\varepsilon e^{-tA} -\Lambda\right)_+\,\mathrm{d}\sigma_{x}
\end{array}
\end{equation}
and, for $i=3$,
\begin{equation} \label{est3}
\begin{array}{l}
\langle\partial_{t}c_{3}^{\varepsilon}, e^{-tA}(c_3^\varepsilon \, e^{-tA}-\Lambda)_+\rangle_{\Omega^{\ast}_{\varepsilon}}+\displaystyle\int_{\Omega^{\ast}_{\varepsilon}} D_3^\varepsilon \nabla c_{3}^{\varepsilon} \cdot\nabla e^{-tA}(c_3^\varepsilon \, e^{-tA}-\Lambda)_+ \, \mathrm{d}x \\
\\
\leq \displaystyle \int_{\Omega^{\ast}_{\varepsilon}} A e^{-tA} c_3^\varepsilon (c_3^\varepsilon \, e^{-tA}-\Lambda)_+\,\mathrm{d}x+\displaystyle \varepsilon \int_{\Gamma^{\varepsilon}} A e^{-tA} c_3^\varepsilon (c_3^\varepsilon \, e^{-tA}-\Lambda)_+\,\mathrm{d}\sigma
_{x}\\
\\= \displaystyle \int_{\Omega^{\ast}_{\varepsilon}} A e^{-tA} c_3^\varepsilon (c_3^\varepsilon \, e^{-tA}-\Lambda)_+\,\mathrm{d}x+ \displaystyle A \varepsilon  \left\Vert (c_3^\varepsilon \, e^{-tA}-\Lambda)_+\right \Vert ^{2}_{L^2(\Gamma^{\varepsilon})} \,\mathrm{d}\sigma
_{x}+ \displaystyle A \varepsilon  \int_{\Gamma^{\varepsilon}} \Lambda (c_3^\varepsilon \, e^{-tA}-\Lambda)_+\,\mathrm{d}\sigma
_{x}.
\end{array}
\end{equation}
On the other hand, for $i\in \{1,2, 3\}$, one has\\
\begin{equation}\label{der}
\begin{array}{l}
\langle\partial_{t}c_{i}^{\varepsilon}, e^{-tA}(c_i^\varepsilon \, e^{-tA}-\Lambda)_+\rangle_{\Omega^{\ast}_{\varepsilon}}\\
 \\=\langle\partial_{t}(c_i^\varepsilon \, e^{-tA}-\Lambda), (c_i^\varepsilon \, e^{-tA}-\Lambda)_+\rangle_{\Omega^{\ast}_{\varepsilon}} + \langle A e^{-tA}c_{i}^{\varepsilon}, (c_i^\varepsilon \, e^{-tA}-\Lambda)_+\rangle_{\Omega^{\ast}_{\varepsilon}}.
\end{array}
\end{equation}
Therefore, summing up \eqref{est12} for $i\in \{1,2\}$, by assumption $(\mathbf{H}_1)_{2}$ we obtain
\begin{equation*}
\begin{array}{l}
\displaystyle \frac{\mathrm{d}}{\mathrm{d}t}\left\Vert (c_1^\varepsilon \, e^{-tA}-\Lambda)_+ \right\Vert ^{2}_{L^2(\Omega^{\ast}_{\varepsilon})}+ \displaystyle \frac{\mathrm{d}}{\mathrm{d}t} \left\Vert (c_2^\varepsilon \, e^{-tA}-\Lambda)_+\right\Vert ^{2}_{L^2(\Omega^{\ast}_{\varepsilon})}\\
\\
+\alpha\left(\left\Vert \nabla (c_1^\varepsilon \, e^{-tA}-\Lambda)_+ \right\Vert ^{2}_{L^2(\Omega^{\ast}_{\varepsilon})}
+\left\Vert \nabla (c_2^\varepsilon \, e^{-tA}-\Lambda)_+ \right\Vert ^{2}_{L^2(\Omega^{\ast}_{\varepsilon})}\right)\\
\\
 \leq \displaystyle \frac{1}{\varepsilon} \int_{\Gamma^{\varepsilon}}\left[(c^\varepsilon_{1}e^{-tA}-\Lambda)-(c^\varepsilon_{2}e^{-tA}-\Lambda)\right]H\left(c_{3}\right)\left[(c_2^\varepsilon e^{-tA} -\Lambda)_+ - (c_1^\varepsilon e^{-tA} -\Lambda)_+ \right] \,\mathrm{d}\sigma_{x}.
\end{array}
\end{equation*}
By arguing as to prove \eqref{negG}, we get that the second member of the previous inequality is non positive and then
\begin{equation*}
\displaystyle \frac{\mathrm{d}}{\mathrm{d}t}\left\Vert (c_1^\varepsilon \, e^{-tA}-\Lambda)_+ \right\Vert ^{2}_{L^2(\Omega^{\ast}_{\varepsilon})}+ \displaystyle \frac{\mathrm{d}}{\mathrm{d}t} \left\Vert (c_2^\varepsilon \, e^{-tA}-\Lambda)_+\right\Vert ^{2}_{L^2(\Omega^{\ast}_{\varepsilon})}\leq 0.
\end{equation*}
Integrating in  time and taking into account that the initial data are bounded, we get the essential boundedness of $c_1^\varepsilon$ and $c_2^\varepsilon$.

From \eqref{est3}, taking into account \eqref{der} and assumption $(\mathbf{H}_1)_{2}$, we get
\begin{equation*}
\begin{array}{l}
\displaystyle \frac{\mathrm{d}}{\mathrm{d}t}\left\Vert (c_3^\varepsilon \, e^{-tA}-\Lambda)_+ \right\Vert ^{2}_{L^2(\Omega^{\ast}_{\varepsilon})}+\alpha \left\Vert \nabla (c_3^\varepsilon \, e^{-tA}-\Lambda)_+ \right\Vert ^{2}_{L^2(\Omega^{\ast}_{\varepsilon})} \\
\\ \leq C  \Big ( \displaystyle \varepsilon  \left\Vert (c_3^\varepsilon \, e^{-tA}-\Lambda)_+\right \Vert ^{2}_{L^2(\Gamma^{\varepsilon})} + \displaystyle \varepsilon  \int_{\Gamma^{\varepsilon}} \Lambda  (c_3^\varepsilon \, e^{-tA}-\Lambda)_+\,\mathrm{d}\sigma
_{x} \Big ).
\end{array}
\end{equation*}
Now, exactly as in \cite[Proposition 3.38]{Gahn3}, we are led to the essential boundedness of $c_3^\varepsilon$.
\end{proof}

\subsection{Estimates for the microscopic model}\label{secest}
Our goal is to obtain the effective behavior of the solutions of the microscopic system \eqref{eqmicro}. To this aim, we need to pass to the limit, with $\varepsilon \rightarrow 0$, in the variational formulation of the microscopic model, by using compactness results with respect to suitable topologies. Thus, we need to prove {\it a priori} estimates for our solution.
\begin{lemma}\label{lemest} For the solution $(c_{1}^{\varepsilon},\,c_{2}^{\varepsilon}, \,c_{3}^{\varepsilon})$ of problem \eqref{eqmicro}, there exists a positive constant $C>0$, independent of $\varepsilon$, such that the following estimates hold true:
\begin{equation}
\Vert c_i^{\varepsilon}\Vert_{L^{\infty}(0,T;L^{2}(\Omega^{\ast}_{\varepsilon}))}\leq C\,\,\,\,i\in \{1,2,3\},
\label{estimconcLinf}
\end{equation}
\begin{equation}
\Vert c_i^{\varepsilon}\Vert_{L^{2}(0,T;H^{1}(\Omega^{\ast}_{\varepsilon}))}\leq C\,\,\,\,i\in \{1,2,3\},
\label{estimconcH1}
\end{equation}
\begin{equation}
\Vert c_1^{\varepsilon}-c_2^{\varepsilon}\Vert_{L^{2}(0,T;L^{2}(\Gamma^{\varepsilon}))}\leq C \sqrt{\varepsilon},
\label{estimdifconc}
\end{equation}
\begin{equation}
\sqrt{\varepsilon}\Vert c_3^{\varepsilon}\Vert_{L^{2}(0,T;L^{2}(\Gamma^{\varepsilon}))}\leq C,
\label{estimconcb}
\end{equation}
\begin{equation}
\Vert \partial_t c_i^{\varepsilon}\Vert_{L^{2}(0,T;(H_0^{1}(\Omega^{\ast}_{\varepsilon}))')}\leq C\,\,\,\,i\in \{1,2\},
\label{estimconcH1'}
\end{equation}
\begin{equation}
\Vert \partial_t c_3^{\varepsilon}\Vert_{L^{2}(0,T;(H_{\partial \Omega}^{1}(\Omega^{\ast}_{\varepsilon}))')}\leq C.
\label{estimconc3H1'}
\end{equation}
\end{lemma}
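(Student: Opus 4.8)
The plan is to derive all six bounds from a single energy estimate obtained by using $c_i^\varepsilon$ itself as a test function in the weak formulation \eqref{weakconcentration}, combined with the trace inequalities of Lemma \ref{lemtrace} and Gronwall's lemma. First I would fix $t\in(0,T)$, test the $i$-th equation with $v=c_i^\varepsilon$, and integrate from $0$ to $t$. The time-derivative term yields $\frac12\Vert c_i^\varepsilon(t)\Vert^2_{L^2(\Omega^{\ast}_{\varepsilon})}-\frac12\Vert c_i^0\Vert^2_{L^2(\Omega^{\ast}_{\varepsilon})}$, the diffusion term is bounded below by $\alpha\Vert\nabla c_i^\varepsilon\Vert^2_{L^2(0,t;L^2(\Omega^{\ast}_{\varepsilon}))}$ by the coercivity in $(\mathbf{H}_1)_2$, and the volume reaction term is controlled by the sharp linear growth $|F_i^\varepsilon|\le C(|c_1^\varepsilon|+|c_2^\varepsilon|+|c_3^\varepsilon|)$ following from $(\mathbf{H}_2)$, so that $\int_0^t\int_{\Omega^{\ast}_{\varepsilon}}F_i^\varepsilon c_i^\varepsilon\le C\sum_j\Vert c_j^\varepsilon\Vert^2_{L^2(0,t;L^2(\Omega^{\ast}_{\varepsilon}))}$.

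The main obstacle is the boundary flux for $i\in\{1,2\}$, which carries the singular factor $\varepsilon^{-1}$ and a priori cannot be absorbed. The key is not to estimate the two terms separately but to add the equations for $i=1$ and $i=2$: using the explicit form \eqref{formG}, the sum of the two fluxes collapses to
\begin{equation*}
\frac{1}{\varepsilon}\int_{\Gamma^{\varepsilon}}\bigl[G_1(c_1^\varepsilon,c_2^\varepsilon,c_3^\varepsilon)\,c_1^\varepsilon+G_2(c_1^\varepsilon,c_2^\varepsilon,c_3^\varepsilon)\,c_2^\varepsilon\bigr]\,\mathrm{d}\sigma_x=-\frac{1}{\varepsilon}\int_{\Gamma^{\varepsilon}}(c_1^\varepsilon-c_2^\varepsilon)^2\,H(c_3^\varepsilon)\,\mathrm{d}\sigma_x\le 0,
\end{equation*}
since $H\ge 0$ by $(\mathbf{H}_3)_1$. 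Thus the dangerous term is in fact sign-definite: moving it to the left-hand side neutralizes the $\varepsilon^{-1}$ scaling and at the same time produces control of the weighted boundary integral $\varepsilon^{-1}\int_0^T\int_{\Gamma^{\varepsilon}}(c_1^\varepsilon-c_2^\varepsilon)^2 H(c_3^\varepsilon)$, from which the difference estimate \eqref{estimdifconc} follows. For the third equation the flux is scaled by $\varepsilon$, which is harmless: by the sharp growth of $G_3^\varepsilon$ and the trace inequality \eqref{trace3} with a small parameter $\delta$, one bounds $\varepsilon\int_{\Gamma^{\varepsilon}}G_3^\varepsilon c_3^\varepsilon$ by $C_\delta\Vert c_3^\varepsilon\Vert^2_{L^2(\Omega^{\ast}_{\varepsilon})}+\delta\varepsilon^2\Vert\nabla c_3^\varepsilon\Vert^2_{L^2(\Omega^{\ast}_{\varepsilon})}$, the gradient part being absorbed into the coercive term for $\delta$ small. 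Crucially, the constants in Lemma \ref{lemtrace} are independent of $\varepsilon$, which is exactly what guarantees uniformity of all the bounds.

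Summing the three tested equations and collecting terms, I would reach a differential inequality of the form $\frac{d}{dt}\sum_i\Vert c_i^\varepsilon\Vert^2_{L^2(\Omega^{\ast}_{\varepsilon})}+\alpha\sum_i\Vert\nabla c_i^\varepsilon\Vert^2_{L^2(\Omega^{\ast}_{\varepsilon})}\le C\sum_i\Vert c_i^\varepsilon\Vert^2_{L^2(\Omega^{\ast}_{\varepsilon})}$, with the initial data bounded uniformly by $(\mathbf{H}_7)$. Gronwall's inequality then yields \eqref{estimconcLinf} and, after integrating the gradient term back over $(0,T)$, \eqref{estimconcH1}; the bound \eqref{estimconcLinf} is in any case immediate from the uniform $L^\infty$ bound of Theorem \ref{teobound}. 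Estimate \eqref{estimconcb} follows from the trace inequality \eqref{trace2} applied to $c_3^\varepsilon$, integrated in time, together with \eqref{estimconcLinf}--\eqref{estimconcH1}, while \eqref{estimdifconc} is the weighted boundary integral already obtained.

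Finally, for the time-derivative bounds I would read $\partial_t c_i^\varepsilon$ off the equation by duality, estimating $\langle\partial_t c_i^\varepsilon,v\rangle_{\Omega^{\ast}_{\varepsilon}}$ for a generic test function $v$ of unit norm. The decisive point is the choice of dual space. For $i\in\{1,2\}$ the estimate \eqref{estimconcH1'} is posed in $(H_0^1(\Omega^{\ast}_{\varepsilon}))'$: such test functions vanish on the whole boundary $\partial\Omega^{\ast}_{\varepsilon}\supset\Gamma^{\varepsilon}$, so the singular flux terms drop out entirely, and only the diffusion and reaction terms remain, both bounded by $C\Vert v\Vert_{H^1(\Omega^{\ast}_{\varepsilon})}$ via \eqref{estimconcH1} and the growth of $F_i$; squaring and integrating in time gives \eqref{estimconcH1'}. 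For $i=3$ the estimate \eqref{estimconc3H1'} is posed in $(H_{\partial\Omega}^1(\Omega^{\ast}_{\varepsilon}))'$, so the $\Gamma^{\varepsilon}$-flux survives, but since it is scaled by $\varepsilon$ it is controlled through the trace inequality \eqref{trace2} and the uniform bounds of Theorem \ref{teobound} by $C\Vert v\Vert_{H^1(\Omega^{\ast}_{\varepsilon})}$, and \eqref{estimconc3H1'} follows. I expect the whole difficulty to be concentrated in the $\varepsilon^{-1}$ boundary flux of the first two equations, resolved precisely by the antisymmetric coupling structure of $G_1$ and $G_2$.
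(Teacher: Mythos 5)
Your proposal follows the paper's own proof essentially step by step: the same energy estimate obtained by testing with $c_i^\varepsilon$ and summing, the same observation that the $(-1)^i$ antisymmetry in \eqref{formG} collapses the $\varepsilon^{-1}$ boundary terms into the sign-definite quantity $-\varepsilon^{-1}\int_{\Gamma^\varepsilon}(c_1^\varepsilon-c_2^\varepsilon)^2H(c_3^\varepsilon)\,\mathrm{d}\sigma_x\le 0$, the same use of the linear growth of $F_i^\varepsilon$ and $G_3^\varepsilon$, of the $\varepsilon$-uniform trace inequality \eqref{trace3} to absorb the $\varepsilon$-scaled flux of the third equation, and of Gronwall's inequality for \eqref{estimconcLinf}--\eqref{estimconcH1}; the same use of \eqref{trace2} for \eqref{estimconcb}; and the same duality argument for the time derivatives, with the correct and explicitly stated reason why the dual space is $(H_0^1(\Omega^{\ast}_{\varepsilon}))'$ for $i\in\{1,2\}$ (test functions vanish on $\Gamma^\varepsilon$, killing the singular flux) but $(H_{\partial\Omega}^1(\Omega^{\ast}_{\varepsilon}))'$ for $i=3$. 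The only organizational difference --- you keep the sign-definite boundary term on the left-hand side in a single pass, while the paper drops it first and then re-tests the first two equations to recover it --- is immaterial.

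There is, however, one genuine gap. Your energy identity yields the \emph{weighted} bound
\begin{equation*}
\frac{1}{\varepsilon}\int_0^T\int_{\Gamma^{\varepsilon}}(c_1^\varepsilon-c_2^\varepsilon)^2\,H(c_3^\varepsilon)\,\mathrm{d}\sigma_x\,\mathrm{d}t\le C,
\end{equation*}
and you assert that \eqref{estimdifconc} ``follows''. It does not, by itself: \eqref{estimdifconc} is the \emph{unweighted} estimate, and the weight $H(c_3^\varepsilon)$ can a priori degenerate (note that $(\mathbf{H}_3)_2$ imposes $H(0)=0$). To discard the weight one needs a lower bound $H(c_3^\varepsilon)\ge h_0>0$ uniformly on $(0,T)\times\Gamma^\varepsilon$. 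This is exactly why the paper, at this step, invokes the nonnegativity \eqref{pos1} and the uniform essential bound \eqref{b1}: since $0\le c_3^\varepsilon\le C$ with $C$ independent of $\varepsilon$, and $H$ is continuous and strictly positive on the compact range of $c_3^\varepsilon$ (this is where the reading of $H$ as $\mathbb{R}^*_+$-valued enters, a point on which even the paper's hypotheses are delicate given $H(0)=0$), one gets the required $h_0$, and only then does the weighted bound give $\Vert c_1^\varepsilon-c_2^\varepsilon\Vert^2_{L^2((0,T)\times\Gamma^\varepsilon)}\le C\varepsilon/h_0$. Your write-up never appeals to Theorem \ref{teopos} or Theorem \ref{teobound} in this step, so as stated the passage from the weighted to the unweighted estimate is unjustified; once that lower bound on the weight is supplied, the rest of your argument is correct.
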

\begin{proof}
Let us prove \eqref{estimconcLinf}. To this end, we test the first equation in \eqref{weakconcentration} with $c_{i}^{\varepsilon}$, for $i\in \{1,2\}$, and the second one with $c_{3}^{\varepsilon}$, respectively. We get
\begin{equation*}
\begin{array}{l}
\displaystyle\frac{1}{2}\frac{\mathrm{d}}{\mathrm{d}t}\Vert c_{i}^{\varepsilon}\Vert
_{L^{2}(\Omega^{\ast}_{\varepsilon})}^{2}+\int_{\Omega^{\ast}_{\varepsilon}}D_i^\varepsilon \nabla
c_{i}^{\varepsilon}\cdot\nabla
c_{i}^{\varepsilon}\mathrm{d}x  =\displaystyle\int_{\Omega^{\ast}_{\varepsilon}}F_{i}%
^{\varepsilon}(x,c_{1}^{\varepsilon},c_{2}^{\varepsilon
},c_{3}^{\varepsilon})c_{i}^{\varepsilon}\mathrm{d}x+\displaystyle \frac{1}{\varepsilon}\int_{\Gamma
^{\varepsilon}}G_{i}(c_{1}^{\varepsilon},c_{2}^{\varepsilon
}, c_{3}^{\varepsilon})c_{i}^{\varepsilon}\mathrm{d}\sigma_{x}
\end{array}
\end{equation*}
and
\begin{equation*}
\begin{array}{l}
\displaystyle\frac{1}{2}\frac{\mathrm{d}}{\mathrm{d}t}\Vert c_{3}^{\varepsilon}\Vert
_{L^{2}(\Omega^{\ast}_{\varepsilon})}^{2}+\int_{\Omega^{\ast}_{\varepsilon}}D_3^\varepsilon \nabla
c_{3}^{\varepsilon}\cdot\nabla
c_{3}^{\varepsilon}\mathrm{d}x  =\displaystyle\int_{\Omega^{\ast}_{\varepsilon}}F_{3}%
^{\varepsilon}(x,c_{1}^{\varepsilon},c_{2}^{\varepsilon
},c_{3}^{\varepsilon})c_{3}^{\varepsilon}\mathrm{d}x+\displaystyle \varepsilon\int_{\Gamma
^{\varepsilon}}G_{3}^{\varepsilon}(x,c_{1}^{\varepsilon},c_{2}^{\varepsilon
}, c_{3}^{\varepsilon})c_{3}^{\varepsilon}\mathrm{d}\sigma_{x}.
\end{array}
\end{equation*}
Then, by  \eqref{formG} and assumption $(\mathbf{H}_1)_{2}$, we get
\begin{equation*}
\begin{array}{l}
\displaystyle\frac{1}{2}\left (\frac{\mathrm{d}}{\mathrm{d}t}\Vert c_{1}^{\varepsilon}\Vert
_{L^{2}(\Omega^{\ast}_{\varepsilon})}^{2}+\displaystyle \frac{\mathrm{d}}{\mathrm{d}t}\Vert c_{1}^{\varepsilon}\Vert
_{L^{2}(\Omega^{\ast}_{\varepsilon})}^{2}+\displaystyle \frac{\mathrm{d}}{\mathrm{d}t}\Vert c_{3}^{\varepsilon}\Vert
_{L^{2}(\Omega^{\ast}_{\varepsilon})}^{2} \right )\\
\\
+\alpha\left(\Vert\nabla c_{1}^{\varepsilon}\Vert
_{L^{2}(\Omega^{\ast}_{\varepsilon})}^{2}+\Vert\nabla c_{2}^{\varepsilon}\Vert
_{L^{2}(\Omega^{\ast}_{\varepsilon})}^{2}+\Vert\nabla c_{3}^{\varepsilon}\Vert
_{L^{2}(\Omega^{\ast}_{\varepsilon})}^{2}\right)\\
\\
\leq \displaystyle\int_{\Omega^{\ast}_{\varepsilon}}F_{1}^{\varepsilon}(x,c_{1}^{\varepsilon},c_{2}^{\varepsilon
},c_{3}^{\varepsilon})c_{1}%
^{\varepsilon}\mathrm{d}x+\displaystyle\int_{\Omega^{\ast}_{\varepsilon}}F_{2}^{\varepsilon}(x,c_{1}^{\varepsilon},c_{2}^{\varepsilon
},c_{3}^{\varepsilon})c_{2}%
^{\varepsilon}\mathrm{d}x+\displaystyle\int_{\Omega^{\ast}_{\varepsilon}}F_{3}^{\varepsilon}(x,c_{1}^{\varepsilon},c_{2}^{\varepsilon
},c_{3}^{\varepsilon})c_{3}%
^{\varepsilon}\mathrm{d}x\\
\\
+\displaystyle \varepsilon\int_{\Gamma^{\varepsilon}}%
G_{3}^{\varepsilon}(x,c_{1}^{\varepsilon},c_{2}^{\varepsilon
},c_{3}^{\varepsilon})c_{3}^{\varepsilon}\mathrm{d}\sigma_{x} -\displaystyle \dfrac{1}{\varepsilon}\int_{\Gamma^{\varepsilon}}%
\left (c_{1}^{\varepsilon}-c_{2}^{\varepsilon}\right )^2 H \left (c_{3}\right )\mathrm{d}\sigma_{x}.
\end{array}
\end{equation*}
Since $H$ is nonnegative, by using the growth conditions \eqref{F2} and \eqref{assG1} for $F_i^\varepsilon$ and $G_3^\varepsilon$, respectively, we have
\begin{equation}\label{estc4}
\begin{array}{l}
\displaystyle\frac{1}{2}\left (\frac{\mathrm{d}}{\mathrm{d}t}\Vert c_{1}^{\varepsilon}\Vert
_{L^{2}(\Omega^{\ast}_{\varepsilon})}^{2}+\displaystyle \frac{\mathrm{d}}{\mathrm{d}t}\Vert c_{1}^{\varepsilon}\Vert
_{L^{2}(\Omega^{\ast}_{\varepsilon})}^{2}+\displaystyle \frac{\mathrm{d}}{\mathrm{d}t}\Vert c_{3}^{\varepsilon}\Vert
_{L^{2}(\Omega^{\ast}_{\varepsilon})}^{2} \right )\\
\\
+\alpha\left(\Vert\nabla c_{1}^{\varepsilon}\Vert
_{L^{2}(\Omega^{\ast}_{\varepsilon})}^{2}+\Vert\nabla c_{2}^{\varepsilon}\Vert
_{L^{2}(\Omega^{\ast}_{\varepsilon})}^{2}+\Vert\nabla c_{3}^{\varepsilon}\Vert
_{L^{2}(\Omega^{\ast}_{\varepsilon})}^{2}\right)\\
\\
 \leq C \displaystyle\int
_{\Omega^{\ast}_{\varepsilon}}(1+|c_{1}^\varepsilon%
|+|c_{2}^\varepsilon|+|c_{3}^{\varepsilon}| )(|c_{1}^{\varepsilon}|+|c_{2}^{\varepsilon}|+|c_{3}^{\varepsilon}|)\mathrm{d}%
x+C\varepsilon\int_{\Gamma^{\varepsilon}}(1+|c_{1}^\varepsilon
|+|c_{2}^\varepsilon|+|c_{3}^{\varepsilon}| )|c_{3}^{\varepsilon}|\mathrm{d}\sigma_{x}.
\end{array}
\end{equation}
Using the inequality $2ab\leq a^2+b^2$, the fact that $|\Gamma^\varepsilon|\leq C \varepsilon^{-1}$, $|\Omega^{\ast}_{\varepsilon}|\leq C$ and $\varepsilon<1$, as well as the modified trace inequality from Lemma \ref{lemtrace}$ (iii)$, as in \cite{Gahn} or \cite{Gahn3}, we are led to
\begin{equation}\label{est7}
\begin{array}{ll}
\displaystyle \frac{\mathrm{d}}{\mathrm{d}t}\left(  \Vert c_{1}^{\varepsilon}\Vert
_{L^{2}(\Omega^{\ast}_{\varepsilon})}^{2}+\Vert c_{2}^{\varepsilon}\Vert
_{L^{2}(\Omega^{\ast}_{\varepsilon})}^{2}+ \Vert c_{3}^{\varepsilon}\Vert
_{L^{2}(\Omega^{\ast}_{\varepsilon})}^{2}\right) + \alpha\left(\Vert\nabla c_{1}^{\varepsilon}\Vert_{L^{2}(\Omega^{\ast}_{\varepsilon})}^{2}+\Vert\nabla c_{2}^{\varepsilon}%
\Vert_{L^{2}(\Omega^{\ast}_{\varepsilon})}^{2} + \Vert\nabla c_{3}^{\varepsilon}\Vert_{L^{2}(\Omega^{\ast}_{\varepsilon})}^{2} \right) \\
\\
\leq C_{1}+C_{2}\left(  \Vert
c_{1}^{\varepsilon}\Vert_{L^{2}(\Omega^{\ast}_{\varepsilon})}^{2}+\Vert
c_{2}^{\varepsilon}\Vert_{L^{2}(\Omega^{\ast}_{\varepsilon})}^{2} + \Vert
c_{3}^{\varepsilon}\Vert_{L^{2}(\Omega^{\ast}_{\varepsilon})}^{2}\right) .
\end{array}
\end{equation}
Integrating with respect to time and using Gronwall's inequality, we obtain the pointwise boundedness of $\Vert c_i^\varepsilon\Vert_{L^2(\Omega^{\ast}_{\varepsilon})}$, $i\in\{1,2,3\}$ in $(0,T)$, i.e. \eqref{estimconcLinf}. Then, \eqref{estimconcLinf} and estimate \eqref{est7} imply inequality \eqref{estimconcH1}.

In order to prove estimate \eqref{estimdifconc}, let us test the first equation in the variational formulation \eqref{weakconcentration}, written for $i=1$ and $i=2$ with $c_1^\varepsilon$ and $c_2^\varepsilon$, respectively. Summing up these two equations, moving the boundary terms in the left-hand side and taking into account \eqref{formG}, \eqref{pos1} and \eqref{b1}, we are led to \eqref{estimdifconc}.

In order to prove the estimates for the time derivative of the concentration fields, i.e. inequality \eqref{estimconcH1'}, we test the first equation in \eqref{weakconcentration} with $v\in H_0^1(\Omega^{\ast}_{\varepsilon})$, such that $\Vert v\Vert_{H_0^{1}(\Omega^{\ast}_{\varepsilon})}\leq 1$. Using similar arguments as before and the estimates \eqref{estimconcH1}, we get \eqref{estimconcH1'} for $i\in {1,2}$.
\\
In fact, for $c_3^{\varepsilon}$ we get a slightly better estimate, as in \cite{Gahn}. It is enough to test the second equation in  \eqref{weakconcentration} with $v\in H_{\partial \Omega}^{1}(\Omega^{\ast}_{\varepsilon})$ such that $\Vert v\Vert_{H_{\partial \Omega}^{1}(\Omega^{\ast}_{\varepsilon})}\leq 1$ and using similar arguments as before.
Estimate \eqref{estimconcb} follows directly from \eqref{estimconcH1} and Lemma \ref{lemtrace}$(ii)$.
\end{proof}

\bigskip

\section{Homogenization results by the periodic unfolding method}\label{Sec4}

In this section, we are interested in obtaining the effective behavior, as $\varepsilon\rightarrow0$,
of the solution $(c_{1}^{\varepsilon}, c_{2}^{\varepsilon}, c_{3}^{\varepsilon})$ of problem \eqref{weakconcentration}-\eqref{weakinitial}. To this aim, we shall use the {\it a priori} estimates given in Lemma \ref{lemest} to derive convergence results for the sequences $(c_{1}^{\varepsilon}, c_{2}^{\varepsilon}, c_{3}^{\varepsilon})$. In order to pass to the limit in the nonlinear terms in the variational formulation of \eqref{eqmicro}, we need to establish strong convergence results. For  using classical compactness results, we shall extend the functions $(c_{1}^{\varepsilon}, c_{2}^{\varepsilon}, c_{3}^{\varepsilon})$ to the whole domain $\Omega$ and we shall use unfolding operators, which transform functions on varying domains to functions on fixed domains (see, for instance, \cite{Cio-Dam-Don-Gri-Zaki}, \cite{Cio-Dam-Gri1}, \cite{Cioranescu-Donato-Zaki}, \cite {Donato}, \cite{Amar1}, and \cite{Amar2}). Through the unfolding method, which is more or less equivalent to the two-scale
convergence (see \cite{Allaire4} and \cite{N}), we can handle easier the nonlinearities on  $\Gamma^\varepsilon$.

\subsection{The time-dependent unfolding operator}\label{sub2.3}

In this subsection, we start by briefly recalling the definition and the main properties of the unfolding operator $\mathcal{T}^\ast_{\varepsilon }$ introduced, for time-dependent functions, in \cite{Donato-Yang*} (see also \cite{Donato-Yang}). Since time is a parameter, the results in \cite{Donato-Yang*} are direct generalizations of the corresponding ones from \cite{Cio-Dam-Don-Gri-Zaki}. For a more general setting of unfolding operators with time, we refer to \cite{Amar1} (see also \cite{Amar2}). For the unfolding operator defined in fixed domains, we refer the reader to \cite{Cio-Dam-Gri1}.
In the sequel, for  $z\in \mathbb{R}^{n}$, we use $\left[ z\right] _{Y}$ to
denote its integer part $k$,  such that $z-\left[ z\right] _{Y}\in Y$,
and we set
\begin{equation*}
\left\{ z\right\} _{Y}=z-\left[ z\right] _{Y}\in Y\text{ \ \ \ \ in } \mathbb{R}^{n}.
\end{equation*}%
Then,  for $x\in \mathbb{R}^n$, one has%
\begin{equation*}
x=\varepsilon \left( \left[ \frac{x}{\varepsilon }\right] _{Y}+\left\{ \frac{%
x}{\varepsilon }\right\} _{Y}\right).\end{equation*}%
In order to define the periodic unfolding operators, let us introduce the following sets as in \cite{Cio-Dam-Don-Gri-Zaki, Cio-Dam-Gri1}. Let
\begin{equation}\label{Omegahat}
\widehat{\Omega}_\varepsilon=\text{interior} \left\{\bigcup_{{\bf k} \in K_\varepsilon}\varepsilon ( {\bf k} + \overline{Y})\right\},\quad\quad\Lambda_\varepsilon=\Omega \setminus \widehat{\Omega}_\varepsilon,
\end{equation}
where $K_\varepsilon$ is the same as in Section \ref{Sec2}. Set
\begin{equation}\label{Omegahat*}
\widehat{\Omega}^\ast_\varepsilon=\widehat{\Omega}_\varepsilon\setminus S_\varepsilon,\quad\quad\Lambda^\ast_\varepsilon=\Omega^\ast_\varepsilon \setminus \widehat{\Omega}^\ast_\varepsilon.
\end{equation}
\noindent Moreover, throughout the paper we denote:
\begin{itemize}
\item $\widetilde{u}$: the zero extension to the whole $\Omega $ of a  function
$u$ defined on $\Omega ^{\ast}_{\varepsilon }$,
\item $\mathcal{M}_{E }\left( f\right) :=\dfrac{1}{\left\vert E
\right\vert }\displaystyle\int\nolimits_{E}f\de x$, the average on E of any function $f\in L^{1}\left(E\right)$.

\end{itemize}
Let us first recall the unfolding operator $\mathcal{T}_\varepsilon$ for the fixed domain $\Omega\times(0,T)$ introduced in \cite{Cio-Dam-Gri1} (see also \cite{Gaveau} where
the properties of $\mathcal{T}_\varepsilon$ are stated without proofs). Using the same notation as in \cite{Cio-Dam-Gri1}, let us give the following definition:
\begin{definition}\label{defunf}
Let $T>0$. For $p\in[1,+\infty)$ and $q\in [1+\infty]$, we define the operator $\mathcal{T}_{\varepsilon}:L^q(0,T;L^p(\Omega)) \rightarrow L^q(0,T;L^p(\Omega\times Y)) $ as follows:
\begin{equation*}
\mathcal{T}_{\varepsilon }\left( \varphi \right) \left( t,x,y\right)=\left\{
\begin{array}{ll}
\varphi \left( t, \varepsilon \left[ \dfrac{x}{\varepsilon }\right]%
_{Y}+\varepsilon y\right) & \, \, {\rm a.e.\ for }\left( t,x,y\right) \in (0,T) \times \widehat{\Omega}_\varepsilon \times Y\\
\\
0 & \, \, {\rm a.e.\ for }\left( t,x,y\right) \in (0,T) \times \Lambda_\varepsilon \times Y.
\end{array}
\right.
\end{equation*}
\end{definition}
Concerning perforated domains, we have the definition below (see \cite{Donato-Yang*}):
\begin{definition}\label{defunf*}
Let $T>0$. For $p\in[1,+\infty)$ and $q\in [1+\infty]$, we define the operator $\mathcal{T}^\ast_{\varepsilon}:L^q(0,T;L^p(\Omega^\ast_\varepsilon)) \rightarrow L^q(0,T;L^p(\Omega\times Y^\ast)) $ as follows:
\begin{equation*}
\mathcal{T}^\ast_{\varepsilon }\left( \varphi \right) \left( t,x,y\right)=\left\{
\begin{array}{ll}
\varphi \left( t, \varepsilon \left[ \dfrac{x}{\varepsilon }\right]%
_{Y}+\varepsilon y\right) & \, \, {\rm a.e.\ for }\left( t,x,y\right) \in (0,T) \times \widehat{\Omega}_\varepsilon \times Y^\ast\\
\\
0 & \, \, {\rm a.e.\ for }\left( t,x,y\right) \in (0,T) \times \Lambda_\varepsilon \times Y^\ast.
\end{array}
\right.
\end{equation*}
\end{definition}
Following the Remark 2.5 in \cite{Cio-Dam-Don-Gri-Zaki}, since the time variable acts as a simple parameter, the relationship between $\mathcal{T}_\varepsilon$ and $\mathcal{T}^\ast_\varepsilon$ is given, for any $\varphi$ defined on $(0,T)\times\Omega^\ast_\varepsilon$, by
\begin{equation}\label{link}
\mathcal{T}^\ast_\varepsilon(\varphi)=\mathcal{T}_\varepsilon(\widetilde{\varphi})_{|\Omega\times Y^\ast}.
\end{equation}
Actually, the previous equality still holds with every extension of $\varphi$ from $\Omega^\ast_\varepsilon$ into $\Omega$. In particular, for $\varphi$ defined on $\Omega$, we have
\begin{equation*}
\mathcal{T}^\ast_\varepsilon(\varphi_{|\Omega^\ast_\varepsilon})=\mathcal{T}_\varepsilon(\varphi)_{|\Omega\times Y^\ast}.
\end{equation*}
Hence, the operator $\mathcal{T}^\ast_\varepsilon$ inherits the properties of the operator  $\mathcal{T}_\varepsilon$ (see \cite{Donato-Yang*, Donato-Yang}) and for the reader's convenience they are recalled in the sequel.
\\
In particular, some immediate consequences of Definition \ref{defunf*} are:
\begin{enumerate}
\item[(i)] $\mathcal{T}^\ast_{\varepsilon }\left( \varphi \psi \right) =%
\mathcal{T}^\ast_{\varepsilon }\left( \varphi \right) \mathcal{T}^\ast_{\varepsilon }\left( \psi \right) $, for every $\varphi, \psi \in L^q(0,T;L^p(\Omega^\ast_\varepsilon))$;

\item[(ii)] $\mathcal{T}^\ast_{\varepsilon }\left( \varphi \psi \right) =%
\mathcal{T}^\ast_{\varepsilon }\left( \varphi \right)\psi$, for every $\varphi\in L^p(\Omega^\ast_\varepsilon)$ and $\psi\in  L^q(0,T)$;
\item[(iii)]$\nabla _{y}\left[ \mathcal{T}^\ast_{\varepsilon }\left(\varphi
\right) \right] =\varepsilon \mathcal{T}^\ast_{\varepsilon }\left(\nabla
\varphi \right)$ for every $\varphi \in L^q(0,T;W^{1,p}\left( \Omega^\ast_\varepsilon\right) )$;
\item[(iv)] $\mathcal{T}^\ast_\varepsilon\left(\varphi\left(t,\frac{x}{\varepsilon}\right)\right)=\varphi(t,y)$ a.e. in $(0,T)\times \Omega \times Y^\ast$ for any $Y-$periodic function
$\varphi \in L^q(0,T;L^p(Y^\ast))$;
\item[(v)] for all $\varphi\in L^q(0,T;L^p(\Omega^\ast_\varepsilon))$, we get
\begin{equation}\label{derunf}
\dfrac{\partial}{\partial t}(\mathcal{T}^\ast_{\varepsilon }(\varphi))(t,x,y)=\dfrac{\partial\varphi}{\partial t}\left(t, \varepsilon \left[ \dfrac{x}{\varepsilon }\right]%
_{Y}+\varepsilon y\right)=\mathcal{T}^\ast_{\varepsilon }\left(\dfrac{\partial\varphi}{\partial t}\right)(t,x,y)\,\text{ for a.e. }(t,x,y)\in (0,T)\times \Omega\times Y^\ast.
\end{equation}
\end{enumerate}
\begin{proposition}
\label{property}Let $T>0$. For $p\in \lbrack 1,+\infty \lbrack $ and $q\in [1,+\infty]$, let $\varphi^\varepsilon \in L^q(0,T; L^{1}\left( \Omega^\ast_\varepsilon\right))$ satisfying
\[\int_0^T\int_{\Lambda^\ast_\varepsilon}|\varphi^\varepsilon|\de x\de t \rightarrow 0.
\]
Then, one has
\begin{equation*}
\int_0^T\int\nolimits_{\Omega^\ast_\varepsilon}\varphi^\varepsilon \de x \de t-\frac{1}{\left\vert Y\right\vert }\int_0^T\int\nolimits_{\Omega \times Y^\ast}%
\mathcal{T}^\ast_{\varepsilon }\left( \varphi^\varepsilon \right) \de x \de y \de t\rightarrow 0.
\end{equation*}
As usual, this is denoted by
\begin{equation*}
\int_0^T\int\nolimits_{\Omega^\ast_\varepsilon}\varphi^\varepsilon \de x \de t \backsimeq \frac{1}{\left\vert Y\right\vert }\int_0^T\int\nolimits_{\Omega \times Y^\ast}%
\mathcal{T}^\ast_{\varepsilon }\left( \varphi^\varepsilon \right) \de x \de y \de t.
\end{equation*}
As a consequence, we have:
\begin{enumerate}
\item [(i)] for every $\varphi \in L^q(0,T;L^{p}\left( \Omega^\ast_\varepsilon\right) )$, one gets
\begin{equation*}
\left\Vert \mathcal{T}^\ast_{\varepsilon }\left( \varphi \right)
\right\Vert_{L^q(0,T;\, L^{p}\left( \Omega \times Y^\ast\right) )}\leq \left\vert
Y\right\vert^{1/p}\left\Vert \varphi \right\Vert _{L^q(0,T;\, L^{p}\left( \Omega^\ast_\varepsilon\right)) },
\end{equation*}
which means that the operator $\mathcal{T}^\ast_{\varepsilon }$ is continuous from $L^q(0,T;L^p(\Omega^\ast_\varepsilon))$ to $L^q(0,T;L^p(\Omega\times Y^\ast)) $;
\item[(ii)] for every $\varphi \in L^q(0,T;W^{1,p}\left( \Omega^\ast_\varepsilon\right) )
$, one has $\mathcal{T}^\ast_{\varepsilon }\left(\varphi\right)
\in L^q(0,T;L^{2}\left( \Omega,W^{1,p}\left( Y^\ast\right) )\right);
$
\item[(iii)] for $p, q\in (1,+\infty]$, let $\varphi^\varepsilon\in L^q(0,T; L^p(\Omega^\ast_\varepsilon))$ and $\psi\in L^{q'}(0,T; L^{p'}(\Omega^\ast_\varepsilon))$, with $\dfrac{1}{p}+\dfrac{1}{p'}=1$, $\dfrac{1}{q}+\dfrac{1}{q'}=1$ such that
\begin{equation}
\|\varphi_\varepsilon\|_{L^q(0,T; L^p(\Omega^\ast_\varepsilon))}\leq C\quad\text{ and }\quad \|\psi\|_{L^{q'}(0,T; L^{p'}(\Omega^\ast_\varepsilon))}\leq C
\end{equation}
with $C$ a positive constant independent of $\varepsilon$. Then,
\begin{equation*}
\int_0^T\int\nolimits_{\Omega^\ast_\varepsilon}\varphi^\varepsilon\psi \de x \de t \backsimeq \frac{1}{\left\vert Y\right\vert }\int_0^T\int\nolimits_{\Omega \times Y^\ast}%
\mathcal{T}^\ast_{\varepsilon }\left( \varphi_\varepsilon \right)\mathcal{T}^\ast_{\varepsilon }\left( \psi\right) \de x \de y \de t;
\end{equation*}
\item[(iv)] for $p, q\in (1,+\infty]$, let $\varphi^\varepsilon\in L^q(0,T; L^p(\Omega^\ast_\varepsilon))$ and $\psi^\varepsilon\in L^{q'}(0,T; L^{p_0}(\Omega^\ast_\varepsilon))$, with $\dfrac{1}{p}+\dfrac{1}{p_0}<1$, $\dfrac{1}{q}+\dfrac{1}{q'}=1$ such that
\begin{equation}
\|\varphi^\varepsilon\|_{L^q(0,T; L^p(\Omega^\ast_\varepsilon))}\leq C\quad\text{ and }\quad \|\psi^\varepsilon\|_{L^{q'}(0,T; L^{p_0}(\Omega^\ast_\varepsilon))}\leq C
\end{equation}
with $C$ a positive constant independent of $\varepsilon$. Then
\begin{equation*}
\int_0^T\int\nolimits_{\Omega^\ast_\varepsilon}\varphi^\varepsilon\psi^\varepsilon \de x \de t \backsimeq \frac{1}{\left\vert Y\right\vert }\int_0^T\int\nolimits_{\Omega \times Y^\ast}%
\mathcal{T}^\ast_{\varepsilon }\left( \varphi^\varepsilon \right)\mathcal{T}^\ast_{\varepsilon }\left( \psi^\varepsilon\right) \de x \de y \de t.
\end{equation*}
\end{enumerate}
\end{proposition}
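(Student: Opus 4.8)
The plan is to reduce the whole proposition to a single exact, cell-by-cell integral identity and then read off the four consequences using Hölder's inequality together with the geometric fact that the boundary layer $\Lambda^\ast_\varepsilon$ has vanishing measure. Since time enters only as a parameter, every spatial estimate can simply be integrated in $t$ at the end, so there is nothing genuinely time-dependent to control.

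First I would establish the key identity. Fixing $t$ and working cell by cell, on each cell $\varepsilon(\mathbf{k}+Y)$ with $\mathbf{k}\in K_\varepsilon$ the function $\mathcal{T}^\ast_\varepsilon(\varphi^\varepsilon)(t,x,\cdot)$ is independent of $x$, and the change of variables $z=\varepsilon\mathbf{k}+\varepsilon y$ (Jacobian $\varepsilon^n$) gives
\[
\frac{1}{|Y|}\int_{\varepsilon(\mathbf{k}+Y)}\int_{Y^\ast}\mathcal{T}^\ast_\varepsilon(\varphi^\varepsilon)(t,x,y)\de y\de x=\int_{\varepsilon(\mathbf{k}+Y^\ast)}\varphi^\varepsilon(t,z)\de z .
\]
Summing over $\mathbf{k}\in K_\varepsilon$ and recalling that $\mathcal{T}^\ast_\varepsilon$ vanishes on $\Lambda_\varepsilon\times Y^\ast$ yields the exact relation
\[
\frac{1}{|Y|}\int_{\Omega\times Y^\ast}\mathcal{T}^\ast_\varepsilon(\varphi^\varepsilon)(t,\cdot)\de x\de y=\int_{\widehat{\Omega}^\ast_\varepsilon}\varphi^\varepsilon(t,\cdot)\de x .
\]
Subtracting from $\int_{\Omega^\ast_\varepsilon}\varphi^\varepsilon(t,\cdot)\de x$ and using $\Omega^\ast_\varepsilon=\widehat{\Omega}^\ast_\varepsilon\cup\Lambda^\ast_\varepsilon$ leaves exactly $\int_{\Lambda^\ast_\varepsilon}\varphi^\varepsilon(t,\cdot)\de x$. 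Integrating in $t$ and bounding by the modulus, the hypothesis $\int_0^T\int_{\Lambda^\ast_\varepsilon}|\varphi^\varepsilon|\de x\de t\to 0$ forces the asserted convergence. This identity is the heart of the proposition; all else follows.

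For (i) I would apply the exact identity with $|\varphi|^p$ in place of $\varphi^\varepsilon$: directly from the definition (composition with a measurable map) $\mathcal{T}^\ast_\varepsilon$ commutes with pointwise nonlinearities, so $\mathcal{T}^\ast_\varepsilon(|\varphi|^p)=|\mathcal{T}^\ast_\varepsilon(\varphi)|^p$, whence $\int_{\Omega\times Y^\ast}|\mathcal{T}^\ast_\varepsilon(\varphi)|^p=|Y|\int_{\widehat{\Omega}^\ast_\varepsilon}|\varphi|^p\le |Y|\int_{\Omega^\ast_\varepsilon}|\varphi|^p$; taking $p$-th roots pointwise in $t$ and then the $L^q$-norm in time gives the continuity bound (with the obvious modification for $q=+\infty$). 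Statement (ii) then follows by combining this $L^p$-bound with the scaling rule $\nabla_y\mathcal{T}^\ast_\varepsilon(\varphi)=\varepsilon\mathcal{T}^\ast_\varepsilon(\nabla\varphi)$ [property (iii) after Definition~\ref{defunf*}], which places both $\mathcal{T}^\ast_\varepsilon(\varphi)$ and its $y$-gradient in the required spaces.

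Finally, for the product formulas (iii) and (iv) I would apply the key identity to $\Phi^\varepsilon:=\varphi^\varepsilon\psi$ (resp.\ $\varphi^\varepsilon\psi^\varepsilon$) and use the multiplicativity $\mathcal{T}^\ast_\varepsilon(\varphi\psi)=\mathcal{T}^\ast_\varepsilon(\varphi)\mathcal{T}^\ast_\varepsilon(\psi)$; the only point to verify is $\int_0^T\int_{\Lambda^\ast_\varepsilon}|\Phi^\varepsilon|\de x\de t\to 0$. In case (iii), with conjugate exponents $\tfrac1p+\tfrac1{p'}=1$ and $\tfrac1q+\tfrac1{q'}=1$, Hölder bounds this by $\|\varphi^\varepsilon\|_{L^q(0,T;L^p(\Omega^\ast_\varepsilon))}\,\|\psi\,\mathbf{1}_{\Lambda^\ast_\varepsilon}\|_{L^{q'}(0,T;L^{p'}(\Omega^\ast_\varepsilon))}$, and since $\psi$ is fixed while $|\Lambda^\ast_\varepsilon|\to 0$, absolute continuity of the Lebesgue integral (dominated convergence in $t$) sends the second factor to zero. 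In case (iv) the strict inequality $\tfrac1p+\tfrac1{p_0}<1$ opens a gap: choosing $r$ with $\tfrac1p+\tfrac1{p_0}+\tfrac1r=1$ and applying Hölder in space to the three factors $\varphi^\varepsilon,\psi^\varepsilon,\mathbf{1}_{\Lambda^\ast_\varepsilon}$, then Hölder in time with $q,q'$, produces the bound $|\Lambda^\ast_\varepsilon|^{1/r}\,\|\varphi^\varepsilon\|_{L^q(0,T;L^p(\Omega^\ast_\varepsilon))}\,\|\psi^\varepsilon\|_{L^{q'}(0,T;L^{p_0}(\Omega^\ast_\varepsilon))}\le C^2\,|\Lambda^\ast_\varepsilon|^{1/r}$, which vanishes because $\Lambda^\ast_\varepsilon$ is a boundary layer of width $O(\varepsilon)$. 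The main obstacle is really the exponent bookkeeping: one must notice that (iii) has no exponent to spare and so relies on absolute continuity, whereas (iv) exploits the spare integrability $\tfrac1r>0$ to extract the decaying power of $|\Lambda^\ast_\varepsilon|$.
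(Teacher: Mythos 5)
You have given a correct and fully self-contained proof, but its relationship to the paper is peculiar: the paper does not actually prove Proposition \ref{property}. It recalls it from \cite{Donato-Yang*,Donato-Yang}, the implicit argument being that time enters only as a parameter, so that through the restriction identity \eqref{link} the operator $\mathcal{T}^{\ast}_{\varepsilon}$ inherits the corresponding stationary properties of the fixed-domain operator $\mathcal{T}_{\varepsilon}$ established in \cite{Cio-Dam-Gri1,Cio-Dam-Don-Gri-Zaki}. Your route is the direct one: the cell-by-cell change of variables gives the exact identity $\frac{1}{|Y|}\int_{\Omega\times Y^{\ast}}\mathcal{T}^{\ast}_{\varepsilon}(\varphi^{\varepsilon})(t,\cdot)\,\mathrm{d}x\,\mathrm{d}y=\int_{\widehat{\Omega}^{\ast}_{\varepsilon}}\varphi^{\varepsilon}(t,\cdot)\,\mathrm{d}x$, so the defect in the unfolding criterion is exactly $\int_{\Lambda^{\ast}_{\varepsilon}}\varphi^{\varepsilon}\,\mathrm{d}x$, and items (i)--(iv) follow by applying this identity to $|\varphi|^{p}$, to $\nabla\varphi$ (through $\nabla_{y}\mathcal{T}^{\ast}_{\varepsilon}(\varphi)=\varepsilon\mathcal{T}^{\ast}_{\varepsilon}(\nabla\varphi)$) and to products (through multiplicativity), with the exponent bookkeeping done correctly: in (iii) you rightly observe that $\psi$ must be read as a fixed function, so that absolute continuity of its integral over the vanishing set $\Lambda^{\ast}_{\varepsilon}$ applies, whereas in (iv) the strict inequality $1/p+1/p_{0}<1$ is spent to extract the decaying factor $|\Lambda^{\ast}_{\varepsilon}|^{1/r}$. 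This is essentially the proof found in the cited references, reconstructed; what it buys over the paper's bare citation is that it makes visible exactly where the hypothesis on $\Lambda^{\ast}_{\varepsilon}$ and the exponent assumptions enter. Two harmless caveats: in (ii) your argument yields $\mathcal{T}^{\ast}_{\varepsilon}(\varphi)\in L^{q}(0,T;L^{p}(\Omega;W^{1,p}(Y^{\ast})))$, which is the intended conclusion --- the space $L^{2}(\Omega,W^{1,p}(Y^{\ast}))$ in the statement is a misprint for $L^{p}$ (compare \cite{Cio-Dam-Don-Gri-Zaki}), and your space embeds into the printed one only when $p\geq 2$; moreover, for the specific geometry assumed in this paper (Remark \ref{remgeo}) one has $\Lambda_{\varepsilon}=\Lambda^{\ast}_{\varepsilon}=\emptyset$, so your key identity makes every relation $\backsimeq$ an exact equality, with no limiting argument needed at all.
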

Moreover, we have the following convergence properties.
\begin{proposition}
\label{convergence}
\begin{enumerate}
\item[(i)] Let $\varphi \in L^q(0,T;L^{p}\left( \Omega \right) )$. Then,
\begin{equation*}
\mathcal{T}^\ast_{\varepsilon }\left( \varphi \right) \longrightarrow \varphi
\text{ \ strongly in }L^q(0,T;L^{p}\left( \Omega \times Y^\ast\right)) \text{.}
\end{equation*}
\item[(ii)] Let $\varphi ^{\varepsilon }\in
L^q(0,T;L^{p}\left( \Omega \right)) $ such that $\varphi
^{\varepsilon}\longrightarrow \varphi $ strongly in $L^q(0,T;L^{p}\left( \Omega
\right)) $. Then,
\begin{equation*}
\mathcal{T}^\ast_{\varepsilon }\left( \varphi ^{\varepsilon
}\right)\longrightarrow \varphi \text{ \ strongly in }L^q(0,T;L^{p}\left( \Omega
\times Y^\ast\right)) \text{.}
\end{equation*}
\item[(iii)] Let $\varphi ^{\varepsilon }\in L^q(0,T;L^{p}\left(
\Omega^\ast_\varepsilon\right)) $ satisfy $\left\Vert \varphi
^{\varepsilon}\right\Vert _{L^q(0,T;L^{p}\left(
\Omega^\ast_\varepsilon\right))}\leq C$ and
\[
\mathcal{T}^\ast_{\varepsilon }\left( \varphi ^{\varepsilon
}\right) \rightharpoonup \widehat{\varphi }\,\text{ weakly in } L^q(0,T;L^{p}\left( \Omega
\times Y^\ast\right)).
\]
Then,
\begin{equation*}
\widetilde{\varphi }^{\varepsilon }\rightharpoonup \dfrac{\vert Y^\ast\vert}{\vert Y\vert }\mathcal{M}%
_{Y^\ast}\left( \widehat{\varphi }\right) \text{ \ weakly in }%
L^q(0,T;L^{p}\left(\Omega \right) ).
\end{equation*}
\end{enumerate}
\end{proposition}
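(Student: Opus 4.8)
The three statements are the standard strong and weak convergence properties of the unfolding operator, and the plan is to prove them in the order (i) $\Rightarrow$ (ii) $\Rightarrow$ (iii), using (i) as the workhorse for the other two.

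For (i), I would first establish the claim on a dense subclass and then bootstrap by continuity. Take $\varphi$ in the dense class of functions that are continuous on $[0,T]\times\overline{\Omega}$ (or $\varphi\in\mathcal{D}((0,T)\times\Omega)$). On $(0,T)\times\widehat{\Omega}_\varepsilon\times Y^\ast$ one has, by Definition \ref{defunf*},
$$\big|\mathcal{T}^\ast_\varepsilon(\varphi)(t,x,y)-\varphi(t,x)\big|=\Big|\varphi\big(t,\varepsilon[x/\varepsilon]_Y+\varepsilon y\big)-\varphi(t,x)\Big|,$$
and since $|\varepsilon[x/\varepsilon]_Y+\varepsilon y-x|=\varepsilon\,|y-\{x/\varepsilon\}_Y|\le\varepsilon\sqrt{n}$ with both $y,\{x/\varepsilon\}_Y\in Y$, uniform continuity forces the right-hand side to $0$ uniformly in $(t,x,y)$. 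On the thin layer $(0,T)\times\Lambda_\varepsilon\times Y^\ast$, where $\mathcal{T}^\ast_\varepsilon(\varphi)\equiv 0$, the remaining contribution is controlled by $|Y^\ast|\int_0^T\!\!\int_{\Lambda_\varepsilon}|\varphi|^p$, which vanishes because $|\Lambda_\varepsilon|\to 0$. Hence $\mathcal{T}^\ast_\varepsilon(\varphi)\to\varphi$ strongly for such $\varphi$. For a general $\varphi$, choose $\varphi_\delta$ in the dense class with $\|\varphi-\varphi_\delta\|<\delta$ and split
$$\|\mathcal{T}^\ast_\varepsilon(\varphi)-\varphi\|\le\|\mathcal{T}^\ast_\varepsilon(\varphi-\varphi_\delta)\|+\|\mathcal{T}^\ast_\varepsilon(\varphi_\delta)-\varphi_\delta\|+\|\varphi_\delta-\varphi\|;$$
the first term is $\le|Y|^{1/p}\delta$ by the continuity bound in Proposition \ref{property}(i), the second tends to $0$ by the dense case, and the third is $<\delta$, so $\limsup\le(|Y|^{1/p}+1)\delta$ and (i) follows by letting $\delta\to 0$.

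Part (ii) is then a one-line triangle inequality: writing $\mathcal{T}^\ast_\varepsilon(\varphi^\varepsilon)-\varphi=\mathcal{T}^\ast_\varepsilon(\varphi^\varepsilon-\varphi)+(\mathcal{T}^\ast_\varepsilon(\varphi)-\varphi)$, the first term has norm $\le|Y|^{1/p}\|\varphi^\varepsilon-\varphi\|\to 0$ (linearity of $\mathcal{T}^\ast_\varepsilon$, Proposition \ref{property}(i), and the hypothesis $\varphi^\varepsilon\to\varphi$), while the second tends to $0$ by part (i).

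For (iii), I would argue by duality. Fix a test function $\psi\in L^{q'}(0,T;L^{p'}(\Omega))$ and note that, since $\widetilde{\varphi}^\varepsilon$ vanishes off $\Omega^\ast_\varepsilon$,
$$\int_0^T\!\!\int_\Omega\widetilde{\varphi}^\varepsilon\,\psi\,\de x\,\de t=\int_0^T\!\!\int_{\Omega^\ast_\varepsilon}\varphi^\varepsilon\,\psi\,\de x\,\de t.$$
Applying Proposition \ref{property}(iii) (with $\varphi^\varepsilon$ bounded by hypothesis and $\psi$ fixed, hence of bounded norm), the right-hand side is asymptotically equal to $\tfrac{1}{|Y|}\int_0^T\!\!\int_{\Omega\times Y^\ast}\mathcal{T}^\ast_\varepsilon(\varphi^\varepsilon)\,\mathcal{T}^\ast_\varepsilon(\psi)$. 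Here $\mathcal{T}^\ast_\varepsilon(\varphi^\varepsilon)\rightharpoonup\widehat{\varphi}$ weakly by assumption, while $\mathcal{T}^\ast_\varepsilon(\psi)\to\psi$ strongly by part (i) applied with the exponents $p',q'$, so this weak$\times$strong pairing passes to the limit and yields $\tfrac{1}{|Y|}\int_0^T\!\!\int_{\Omega\times Y^\ast}\widehat{\varphi}\,\psi$. Since $\psi$ is independent of $y$, the inner $y$-integral is $\int_{Y^\ast}\widehat{\varphi}\,\de y=|Y^\ast|\,\mathcal{M}_{Y^\ast}(\widehat{\varphi})$, giving
$$\int_0^T\!\!\int_\Omega\widetilde{\varphi}^\varepsilon\,\psi\,\de x\,\de t\longrightarrow\frac{|Y^\ast|}{|Y|}\int_0^T\!\!\int_\Omega\mathcal{M}_{Y^\ast}(\widehat{\varphi})\,\psi\,\de x\,\de t,$$
which is exactly the asserted weak convergence; the uniform bound $\|\widetilde{\varphi}^\varepsilon\|=\|\varphi^\varepsilon\|\le C$ guarantees that testing against such $\psi$ identifies the full weak limit rather than merely subsequential ones. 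The step requiring the most care is this last one: one must simultaneously invoke the strong convergence from (i) for the test function together with the assumed weak convergence of $\mathcal{T}^\ast_\varepsilon(\varphi^\varepsilon)$, and then collapse the $Y^\ast$-integral into the mean $\mathcal{M}_{Y^\ast}$ with the correct geometric factor $|Y^\ast|/|Y|$. The boundary-layer estimate $|\Lambda_\varepsilon|\to 0$ underlying part (i) is precisely what legitimizes all the $\backsimeq$ replacements throughout.
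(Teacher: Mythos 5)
Your proof is correct, but note that the paper itself does not prove Proposition \ref{convergence}: it is a recalled result, the point being that, through the identity \eqref{link} and the fact that time acts as a mere parameter, $\mathcal{T}^\ast_\varepsilon$ inherits these properties from the fixed-domain operator $\mathcal{T}_\varepsilon$, for which the statements are proved in the cited references (Cioranescu--Damlamian--Donato--Griso--Zaki and Donato--Yang). Your route is instead self-contained and reconstructs, in substance, the standard proof from those references: for (i), the oscillation estimate $|\varepsilon[x/\varepsilon]_Y+\varepsilon y-x|\le\varepsilon\sqrt{n}$ on $\widehat{\Omega}_\varepsilon\times Y^\ast$ plus the vanishing of the boundary layer $\Lambda_\varepsilon$, upgraded to general $\varphi$ by density and the operator bound of Proposition \ref{property}(i); for (ii), linearity plus that same bound; for (iii), duality, pairing the assumed weak convergence with the strong convergence of the unfolded test function from (i) and the exactness formula of Proposition \ref{property}(iii), then collapsing the $y$-integral into $|Y^\ast|\,\mathcal{M}_{Y^\ast}(\widehat{\varphi})$. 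What your write-up buys is independence from the literature; what the paper's citation buys is brevity. Two small caveats you should make explicit: the density step in (i) requires $q<+\infty$ (continuous functions are not dense in $L^\infty$ in time), and the duality argument in (iii) needs $p>1$ and $q<+\infty$ (or a weak-$*$ formulation) so that $L^{q'}(0,T;L^{p'}(\Omega))$ is the full dual of $L^{q}(0,T;L^{p}(\Omega))$; these restrictions match the exponent ranges already assumed in Proposition \ref{property}(iii). Finally, in this paper's particular geometry the layer is empty, $\Lambda_\varepsilon=\emptyset$ (Remark \ref{remgeo}), so your boundary-layer estimate, while needed in general, is vacuous here.
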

Let us finally recall a known result about the convergences of the previously introduced unfolding operators $\mathcal{T}_\varepsilon$ and $\mathcal{T}^\ast_\varepsilon$, applied to bounded sequences in $H^1(\Omega)$ and $H^1(\Omega^\ast_\varepsilon)$, respectively.
\begin{theorem}\label{u1}
Let $v_\varepsilon$ be a sequence in $L^2(0,T;H^1(\Omega))$ such that
\[
\|v_\varepsilon\|_{L^2(0,T;H^1(\Omega))}\leq C,
 \]
with $C$ a positive constant independent of  $\varepsilon$. Then, there exist $v\in L^2(0,T;H^1(\Omega))$ and $\widehat{v}\in L^2 \left (0,T;L^{2}( \Omega,H_{per}^1(Y)/\Bbb R)\right)$ such that, up to a subsequence,
\begin{equation}
\left\{
\begin{array}{lll}
\mathcal{T}_{\varepsilon }\left( v_{\varepsilon }\right)
\rightharpoonup v & \text{weakly in} & L^2\left (0,T;L^{2}( \Omega,H^{1}\left(
Y\right) )\right), \\[2 mm]
\mathcal{T}_{\varepsilon }\left( \nabla v_{\varepsilon }\right)
\rightharpoonup \nabla v+\nabla _{y}\widehat{v}& \text{weakly in} &
L^2(0,T;L^{2}\left( \Omega \times Y\right)).
\end{array}%
\right.  \label{8}
\end{equation}
\end{theorem}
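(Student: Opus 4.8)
The plan is to follow the classical unfolding argument of \cite{Cio-Dam-Gri1} and \cite{Donato-Yang*}, with time entering only as a parameter. First I would invoke the continuity estimate for $\mathcal{T}_\varepsilon$ (the full-domain analogue of Proposition \ref{property}(i)) together with the bound $\|v_\varepsilon\|_{L^2(0,T;H^1(\Omega))}\leq C$ to get
\[
\|\mathcal{T}_\varepsilon(v_\varepsilon)\|_{L^2(0,T;L^2(\Omega\times Y))}\leq C,\qquad \|\mathcal{T}_\varepsilon(\nabla v_\varepsilon)\|_{L^2(0,T;L^2(\Omega\times Y))}\leq C.
\]
Since these are Hilbert spaces, weak compactness yields, up to a subsequence, $\mathcal{T}_\varepsilon(v_\varepsilon)\rightharpoonup v_0$ and $\mathcal{T}_\varepsilon(\nabla v_\varepsilon)\rightharpoonup \xi$ weakly in $L^2(0,T;L^2(\Omega\times Y))$ (the latter componentwise), for some $v_0$ and some $\xi$.

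Next I would use the derivative relation $\nabla_y[\mathcal{T}_\varepsilon(v_\varepsilon)]=\varepsilon\,\mathcal{T}_\varepsilon(\nabla v_\varepsilon)$ (the analogue of property (iii) stated above). Its right-hand side is bounded in $L^2$, so $\nabla_y[\mathcal{T}_\varepsilon(v_\varepsilon)]\to 0$ strongly; together with $\mathcal{T}_\varepsilon(v_\varepsilon)\rightharpoonup v_0$ this shows that $\mathcal{T}_\varepsilon(v_\varepsilon)$ is in fact bounded in $L^2(0,T;L^2(\Omega;H^1(Y)))$ and that its limit obeys $\nabla_y v_0=0$, hence $v_0=v(t,x)$ is independent of $y$. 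To identify $v$ with the weak $H^1$-limit of $v_\varepsilon$, I would apply the full-domain analogue of Proposition \ref{convergence}(iii): it gives $v_\varepsilon\rightharpoonup \mathcal{M}_Y(v_0)=v$ and $\nabla v_\varepsilon\rightharpoonup \mathcal{M}_Y(\xi)$ weakly in $L^2(0,T;L^2(\Omega))$. Since $v_\varepsilon$ is also weakly convergent in the reflexive space $L^2(0,T;H^1(\Omega))$, its limit there must be $v$, so $v\in L^2(0,T;H^1(\Omega))$ and $\mathcal{M}_Y(\xi)=\nabla v$.

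The crucial and hardest step is to show that the gradient limit splits as $\xi=\nabla v+\nabla_y\widehat{v}$ with $\widehat{v}\in L^2(0,T;L^2(\Omega;H^1_{per}(Y)/\mathbb{R}))$. I would prove that $\xi-\nabla v$ is $L^2$-orthogonal to every $Y$-periodic field that is solenoidal in $y$ and has zero $Y$-mean, and then invoke the Helmholtz (de Rham) characterization of periodic gradients on the torus. Concretely, I take $\Phi\in\mathcal{D}((0,T)\times\Omega;C^\infty_{per}(Y)^n)$ with $\mathrm{div}_y\Phi=0$ and $\mathcal{M}_Y(\Phi)=0$, pair $\mathcal{T}_\varepsilon(\nabla v_\varepsilon)$ against $\mathcal{T}_\varepsilon$ of the oscillating function $x\mapsto\Phi(t,x,x/\varepsilon)$, and use the integration relation of Proposition \ref{property} to return to the physical domain, obtaining (up to terms supported on $\Lambda_\varepsilon$) the quantity $\int_0^T\!\int_\Omega\nabla v_\varepsilon\cdot\Phi(t,x,x/\varepsilon)\,\de x\,\de t$. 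An integration by parts in $x$ transfers the gradient onto $\Phi$, producing $(\mathrm{div}_x\Phi)(t,x,x/\varepsilon)+\varepsilon^{-1}(\mathrm{div}_y\Phi)(t,x,x/\varepsilon)$; the potentially singular $\varepsilon^{-1}$ term vanishes by the choice $\mathrm{div}_y\Phi=0$, and passing to the limit gives $\int_0^T\!\int_\Omega\!\int_Y(\xi-\nabla v)\cdot\Phi\,\de y\,\de x\,\de t=0$. As this holds for all such $\Phi$, the field $\xi-\nabla v$ is a $y$-gradient, which furnishes $\widehat{v}$ and yields \eqref{8}.

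The step I expect to be the main obstacle is precisely this last identification of the two-scale gradient structure: one must justify the integration relation and the $\varepsilon$-level integration by parts rigorously, and in particular control the cut-off contributions on the boundary layer $\Lambda_\varepsilon$ (where $\mathcal{T}_\varepsilon$ is set to zero), showing they are negligible by combining $|\Lambda_\varepsilon|\to 0$ with the uniform $L^2$-bounds. The time dependence itself is harmless, since it only enters as a parameter through Definition \ref{defunf} and property (v), but it must be carried consistently through every weak-compactness and integration-by-parts step.
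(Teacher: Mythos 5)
Your proof is essentially correct, but note that the paper does not actually prove Theorem \ref{u1}: it is explicitly \emph{recalled} as a known result, with the argument deferred to \cite{Cio-Dam-Gri1} (and to \cite{Donato-Yang*}, \cite{Donato-Yang} for the version with time as a parameter). Measured against that reference proof, your route is genuinely different. The proof in \cite{Cio-Dam-Gri1} is constructive: it introduces the rescaled, mean-subtracted sequence $Z_\varepsilon=\varepsilon^{-1}\bigl(\mathcal{T}_{\varepsilon}(v_\varepsilon)-\mathcal{M}_{Y}(\mathcal{T}_{\varepsilon}(v_\varepsilon))\bigr)$, corrected by a term affine in $y$ so that $\nabla_y Z_\varepsilon$ coincides with $\mathcal{T}_{\varepsilon}(\nabla v_\varepsilon)$ up to its $Y$-mean; boundedness of $Z_\varepsilon$ then follows from the Poincar\'e--Wirtinger inequality on $Y$, the corrector $\widehat{v}$ is defined as its weak limit, and the $Y$-periodicity of $\widehat{v}$ must be established by a separate, somewhat delicate argument matching traces on opposite faces of the cell. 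You instead run the Nguetseng--Allaire two-scale argument inside the unfolding framework: your preliminary steps (uniform bounds, weak compactness, $\nabla_y v_0=0$ via $\nabla_y[\mathcal{T}_\varepsilon(v_\varepsilon)]=\varepsilon\,\mathcal{T}_\varepsilon(\nabla v_\varepsilon)$, and $\mathcal{M}_Y(\xi)=\nabla v$ via the averaging property) are all sound, and you identify $\xi-\nabla v$ as a $y$-gradient by orthogonality against solenoidal, zero-mean, $Y$-periodic test fields plus the Helmholtz/de Rham decomposition on the torus. The trade-off is clear: your approach gets the periodicity of $\widehat{v}$ for free, since the de Rham characterization already produces an element of $H_{per}^1(Y)/\Bbb R$, at the price of invoking de Rham; the reference proof avoids de Rham and exhibits $\widehat{v}$ as an explicit limit of computable quantities (which is what makes it useful for corrector estimates), but must work to prove periodicity. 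Finally, the two technical worries you flag are indeed harmless: the $\varepsilon^{-1}$ term in the integration by parts vanishes exactly because $\mathrm{div}_y\Phi=0$, and the boundary-layer contributions are negligible since a test function compactly supported in $\Omega$ misses $\Lambda_\varepsilon$ for small $\varepsilon$; in the special cuboid geometry adopted in this paper the layer is even empty (Remark \ref{remgeo}).
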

\begin{theorem}\label{u1*}
Let $v_\varepsilon$ be a sequence in $L^2(0,T;H^1(\Omega^\ast_\varepsilon))$ such that
\[
\|v_\varepsilon\|_{L^2(0,T;H^1(\Omega^\ast_\varepsilon))}\leq C,
 \]
with $C$ a positive constant independent of  $\varepsilon$. Then, there exist $v\in L^2(0,T;H^1(\Omega))$ and $\widehat{v}\in L^2 \left (0,T;L^{2}( \Omega,H_{per}^1(Y^\ast)/\Bbb R)\right)$ such that, up to a subsequence,
\begin{equation}
\left\{
\begin{array}{lll}
\mathcal{T}^\ast_{\varepsilon }\left( v_{\varepsilon }\right)
\rightharpoonup v & \text{weakly in} & L^2\left (0,T;L^{2}( \Omega,H^{1}\left(
Y^\ast\right) )\right), \\[2 mm]
\mathcal{T}^\ast_{\varepsilon }\left( \nabla v_{\varepsilon }\right)
\rightharpoonup \nabla v+\nabla _{y}\widehat{v}& \text{weakly in} &
L^2(0,T;L^{2}\left( \Omega \times Y^\ast\right)).
\end{array}%
\right.  \label{8}
\end{equation}
\end{theorem}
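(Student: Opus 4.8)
The plan is to reduce the perforated case to the already-established non-perforated Theorem~\ref{u1} by means of a uniformly bounded extension operator, and then to transport the resulting two-scale convergences from $Y$ to $Y^\ast$ through the identity \eqref{link}. Concretely, I would first extend each $v_\varepsilon$ to the whole of $\Omega$, apply Theorem~\ref{u1} to the extension, and finally exploit the fact that, on the perforated cell $Y^\ast$, the unfolding of the extension coincides with $\mathcal{T}^\ast_\varepsilon(v_\varepsilon)$.

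First I would invoke the classical extension result for periodically perforated domains (see \cite{Cio-Pau}, \cite{Hopker}, \cite{Bohm}, and \cite{Meirmanov}). Since $S$ is a connected closed strict subset of $\overline{Y}$ with Lipschitz boundary, there exists a linear continuous extension operator $P_\varepsilon:H^1(\Omega^\ast_\varepsilon)\to H^1(\Omega)$ satisfying $P_\varepsilon w=w$ in $\Omega^\ast_\varepsilon$ and $\|P_\varepsilon w\|_{H^1(\Omega)}\le C\|w\|_{H^1(\Omega^\ast_\varepsilon)}$ with $C$ independent of $\varepsilon$; the $\varepsilon$-uniformity is obtained by constructing the operator on the reference cell and rescaling. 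Treating time as a parameter, I set $u_\varepsilon(t,\cdot):=P_\varepsilon v_\varepsilon(t,\cdot)$ and integrate the pointwise-in-time estimate to conclude that $u_\varepsilon\in L^2(0,T;H^1(\Omega))$ with $\|u_\varepsilon\|_{L^2(0,T;H^1(\Omega))}\le C\|v_\varepsilon\|_{L^2(0,T;H^1(\Omega^\ast_\varepsilon))}\le C$.

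Next I would apply Theorem~\ref{u1} to the bounded sequence $u_\varepsilon$, obtaining $v\in L^2(0,T;H^1(\Omega))$ and $\widehat{v}\in L^2(0,T;L^2(\Omega,H^1_{per}(Y)/\mathbb{R}))$ such that, up to a subsequence, $\mathcal{T}_\varepsilon(u_\varepsilon)\rightharpoonup v$ and $\mathcal{T}_\varepsilon(\nabla u_\varepsilon)\rightharpoonup \nabla v+\nabla_y\widehat{v}$ in the stated spaces. Because $u_\varepsilon=v_\varepsilon$ on $\Omega^\ast_\varepsilon$, the remark following \eqref{link}, which is valid for any extension of a function from $\Omega^\ast_\varepsilon$ into $\Omega$ applied here both to $v_\varepsilon$ and componentwise to $\nabla v_\varepsilon$, yields $\mathcal{T}^\ast_\varepsilon(v_\varepsilon)=\mathcal{T}_\varepsilon(u_\varepsilon)_{|\Omega\times Y^\ast}$ and $\mathcal{T}^\ast_\varepsilon(\nabla v_\varepsilon)=\mathcal{T}_\varepsilon(\nabla u_\varepsilon)_{|\Omega\times Y^\ast}$. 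Since the restriction map from $\Omega\times Y$ to $\Omega\times Y^\ast$ is linear and continuous on $L^2$, it preserves weak convergence, so passing the two limits through the restriction gives $\mathcal{T}^\ast_\varepsilon(v_\varepsilon)\rightharpoonup v$ (the limit is independent of $y$, hence its restriction is again $v$) and $\mathcal{T}^\ast_\varepsilon(\nabla v_\varepsilon)\rightharpoonup \nabla v+\nabla_y\widehat{v}$, where $\widehat{v}$ is now read as its restriction to $Y^\ast$, an element of $L^2(0,T;L^2(\Omega,H^1_{per}(Y^\ast)/\mathbb{R}))$.

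The main obstacle is the construction of the extension operator $P_\varepsilon$ with $\varepsilon$-independent norm: this is precisely where the geometric hypotheses on $S$ (connectedness, compact containment in $Y$, Lipschitz boundary) are indispensable, and the uniform bound rests on the reference-cell-plus-rescaling argument rather than on any soft functional-analytic fact. A secondary, more technical point is to verify that the corrector inherited from Theorem~\ref{u1} restricts consistently to the quotient $H^1_{per}(Y^\ast)/\mathbb{R}$; since $\widehat{v}$ is only determined up to an additive constant per cell and the gradient limit $\nabla_y\widehat{v}$ is insensitive to that constant, the restriction is well defined and captures the full microscopic gradient on $Y^\ast$. Everything else, namely the weak compactness and the identification of the macroscopic part $v$ as an element of $L^2(0,T;H^1(\Omega))$, is supplied directly by Theorem~\ref{u1} applied to the extended sequence.
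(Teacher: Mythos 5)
Your argument is correct, but it is not the proof that stands behind Theorem \ref{u1*} as the paper uses it: the paper does not prove this statement at all, it recalls it as a known compactness result from the unfolding literature (\cite{Cio-Dam-Don-Gri-Zaki}, \cite{Donato-Yang*}), and the proof given in those references is intrinsic to the perforated domain, with no extension operator. There one works directly with $\mathcal{T}^\ast_\varepsilon(v_\varepsilon)$, splitting it into its cell average $\mathcal{M}_{Y^\ast}\left(\mathcal{T}^\ast_\varepsilon(v_\varepsilon)\right)$ and a rescaled oscillation, and uses the connectedness of $Y^\ast$ together with the Poincar\'e--Wirtinger inequality on the reference cell to show that the limit of the averages is an $H^1(\Omega)$ function and to construct the corrector $\widehat{v}$. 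Your route instead reduces everything to the fixed-domain Theorem \ref{u1} through a uniform extension operator and the identity \eqref{link} (which, as the paper remarks, holds for any extension, hence also for $\nabla v_\varepsilon$ versus $\nabla(P_\varepsilon v_\varepsilon)$, since these coincide a.e.\ on $\Omega^\ast_\varepsilon$); the remaining steps are all sound: restriction is a bounded linear map and hence weakly continuous, $v$ is independent of $y$ so its restriction is again $v$, and the restriction of $\widehat{v}$ is a legitimate element of $L^2((0,T)\times\Omega;H^1_{per}(Y^\ast)/\mathbb{R})$ because constants restrict to constants. What your approach buys is brevity. What it costs is generality: it hinges entirely on the existence of an $\varepsilon$-uniform extension operator on $H^1(\Omega^\ast_\varepsilon)$ without boundary conditions, which is available here only because $\Gamma$ is Lipschitz, $Y^\ast$ is connected, and $\Omega$ is a finite union of entire $\varepsilon$-cells (so that $\Lambda_\varepsilon=\emptyset$, Remark \ref{remgeo}); see \cite{Cio-Pau}, \cite{Bohm}, \cite{Hopker}. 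The intrinsic proof dispenses with this machinery, and avoiding extension operators is precisely the advantage of unfolding in perforated domains that the paper itself stresses in Remark \ref{rem3}. Note finally that the paper's own use of extensions, in Lemma \ref{lemma-conv}, serves a different purpose (upgrading weak to strong convergence of the unfolded concentrations), so your proof is consistent with the paper's framework even though it is not the canonical proof of this particular compactness theorem.
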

Using the same notation as in \cite{Cio-Dam-Don-Gri-Zaki} (see, also, \cite{Cabarrubias-Donato} and \cite{Donato-Yang}), let us give the following definition:
\begin{definition}\label{defunfbound}
Let $T>0$. For any function $\varphi$ which is Lebesgue measurable on $\Gamma_{\varepsilon}$, we define the boundary unfolding operator $\mathcal{T}^{b}_{\varepsilon}$ as follows:
\[
\mathcal{T}^{b}_{\varepsilon }\left( \varphi \right) \left(t, x,y\right)=\varphi \left(t, \varepsilon \left[ \dfrac{x}{\varepsilon }\right]%
_{Y}+\varepsilon y\right) \quad for \,\, a.e.\ \left(t,x,y\right) \in (0,T)\times \Omega \times \Gamma.
\]
\end{definition}
\begin{proposition}
\label{propertyb}Let $p,q\in \lbrack 1,+\infty \lbrack $ and $T>0$.  The operator $%
\mathcal{T}^b_{\varepsilon }$ is linear
and continuous from $L^q(0,T;L^p(\Gamma^\varepsilon))$ to $%
L^q(0,T;L^p(\Omega \times \Gamma)) $. Moreover,

\begin{enumerate}
\item[(i)] For every $\varphi\in L^q(0,T; L^{1}\left( \Gamma^\varepsilon)\right)$, one gets
\begin{equation}\label{intbound}
\dfrac{1}{\varepsilon |Y|}\int_{\Omega\times\Gamma}T^b_\varepsilon(\varphi)(t,x,y) \de x \de \sigma_y=\int_{\Gamma_\varepsilon}\varphi(t,x) \de \sigma_x,
\end{equation}
for a.e. $t\in (0,T)$.
\item[(ii)] For every $\varphi \in L^q(0,T;L^{p}\left(  \Gamma^\varepsilon)\right) $, one gets
\begin{equation}\label{normbound}
\|T^b_\varepsilon(\varphi)\|_{L^q(0,T;L^{p}\left( \Omega\times\Gamma\right) )}\leq |Y|^{1/p}\varepsilon^{1/p}\|\varphi\|_{L^q(0,T;L^{p}\left( \Gamma^\varepsilon\right) )}.
\end{equation}
\end{enumerate}
\end{proposition}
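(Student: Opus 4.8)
The operator is manifestly linear, so the substance of the statement is the integration identity (i), from which both the norm relation in (ii) and the asserted continuity follow. Since in Definition \ref{defunfbound} the time variable $t$ enters only as a parameter, the plan is to fix $t\in(0,T)$ throughout and work with the purely spatial boundary unfolding; the $L^q$-in-time conclusions then follow by raising the resulting spatial identities to the appropriate power and integrating over $(0,T)$.

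For (i), I would decompose the integration domain $\Omega$ into the union of the complete $\varepsilon$-cells $\varepsilon(\mathbf{k}+Y)$, $\mathbf{k}\in K_\varepsilon$, forming $\widehat{\Omega}_\varepsilon$, together with the boundary layer $\Lambda_\varepsilon$ on which $\mathcal{T}^b_\varepsilon(\varphi)$ is taken to vanish, consistently with the convention used for $\mathcal{T}^\ast_\varepsilon$. On a fixed cell, $[x/\varepsilon]_Y=\mathbf{k}$ is constant, so $\mathcal{T}^b_\varepsilon(\varphi)(t,x,y)=\varphi(t,\varepsilon\mathbf{k}+\varepsilon y)$ does not depend on $x$; integrating in $x$ over the cell therefore produces the volume factor $|\varepsilon(\mathbf{k}+Y)|=\varepsilon^n|Y|$. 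This reduces the left-hand side of \eqref{intbound} to $\varepsilon^n|Y|\sum_{\mathbf{k}\in K_\varepsilon}\int_\Gamma\varphi(t,\varepsilon(\mathbf{k}+y))\,\de\sigma_y$.

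The key computation is the change of variables $\xi=\varepsilon(\mathbf{k}+y)$ on each copy of $\Gamma$, which maps $\Gamma$ onto $\varepsilon\Gamma_{\mathbf{k}}=\varepsilon(\mathbf{k}+\Gamma)$; since this scales an $(n-1)$-dimensional surface by $\varepsilon$, the surface measure transforms by $\de\sigma_\xi=\varepsilon^{n-1}\,\de\sigma_y$. Hence $\int_\Gamma\varphi(t,\varepsilon(\mathbf{k}+y))\,\de\sigma_y=\varepsilon^{1-n}\int_{\varepsilon\Gamma_{\mathbf{k}}}\varphi(t,\xi)\,\de\sigma_\xi$, and the two powers of $\varepsilon$ combine to $\varepsilon^n|Y|\cdot\varepsilon^{1-n}=\varepsilon|Y|$. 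Summing over $\mathbf{k}\in K_\varepsilon$ and using that $\Gamma^\varepsilon$ is the disjoint union of the $\varepsilon\Gamma_{\mathbf{k}}$ then yields exactly $\varepsilon|Y|\int_{\Gamma^\varepsilon}\varphi(t,\cdot)\,\de\sigma_x$, which is \eqref{intbound}. The one point requiring care here is the correct bookkeeping of the Jacobian factors together with the treatment of $\Lambda_\varepsilon$; this is the main, though mild, obstacle, since everything else is a direct splitting of the domain into cells.

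Finally, for (ii), I would apply the identity just proven with $|\varphi|^p$ in place of $\varphi$, using that $\mathcal{T}^b_\varepsilon$ commutes with the pointwise map $s\mapsto|s|^p$, as it acts only by composition with a change of the spatial argument, so that $\mathcal{T}^b_\varepsilon(|\varphi|^p)=|\mathcal{T}^b_\varepsilon(\varphi)|^p$. This gives, for a.e.\ $t$, the spatial identity $\|\mathcal{T}^b_\varepsilon(\varphi)(t,\cdot,\cdot)\|^p_{L^p(\Omega\times\Gamma)}=\varepsilon|Y|\,\|\varphi(t,\cdot)\|^p_{L^p(\Gamma^\varepsilon)}$. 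Taking the $p$-th root, raising to the power $q$, and integrating over $(0,T)$ produces $\|\mathcal{T}^b_\varepsilon(\varphi)\|_{L^q(0,T;L^p(\Omega\times\Gamma))}=|Y|^{1/p}\varepsilon^{1/p}\|\varphi\|_{L^q(0,T;L^p(\Gamma^\varepsilon))}$, so that \eqref{normbound} in fact holds with equality. The finiteness of this right-hand side establishes that $\mathcal{T}^b_\varepsilon$ maps $L^q(0,T;L^p(\Gamma^\varepsilon))$ continuously into $L^q(0,T;L^p(\Omega\times\Gamma))$, which, combined with the evident linearity, completes the proof.
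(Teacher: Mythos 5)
Your proof is correct. Note that the paper itself contains no proof of Proposition \ref{propertyb}: it is recalled, without proof, from the unfolding literature (\cite{Cio-Dam-Don-Gri-Zaki}, \cite{Cabarrubias-Donato}, \cite{Donato-Yang}), and your argument --- decomposing $\Omega$ into the cells $\varepsilon(\mathbf{k}+Y)$, $\mathbf{k}\in K_\varepsilon$, using that $\mathcal{T}^b_\varepsilon(\varphi)(t,\cdot,y)$ is constant on each cell, and rescaling the surface measure via $\de \sigma_\xi=\varepsilon^{n-1}\de \sigma_y$ --- is exactly the standard proof given in those references. Two small remarks: in this paper's geometry the boundary layer $\Lambda_\varepsilon$ is empty (Remark \ref{remgeo}), so your zero-extension convention is vacuous and \eqref{intbound} is an exact identity (in general geometries one must also use that $\Gamma^\varepsilon$ is by construction a union over the interior cells $K_\varepsilon$ only); and your computation actually gives \eqref{normbound} with equality, which is slightly stronger than the stated inequality and is what makes the continuity claim immediate.
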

\begin{remark}\label{remgeo}
We shall be interested in working with these unfolding operators only for our particular form of the domain $\Omega$ (see Section \ref{Sec2}). Hence, for such a geometry, it holds $\widehat{\Lambda}_\varepsilon=\widehat{\Lambda}^\ast_\varepsilon=\emptyset$.
\end{remark}
\subsection{Weak and strong convergence results}
Under the assumptions we imposed on the geometry and on the data, we can use extensions for time-dependent functions to the whole of the domain $\Omega$. Following \cite{Bohm} and \cite{Hopker} (see, also, \cite{Acerbi}, \cite{Cio-Pau}, \cite{Gahn}, and \cite{Meirmanov}), in our geometry, for $i\in \{1,2,3\}$, there exists a linear and bounded extension operator ${\cal L}_i^\varepsilon:
L^2(0,T; H^1_{\partial \Omega}(\Omega^\ast_\varepsilon)) \rightarrow L^2(0,T; H_0^1(\Omega))$. We denote
\begin{equation}\label{extension}
{\cal L}_i^\varepsilon (c_i^\varepsilon) =\overline{c}_i^\varepsilon.
\end{equation}
We remark that the above linear and bounded extension operator to the whole of $\Omega$ preserves the non-negativity,
the essential boundedness and the {\it a priori} estimates \eqref{estimconcLinf}-\eqref{estimconcb} obtained for the solution $(c_{1}^{\varepsilon}, c_{2}^{\varepsilon}, c_{3}^{\varepsilon})$. Moreover, as in \cite{Gra-Pet} and \cite{Graf}, it follows that $$\overline{c}_i^\varepsilon  \in L^2((0,T), H_0^1(\Omega))\cap H^1((0,T),(H^1_0(\Omega))^{'})\cap L^\infty((0,T)\times \Omega),$$ with bounds independent of $\varepsilon$, and there exists $c'_i\in L^2(0,T; H_0^1 (\Omega))$ such that, for $i\in \{1,2,3\}$,
\begin{equation}\label{convext}
\overline{c}_i^\varepsilon  \rightarrow c'_i\,\,\,\,\text{strongly in }L^2((0, T) \times \Omega).
\end{equation}

Let us fix $i\in\{1,2,3\}$. For the function $c^\varepsilon_i\in \mathcal{W}\left(0,T;H^{1}_{\partial \Omega}(\Omega^{\ast}_{\varepsilon}),(H^{1}_{\partial \Omega}(\Omega^{\ast}_{\varepsilon}))'\right)$, we consider the time derivative $\partial_t \tilde{c}_i^\varepsilon \in L^2(0,T;(H_0^1(\Omega))')$ of the extension by zero of $c_i^\varepsilon$. It is obvious that the generalized time derivative of $\tilde{c}_i^\varepsilon$ exists and it holds
\begin{equation}\label{dergen}
\langle\partial_{t}\tilde{c}_{i}^{\varepsilon}(t),v\rangle_{\Omega}=\langle\partial_{t}c_{i}^{\varepsilon}(t),v|_{\Omega^{\ast}_{\varepsilon}}\rangle_{\Omega^{\ast}_{\varepsilon}}\,\,\forall v\in H_0^1(\Omega)\text{ and a.e. }t\in(0,T),
\end{equation}
which implies
\begin{equation}\label{dergenbis}
\|\partial_t \tilde{c}_i^\varepsilon\|_{L^2(0,T; (H_0^1(\Omega))')}\leq \|\partial_t c_i^\varepsilon\|_{L^2(0,T; (H_0^1(\Omega^{\ast}_\varepsilon))')} .
\end{equation}
In the next lemma, we collect the main compactness results we have for the solution of our microscopic problem \eqref{eqmicro} obtained by using the properties of the time-dependent unfolding operator $\mathcal{T}^\ast_{\varepsilon}$
for perforated domains recalled in Section \ref{sub2.3} and the {\it a priori} estimates proved in Lemma \ref{lemest}.
\begin{lemma}\label{lemma-conv}
Let $(c_1^\varepsilon, c_2^\varepsilon, c_3^\varepsilon)$ be the unique solution of problem \eqref{weakconcentration}-\eqref{weakinitial}. Then, up to a subsequence, there exist $c$ and $c_3\in L^2( 0,T; H_0^{1}(\Omega))$,
$\widehat{c}_i\in L^2((0,T) \times \Omega; H^1_{\textrm{per}}(Y^\ast)/\mathbb R)$, with $i\in \{1,2,3\}$,
such that, for $\varepsilon \rightarrow 0$, we have
\begin{equation}\label{conv-genc12}
\left\{
\begin{array}{lll}
i)&\mathcal{T}^\ast_{\varepsilon } (c^{\varepsilon }_i) \rightharpoonup c& \textrm{weakly in } L^2((0,T) \times \Omega, H^1(Y^\ast)),\\
\\
ii)&\mathcal{T}^\ast_{\varepsilon } (\nabla c^{\varepsilon }_i) \rightharpoonup \nabla c+ \nabla_y \widehat{c}_i
& \textrm{weakly  in } L^2((0,T) \times \Omega \times Y^\ast),\\
\\
iii)&\mathcal{T}^\ast_{\varepsilon } (c_i^\varepsilon) \rightarrow c & \textrm{strongly in } L^2((0,T)\times \Omega\times Y^\ast),\\
\\
iv)&{\cal T}_\varepsilon^b (c^\varepsilon_i) \rightarrow c& \textrm{strongly in } L^2((0,T)\times \Omega \times \Gamma),\\
\\
v)&\partial_t \tilde{c}_i^\varepsilon \rightharpoonup \vert Y^* \vert \partial_t c &  \textrm{weakly in } L^2(0,T; (H_0^1(\Omega))'),
\end{array}
\right.
\end{equation}
for $i\in\{1,2\}$ and
\begin{equation}\label{conv-genc3}
\left\{
\begin{array}{lll}
i)&\mathcal{T}^\ast_{\varepsilon } (c^{\varepsilon }_3) \rightharpoonup c_3& \textrm{weakly in } L^2((0,T) \times \Omega, H^1(Y^\ast)),\\
\\
ii)&\mathcal{T}^\ast_{\varepsilon } (\nabla c^{\varepsilon }_3) \rightharpoonup \nabla c_3+ \nabla_y \widehat{c}_3
& \textrm{weakly  in } L^2((0,T) \times \Omega \times Y^\ast),\\
\\
iii)&\mathcal{T}^\ast_{\varepsilon } (c_3^\varepsilon) \rightarrow c_3 & \textrm{strongly in } L^2((0,T)\times \Omega\times Y^\ast),\\
\\
iv)&{\cal T}_\varepsilon^b (c^\varepsilon_3) \rightarrow c_3& \textrm{strongly in } L^2((0,T)\times \Omega \times \Gamma),\\
\\
v)&\partial_t \tilde{c}_3^\varepsilon \rightharpoonup \vert Y^* \vert \partial_t c_3 &  \textrm{weakly in } L^2(0,T; (H_0^1(\Omega))').
\end{array}
\right.
\end{equation}

\end{lemma}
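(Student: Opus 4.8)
The plan is to combine the \emph{a priori} estimates of Lemma \ref{lemest} with the compactness theory of the unfolding operator, treating the three concentrations on essentially equal footing and only at the end using the special boundary estimate \eqref{estimdifconc} to merge the limits of $c_1^\varepsilon$ and $c_2^\varepsilon$. Since the geometry gives $\widehat{\Lambda}^\ast_\varepsilon=\emptyset$ (Remark \ref{remgeo}), the integral identities of Proposition \ref{property} hold without boundary-layer corrections, so the unfolding computations are exact.

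First, for each fixed $i$, the bound \eqref{estimconcH1} together with Theorem \ref{u1*} produces, up to a subsequence, a limit $c_i'\in L^2(0,T;H^1(\Omega))$ and a corrector $\widehat{c}_i\in L^2((0,T)\times\Omega;H^1_{\mathrm{per}}(Y^\ast)/\mathbb R)$ yielding the weak convergences (i) and (ii); note the correctors stay distinct for the three indices. For the strong convergence (iii), I would invoke the extension operators: since $\overline{c}_i^\varepsilon\to c_i'$ strongly in $L^2((0,T)\times\Omega)$ by \eqref{convext}, Proposition \ref{convergence}(ii) together with the identity $\mathcal{T}^\ast_\varepsilon(c_i^\varepsilon)=\mathcal{T}_\varepsilon(\overline{c}_i^\varepsilon)|_{\Omega\times Y^\ast}$ from \eqref{link} gives $\mathcal{T}^\ast_\varepsilon(c_i^\varepsilon)\to c_i'$ strongly in $L^2((0,T)\times\Omega\times Y^\ast)$. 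Uniqueness of the limit identifies this $c_i'$ with the weak limit already found and places it in $L^2(0,T;H_0^1(\Omega))$, the Dirichlet condition being inherited from the extension.

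Next comes the boundary convergence (iv), which is the delicate step. The key observation is that $\mathcal{T}^b_\varepsilon(c_i^\varepsilon)$ is the trace on $\Gamma$ of $\mathcal{T}^\ast_\varepsilon(c_i^\varepsilon)$. Using property (iii) of the unfolding operator, $\nabla_y\mathcal{T}^\ast_\varepsilon(c_i^\varepsilon)=\varepsilon\,\mathcal{T}^\ast_\varepsilon(\nabla c_i^\varepsilon)\to 0$ strongly in $L^2$, so combined with the strong convergence above we upgrade to $\mathcal{T}^\ast_\varepsilon(c_i^\varepsilon)\to c_i'$ strongly in $L^2((0,T)\times\Omega;H^1(Y^\ast))$; continuity of the trace map $H^1(Y^\ast)\to L^2(\Gamma)$ then gives $\mathcal{T}^b_\varepsilon(c_i^\varepsilon)\to c_i'$ strongly in $L^2((0,T)\times\Omega\times\Gamma)$, the trace of the $y$-independent $c_i'$ being $c_i'$ itself. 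At this point I would invoke \eqref{estimdifconc}: by the norm estimate \eqref{normbound}, $\|\mathcal{T}^b_\varepsilon(c_1^\varepsilon-c_2^\varepsilon)\|_{L^2((0,T)\times\Omega\times\Gamma)}\le |Y|^{1/2}\varepsilon^{1/2}\,\|c_1^\varepsilon-c_2^\varepsilon\|_{L^2(0,T;L^2(\Gamma^\varepsilon))}\le C\varepsilon\to 0$, so the two boundary limits must coincide; since $c_1'$ and $c_2'$ do not depend on $y$, this forces $c_1'=c_2'=:c$ in $L^2((0,T)\times\Omega)$, which is precisely why $c_1^\varepsilon$ and $c_2^\varepsilon$ share the same macroscopic limit.

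Finally, for (v) I would use \eqref{dergenbis} and the estimates \eqref{estimconcH1'}, \eqref{estimconc3H1'} to bound $\partial_t\tilde{c}_i^\varepsilon$ in $L^2(0,T;(H_0^1(\Omega))')$, extract a weak limit, and identify it: Proposition \ref{convergence}(iii) gives $\tilde{c}_i^\varepsilon\rightharpoonup|Y^\ast|\,c$ weakly in $L^2((0,T)\times\Omega)$ (recall $|Y|=1$ and $\mathcal{M}_{Y^\ast}(c)=c$), and since distributional time differentiation is continuous for weak convergence, testing against products $\phi(t)v(x)$ with $\phi\in C_c^\infty(0,T)$ and $v\in H_0^1(\Omega)$ identifies the weak limit as $|Y^\ast|\partial_t c$. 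The argument for $c_3^\varepsilon$ in \eqref{conv-genc3} is identical, except that no merging is required since $c_3$ carries its own limit. The main obstacle is the boundary step (iv) combined with the merging: one must first upgrade the $L^2(\Omega\times Y^\ast)$ strong convergence to strong convergence in $L^2(\Omega;H^1(Y^\ast))$ so that traces pass to the limit, and only then can the scaling $C\sqrt{\varepsilon}$ in \eqref{estimdifconc}, amplified by the $\sqrt{\varepsilon}$ in \eqref{normbound}, be exploited to conclude $c_1=c_2$.
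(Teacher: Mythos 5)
Your proof is correct and follows the paper's own argument almost step for step: the same use of Lemma \ref{lemest} and Theorem \ref{u1*} for the weak limits (i)--(ii), the same identification of the strong interior limit (iii) through the extension operators \eqref{extension}, the strong convergence \eqref{convext} and the identity \eqref{link} (you cite Proposition \ref{convergence}(ii) where the paper cites Theorem \ref{u1} and then restricts to $\Omega\times Y^\ast$, but these are the same computation), the same triangle-inequality argument combining \eqref{estimdifconc} with \eqref{normbound} to force $c_1=c_2$, and the same integration-by-parts identification of the time-derivative limit via Proposition \ref{convergence}(iii). The one place where you genuinely deviate is the boundary convergence (iv): the paper disposes of it by invoking \cite[Proposition 12]{Gahn}, whereas you prove it directly, observing that $\nabla_y\mathcal{T}^\ast_\varepsilon(c_i^\varepsilon)=\varepsilon\,\mathcal{T}^\ast_\varepsilon(\nabla c_i^\varepsilon)\to 0$ strongly (by the gradient bound \eqref{estimconcH1} and Proposition \ref{property}(i)), so that the interior strong convergence upgrades to strong convergence in $L^2((0,T)\times\Omega;H^1(Y^\ast))$, and then applying the continuity of the trace $H^1(Y^\ast)\to L^2(\Gamma)$ together with the fact that $\mathcal{T}^b_\varepsilon$ is the trace of $\mathcal{T}^\ast_\varepsilon$ on $\Gamma$ --- a fact the paper itself uses later when identifying $U$ in the proof of Theorem \ref{teounf}. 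This self-contained argument is sound, and it is legitimate in this setting because Remark \ref{remgeo} ensures there is no boundary layer, so every unfolded function is cell by cell an honest $H^1(Y^\ast)$ function; your write-up thus replaces the paper's external citation with an explicit proof, which is, if anything, a gain in completeness.
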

\begin {proof}
Let us fix $i\in\{1,2,3\}$. By \eqref{estimconcH1} of Lemma \ref{lemest} and Theorem \ref{u1*}, it follows that  there exist $c_i\in L^2( 0,T; H_0^{1}(\Omega))$ and
$\widehat{c}_i\in L^2((0,T) \times \Omega; H^1_{\textrm{per}}(Y^\ast)/\mathbb R)$ such that, up to a subsequence still denoted by $\varepsilon$, the following convergences hold
\begin{equation}\label{conv1}
\left\{
\begin{array}{lll}
i)&\mathcal{T}^\ast_{\varepsilon } (c^{\varepsilon }_i) \rightharpoonup c_i& \textrm{weakly in } L^2((0,T) \times \Omega, H^1(Y^\ast)),\\
\\
ii)&\mathcal{T}^\ast_{\varepsilon } (\nabla c^{\varepsilon }_i) \rightharpoonup \nabla c_i+ \nabla_y \widehat{c}_i
& \textrm{weakly  in } L^2((0,T) \times \Omega \times Y^\ast).\\
\end{array}
\right.
\end{equation}
On the other hand, by Theorem \ref{u1} and \eqref{convext}, we get
\begin{equation*}
T_\varepsilon(\overline{c}_i^\varepsilon)\to c'_i \quad{ \rm in }\quad L^2((0,T)\times\Omega\times Y),
\end{equation*}
which implies
\begin{equation*}
T_\varepsilon(\overline{c}_i^\varepsilon)_{|\Omega\times Y^\ast} \to (c'_i)_{|\Omega\times Y^\ast}=c'_i\quad{ \rm in }\quad L^2((0,T)\times\Omega\times Y^\ast),
\end{equation*}
since $c'_i$ doesn't depends on $y$. On the other hand, by \eqref{link} it holds
$$
T_\varepsilon(\overline{c}_i^\varepsilon)_{|\Omega\times Y^\ast}=T^{\ast}_\varepsilon(c_i^\varepsilon).
$$
Hence, we can deduce that
\begin{equation*}
T^{\ast}_\varepsilon(c_i^\varepsilon) \to c'_i \quad{\rm in }\quad L^2((0,T)\times\Omega\times Y^\ast).
\end{equation*}
Due to \eqref{conv1}i), by uniqueness $c'_i=c_i$ and we get indeed
\begin{equation} \label{strong-domain}
\mathcal{T}^\ast_{\varepsilon } (c_i^\varepsilon) \rightarrow c_i \quad \textrm{strongly in  }
L^2((0,T)\times \Omega\times Y^\ast).
\end{equation}
Moreover, by Lemma \ref{lemest}, acting as in \cite[Proposition 12]{Gahn}, we get
\begin{equation} \label{strong-boundary}
\mathcal{T}^{b}_{\varepsilon }  (c_i^\varepsilon) \rightarrow c_i \quad \textrm{strongly in } L^2((0,T)\times \Omega \times \Gamma).
\end{equation}
We remark that, for $i\in \{1,2,3\}$, $c_i\in \mathcal{W}\left(0,T;H_0^1(\Omega), (H_0^1(\Omega))'\right)$.
Now it remains to prove that $c_1=c_2=c$ in $(0,T)\times \Omega$. To this, let us observe that, by \eqref{estimdifconc} and \eqref{normbound}, we get
\begin{equation}\label{c1=c2}
\begin{array}{l}
\|{\cal T}_\varepsilon^b (c^\varepsilon_2)-c_1\|_{L^2((0,T)\times\Omega\times\Gamma)}\leq \|{\cal T}_\varepsilon^b (c^\varepsilon_2)-{\cal T}_\varepsilon^b (c^\varepsilon_1)\|_{L^2((0,T)\times\Omega\times\Gamma)}+\|{\cal T}_\varepsilon^b (c^\varepsilon_1)-c_1\|_{L^2((0,T)\times\Omega\times\Gamma)}\\
\\
\leq \sqrt\varepsilon\|c_2^\varepsilon-c_1^\varepsilon\|_{L^2((0,T)\times\Gamma_\varepsilon)}+\|{\cal T}_\varepsilon^b (c^\varepsilon_1)-c_1\|_{L^2((0,T)\times\Omega\times\Gamma)}\\
\\
\leq C\varepsilon+\|{\cal T}_\varepsilon^b (c^\varepsilon_1)-c_1\|_{L^2((0,T)\times\Omega\times\Gamma)}.
\end{array}
\end{equation}
Hence, when $\varepsilon$ tends to zero, by \eqref{strong-boundary} it holds
\begin{equation*}
\mathcal{T}^{b}_{\varepsilon }  (c_2^\varepsilon) \rightarrow c_1\quad \textrm{strongly in } L^2((0,T)\times \Omega \times \Gamma),
\end{equation*}
which means that $c_1=c_2$ in $(0,T)\times \Omega \times \Gamma$. Since $c_1$ and $c_2$ are independent of $y$, we obtain $c_1=c_2$ in the whole $(0,T)\times \Omega$.\\
By  \eqref{estimconcH1'} and \eqref{dergenbis}, there exists $W_i \in L^2(0,T; (H_0^1(\Omega))')$ such that, up to a subsequence, we obtain
\begin{equation}
\partial_t \tilde{c}_i^\varepsilon \rightharpoonup W_i \quad
\textrm{weakly in } L^2(0,T; (H_0^1(\Omega))').
\end{equation}
An easy integration by parts, Proposition \ref{convergence}(iii) and \eqref{conv1} show that $W_i=\dfrac{\vert Y^\ast \vert}{\vert Y\vert} \partial_t c_i $. Hence,
\begin{equation}\label{derconv}
\partial_t \tilde{c}_i^\varepsilon \rightharpoonup \dfrac{\vert Y^\ast \vert}{\vert Y\vert}  \partial_t c_i \quad
\textrm{weakly in } L^2(0,T; (H_0^1(\Omega))').
\end{equation}
Finally, \eqref{conv1}, \eqref{strong-domain}, \eqref{strong-boundary}, and \eqref{derconv} imply \eqref{conv-genc12} for $i\in\{1,2\}$ and \eqref{conv-genc3} for $i=3$.
\end{proof}
\begin{remark}\label{rem3}
In order to use classical compactness results and to obtain convergences of the microscopic solution, we are forced to extend, at first, the functions $(c_{1}^{\varepsilon}, c_{2}^{\varepsilon}, c_{3}^{\varepsilon})$ to the whole domain $\Omega$ by means of a suitable uniform extension operator before unfolding. Indeed, by using the a priori uniform estimate \eqref{estimconcH1} of Lemma \ref{lemest} and Theorem \ref{u1*} we could get only weak convergences of $c_i^\varepsilon$  and $T^\ast_\varepsilon(c_i^\varepsilon)$, $i\in\{1,2,3\}$. Moreover, due to the less regularity of the time derivative, these weak convergences can't be improved unlike in \cite{Donato-Yang*}. When handling nonlinear terms, as in our paper, weak convergence isn't enough, but we need also strong convergence with respect to suitable topologies. More generally, we can deduce that when dealing with the homogenization by unfolding in a perforated domain, if there exists a classical uniform extension operator, it is like we could act directly with $T^\ast_\varepsilon$.
On the other hand, if we cannot construct such a uniform extension operator, due to some particular reasons (for example, the lack of regularity of the boundary of the holes), we can homogenize as well (this is the main advantage of the unfolding), but, in the presence of nonlinear terms, we are forced to prove a convergence like \eqref{strong-domain}.
\end{remark}

\subsection{The macroscopic model}
The main convergence result of this paper is stated in the next theorem, where we take into account that $\vert Y\vert=1$.
\begin{theorem}\label{teounf} Let $(c_{1}^{\varepsilon}, c_{2}^{\varepsilon}, c_{3}^{\varepsilon})$ be the solution of system  \eqref{weakconcentration}-\eqref{weakinitial}. Then, under the assumptions $(\mathbf{H}_1)\div (\mathbf{H})_7$, there exist $c$ and $c_{3}\in
L^{2}(0,T;H_0^{1}(\Omega))$ and $\widehat{c}_{i}\in L^{2}((0,T)\times\Omega;H_{per}^{1}(Y^\ast)/\Bbb R)$, $i\in \{1,2,3\}$, such that
\begin{equation} \label{unfold-convconc}%
\left\{
\begin{array}{lll}
i)&\mathcal{T}^{\ast}_{\varepsilon}(c_{i}^{\varepsilon})\rightharpoonup c &\text{\rm weakly  in }L^{2}((0,T)\times\Omega;H^{1}(Y^\ast)),\\[2mm]
ii)&\mathcal{T}^{\ast}_{\varepsilon}(\nabla c_{i}^{\varepsilon})\rightharpoonup\nabla c+\nabla_{y}\widehat{c}_{i}&\text{\rm weakly in }L^{2}((0,T)\times
\Omega\times Y^\ast),\\[2mm]
iii)&\mathcal{T}^\ast_{\varepsilon } (c_i^\varepsilon) \rightarrow c & \text{\rm strongly in } L^2((0,T)\times \Omega\times Y^\ast),
\end{array}
\right.
\end{equation}
for $i\in\{1,2\}$ and
\begin{equation} \label{unfold-convconc3}%
\left\{
\begin{array}{lll}
i)&\mathcal{T}^{\ast}_{\varepsilon}(c_{3}^{\varepsilon})\rightharpoonup c_{3}&\text{\rm weakly  in }L^{2}((0,T)\times\Omega;H^{1}(Y^\ast)),\\[2mm]
ii)&\mathcal{T}^{\ast}_{\varepsilon}(\nabla c_{3}^{\varepsilon})\rightharpoonup\nabla c_{3}+\nabla_{y}\widehat{c}_{3}&\text{\rm weakly in }L^{2}((0,T)\times
\Omega\times Y^\ast),\\[2mm]
iii)&\mathcal{T}^\ast_{\varepsilon } (c_3^\varepsilon) \rightarrow c_3 & \text{\rm strongly in } L^2((0,T)\times \Omega\times Y^\ast),
\end{array}
\right.
\end{equation}
where $(c, \widehat{c_1}, \widehat{c}_{2}, c_3, \widehat{c}_{3})$ is the unique solution of the following problem

\begin{equation}\label{probunf}
\left\{
\begin{array}{l}
\text{\rm Find } (c, c_3)\in (L^{2}(0,T;H_0^{1}(\Omega)))^2 \ {\text{\rm  and   }} \widehat{c}_{i}\in L^{2}((0,T)\times\Omega;H_{per}^{1}(Y^\ast)/\Bbb R),\,i\in \{1,2,3\}, \,\text{\rm  such that  } \\
\\
2\displaystyle \vert Y^\ast \vert \, \langle\partial_t c, \varphi\rangle_{\Omega}+
\displaystyle \int_{\Omega\times Y^{\ast}}D_1(y)(\nabla c+\nabla_{y}\widehat{c_{1}})(\nabla\varphi+\nabla_{y}\Psi_1) \de x \de y\\
\\
+\displaystyle \int_{\Omega\times Y^{\ast}}D_2(y)(\nabla c+\nabla_{y}\widehat{c_{2}})(\nabla\varphi+\nabla_{y}\Psi_2)\,\mathrm{d}x\mathrm{d}y +\displaystyle \int_{\Omega\times\Gamma} (\widehat{c_{1}}-\widehat{c_{2}}) H(c_3) (\Psi_1-\Psi_2) \de x \de \sigma_{y}\\
\\
\displaystyle = \int_{\Omega\times Y^{\ast}}F_{1}(y,c,c, c_{3})\varphi \de x \de y+\int_{\Omega\times Y^{\ast}}F_{2}(y,c,c, c_{3})\varphi \de x \de y,\\
\\
\displaystyle \vert Y^\ast \vert \, \langle\partial_t c_3, \varphi\rangle_{\Omega}+
\displaystyle \int_{\Omega\times Y^{\ast}}D_3(y)(\nabla c_{3}+\nabla_{y}\widehat{c_{3}})(\nabla\varphi+\nabla_{y}\Psi_3) \de x \de y\\
\\
\displaystyle=\int_{\Omega\times\Gamma}G_{3}(y,c,c,c_{3})\varphi \de x \de\sigma_{y}+
\int_{\Omega\times Y^{\ast}}F_{3}(y,c,c, c_{3})\varphi \de x \de y,\\
\\
\text{\rm in  }\mathcal{D}'(0,T)\,\text{\rm and for all } \varphi\in H_0^{1}(\Omega),\ \Psi_i\in L^2(\Omega
;H_{per}^{1}(Y^\ast)),\\
\\
c(x,0)=(c_1^0+c_2^0)/2 \,\,\,\,\,\,\text {\rm in } \Omega,\\
\\
c_3(x,0)=c_3^0\,\,\,\,\,\,\,\,\,\,\,\,\,\,\,\,\,\,\,\,\,\,\,\,\,\,\text{ \rm in } \Omega.
\end{array}
\right.
\end{equation}
\end{theorem}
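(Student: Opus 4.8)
The plan is to pass to the limit $\varepsilon \to 0$ in the weak formulation \eqref{weakconcentration}, using the compactness already collected in Lemma \ref{lemma-conv}. Indeed, the convergences \eqref{unfold-convconc} and \eqref{unfold-convconc3} are exactly \eqref{conv-genc12}(i)--(iii) and \eqref{conv-genc3}(i)--(iii), so only the identification of the limit system \eqref{probunf} and its uniqueness remain to be proved. For the first two equations I would test the microscopic equation for $c_1^\varepsilon$ with $v_1^\varepsilon = \varphi(t,x) + \varepsilon\psi(t,x)\Psi_1(x, x/\varepsilon)$ and the one for $c_2^\varepsilon$ with $v_2^\varepsilon = \varphi(t,x) + \varepsilon\psi(t,x)\Psi_2(x, x/\varepsilon)$, where $\varphi\in H_0^1(\Omega)$ and $\Psi_i\in L^2(\Omega;H^1_{per}(Y^\ast))$, and then add the two resulting identities; for the third equation I would use $v = \varphi + \varepsilon\psi\Psi_3$. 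The decisive structural fact is that $G_1 + G_2 \equiv 0$ by \eqref{formG}, so the leading (macroscopic) part $\varphi$ of the two test functions produces no net surface contribution, and the $\tfrac1\varepsilon$-scaled coupling survives only through the correctors.

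For the diffusion terms I would invoke Proposition \ref{property}(iv) together with property (iii) of $\mathcal{T}^\ast_\varepsilon$, writing $\int_{\Omega^\ast_\varepsilon} D_i^\varepsilon\nabla c_i^\varepsilon\cdot\nabla v_i^\varepsilon\,\de x \backsimeq \int_{\Omega\times Y^\ast}\mathcal{T}^\ast_\varepsilon(D_i^\varepsilon)\,\mathcal{T}^\ast_\varepsilon(\nabla c_i^\varepsilon)\cdot\mathcal{T}^\ast_\varepsilon(\nabla v_i^\varepsilon)$, and using $\mathcal{T}^\ast_\varepsilon(D_i^\varepsilon) = D_i(y)$, the weak limit \eqref{unfold-convconc}(ii), and $\mathcal{T}^\ast_\varepsilon(\nabla v_i^\varepsilon) \to \nabla\varphi + \nabla_y\Psi_i$; a weak--strong product then yields the cell-integrals of \eqref{probunf}. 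The time terms pass to the limit through \eqref{conv-genc12}(v) and \eqref{conv-genc3}(v), giving the factors $2\vert Y^\ast\vert\,\partial_t c$ and $\vert Y^\ast\vert\,\partial_t c_3$ (recall $\vert Y\vert = 1$). For the volume reaction terms and the surface term carrying $G_3$ I would use the strong convergences \eqref{unfold-convconc}(iii), \eqref{unfold-convconc3}(iii), together with the boundary convergences $\mathcal{T}^b_\varepsilon(c_i^\varepsilon)\to c$ and $\mathcal{T}^b_\varepsilon(c_3^\varepsilon)\to c_3$ from Lemma \ref{lemma-conv}, the continuity of $F_i$, $G_3$ and $H$, and the periodicity property (iv); the $\varepsilon$-scaling of the $G_3$-flux combined with \eqref{intbound} turns the surface integral into the source term $\int_{\Omega\times\Gamma}G_3(y,c,c,c_3)\varphi$. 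The initial data are recovered by the usual integration by parts in time, the average $(c_1^0+c_2^0)/2$ appearing precisely because the macroscopic equation is obtained by summing the equations for $c_1^\varepsilon$ and $c_2^\varepsilon$.

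The heart of the argument, and the step I expect to be hardest, is the $\tfrac1\varepsilon$-scaled surface term: after the cancellation of $\varphi$ it reduces to $\int_{\Gamma^\varepsilon}(c_1^\varepsilon - c_2^\varepsilon)H(c_3^\varepsilon)(\Psi_2^\varepsilon - \Psi_1^\varepsilon)\,\de\sigma_x$, so one must control $\tfrac1\varepsilon(c_1^\varepsilon - c_2^\varepsilon)$ on $\Gamma^\varepsilon$. Here I would combine the sharp estimate \eqref{estimdifconc} with the boundary-unfolding bound \eqref{normbound} to obtain $\Vert\mathcal{T}^b_\varepsilon(c_1^\varepsilon-c_2^\varepsilon)\Vert_{L^2((0,T)\times\Omega\times\Gamma)}\leq C\varepsilon$, and simultaneously use property (iii) of $\mathcal{T}^\ast_\varepsilon$ to note that $\tfrac1\varepsilon\nabla_y\mathcal{T}^\ast_\varepsilon(c_1^\varepsilon-c_2^\varepsilon) = \mathcal{T}^\ast_\varepsilon(\nabla(c_1^\varepsilon-c_2^\varepsilon))$ stays bounded and, by \eqref{unfold-convconc}(ii) for $i=1,2$ (whose macroscopic gradients cancel), converges weakly to $\nabla_y(\widehat{c}_1 - \widehat{c}_2)$. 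A Poincaré inequality on $Y^\ast$ with the $\Gamma$-trace term then shows that $\tfrac1\varepsilon\mathcal{T}^\ast_\varepsilon(c_1^\varepsilon-c_2^\varepsilon)$ is bounded in $L^2((0,T)\times\Omega;H^1(Y^\ast))$, hence converges weakly to a limit whose $y$-gradient is $\nabla_y(\widehat{c}_1-\widehat{c}_2)$; fixing the representative of $\widehat{c}_1-\widehat{c}_2$ in $H^1_{per}(Y^\ast)/\mathbb{R}$ accordingly, its trace is the weak limit of $\tfrac1\varepsilon\mathcal{T}^b_\varepsilon(c_1^\varepsilon-c_2^\varepsilon)$. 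A weak--strong passage, using $\mathcal{T}^b_\varepsilon(H(c_3^\varepsilon))\to H(c_3)$ and $\mathcal{T}^b_\varepsilon(\Psi_2^\varepsilon-\Psi_1^\varepsilon)\to\Psi_2-\Psi_1$, then produces the coupling term $\int_{\Omega\times\Gamma}(\widehat{c}_1-\widehat{c}_2)H(c_3)(\Psi_1-\Psi_2)$ of \eqref{probunf}.

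Finally, for uniqueness I would proceed as in the cited references: testing \eqref{probunf} with $\varphi = 0$ separates the cell problems, which determine the correctors $\widehat{c}_i$ linearly in terms of $\nabla c$ and, through $H(c_3)$, in terms of $c_3$; substituting them back yields a closed homogenized system for $(c,c_3)$ with a non-constant, $c_3$-dependent effective diffusion matrix. Taking the difference of two solutions, testing with the differences themselves, and using the uniform $L^\infty$ bounds from Theorem \ref{teobound}, the boundedness and Lipschitz continuity of $H$ from $(\mathbf{H}_3)$, and the Lipschitz continuity of $F_i$ and $G_3$ from $(\mathbf{H}_2)$ and $(\mathbf{H}_4)$, one controls all the coupling terms and concludes by Gronwall's inequality that the two solutions coincide. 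The central difficulty throughout remains the rigorous identification of the weak limit of $\tfrac1\varepsilon\mathcal{T}^b_\varepsilon(c_1^\varepsilon-c_2^\varepsilon)$ with the corrector difference, since it is here that the special $\tfrac1\varepsilon$-scaling and the coupling structure $G_i = (-1)^i(s_1-s_2)H(s_3)$ genuinely interact.
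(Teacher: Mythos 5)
Your proposal reproduces the architecture of the paper's proof: oscillating test functions, unfolding of every term by $\mathcal{T}^{\ast}_{\varepsilon}$ and $\mathcal{T}^{b}_{\varepsilon}$, the cancellation $G_{1}+G_{2}\equiv 0$ from \eqref{formG} (so that only the $\varepsilon$-corrector parts of the test functions interact with the $1/\varepsilon$-flux), strong convergence of the unfolded concentrations to pass to the limit in $F_{i}$ and $G_{3}$, and identification of the surviving boundary term. The one genuinely different ingredient is how you treat that boundary term. The paper sets $U^{\varepsilon}=\varepsilon^{-1}(c_{1}^{\varepsilon}-c_{2}^{\varepsilon})H(c_{3}^{\varepsilon})$, estimates $\varepsilon\nabla U^{\varepsilon}$ via Stampacchia's chain rule (this is precisely where $(\mathbf{H}_{3})_{3,4}$ enter), obtains an $L^{2}$ bound from the Poincar\'e-type inequality of \cite{Conca} combined with \eqref{estimdifconc}, invokes the compactness result of \cite{Donato-Yang*}, and identifies only the gradient $\nabla_{y}\widehat{U}=(\nabla_{y}\widehat{c}_{1}-\nabla_{y}\widehat{c}_{2})H(c_{3})$, fixing additive representatives a posteriori. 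You instead unfold the bare quotient $\varepsilon^{-1}(c_{1}^{\varepsilon}-c_{2}^{\varepsilon})$: its $y$-gradient equals $\mathcal{T}^{\ast}_{\varepsilon}(\nabla c_{1}^{\varepsilon}-\nabla c_{2}^{\varepsilon})$, bounded and weakly convergent to $\nabla_{y}(\widehat{c}_{1}-\widehat{c}_{2})$ because the macroscopic gradients cancel; its trace is $O(1)$ by \eqref{estimdifconc} and \eqref{normbound}; and a Poincar\'e inequality on $Y^{\ast}$ with a $\Gamma$-trace term (the fixed-cell counterpart of the inequality used in the paper) yields weak compactness in $L^{2}((0,T)\times\Omega;H^{1}(Y^{\ast}))$. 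The factor $H(c_{3}^{\varepsilon})$ is reattached at the end by a weak--strong product, using that $H(\mathcal{T}^{b}_{\varepsilon}(c_{3}^{\varepsilon}))\rightarrow H(c_{3})$ strongly and is bounded in $L^{\infty}$. This variant is correct; it avoids differentiating $H$ at this stage, and it delivers the identification $(\widehat{c}_{1}-\widehat{c}_{2})H(c_{3})$ directly, also on the set where $H(c_{3})$ vanishes, where the paper's argument by matching $y$-gradients and adjusting representatives is more delicate.

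Two points need repair. First, a minor one: the test functions $\varphi(t,x)+\varepsilon\psi(t,x)\Psi_{i}(x,x/\varepsilon)$ with $\Psi_{i}\in L^{2}(\Omega;H_{per}^{1}(Y^{\ast}))$ are not admissible as written ($\Psi_{i}(x,x/\varepsilon)$ has no pointwise meaning for merely $L^{2}$ dependence in $x$), and the time term cannot be handled by parts $v)$ of \eqref{conv-genc12} and \eqref{conv-genc3} alone: the pairing of $\partial_{t}c_{i}^{\varepsilon}$ with the corrector part of the test function is not controlled by \eqref{estimconcH1'}, which is an estimate in $(H_{0}^{1}(\Omega^{\ast}_{\varepsilon}))'$, whereas $\psi_{i}^{\varepsilon}$ does not vanish on $\Gamma^{\varepsilon}$. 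Both issues disappear if you proceed as the paper does: smooth tensor products $\omega_{i}(x)\psi_{i}(x/\varepsilon)$, multiplication by $w\in\mathcal{D}(0,T)$, integration by parts in time, then density. Second, the genuine gap: uniqueness. Testing the difference of two solutions and applying Gronwall does not close, because of the term $\int_{\Omega}(B(c_{3})-B(\bar{c}_{3}))\nabla\bar{c}\cdot\nabla(c-\bar{c})\,\de x$ generated by the non-constant matrix \eqref{matrixA0}: even granting Lipschitz dependence on $s$ of the cell solutions of \eqref{cellprobl2}, this term couples $c_{3}-\bar{c}_{3}$ with $\nabla\bar{c}$, which is only in $L^{2}$, and it cannot be absorbed by the energy terms without additional regularity. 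Note that the paper does not prove this step in the proof of the theorem either: it checks uniform coercivity and boundedness of $B$ and refers to \cite{Allaire} for uniqueness (see the proof of Corollary \ref{teohom}). So either invoke that reference, as the paper does, or supply the missing regularity argument; as written, the Gronwall claim is the one step of your proposal that would fail.
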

\begin{proof} By Lemma \ref{lemma-conv}, we get convergences \eqref{unfold-convconc} and \eqref{unfold-convconc3}, up to a subsequence, still denoted by $\varepsilon$. It remains to prove that  $(c, \widehat{c}_{1}, \widehat{c}_{2}, c_3, \widehat{c}_{3})$ is solution of the limit problem \eqref{probunf}. To this aim, let
\begin{equation}\label{test}
v_i^\varepsilon(x)=\varphi(x) +\varepsilon \, \omega_i(x) \, \psi_i^\varepsilon(x),
\end{equation}
where $\varphi, \omega_i \in \mathcal{D}(\Omega)$, $\psi_i^\varepsilon(x)=\psi_i\left(\dfrac{x}{\varepsilon}\right)$, and $\psi_i\in H_{\textrm{per}}^{1}(Y^\ast)$.\\

Since $\mathcal{T}^{\ast}_{\varepsilon}\left(\nabla v_i^\varepsilon \right)=\mathcal{T}^{\ast}_{\varepsilon}\left(\varphi\right) +\varepsilon\mathcal{T}^{\ast}_{\varepsilon}\left(\nabla\omega\right)
\mathcal{T}^{\ast}_{\varepsilon}\left(\psi_i^\varepsilon\right)+
\mathcal{T}^{\ast}_{\varepsilon}\left(\omega\right)\mathcal{T}^{\ast}_{\varepsilon}\left(\nabla_y\psi_i^\varepsilon\right)$, we have
\begin{equation}\label{convtest}
\begin{array}{ll}
\mathcal{T}^{\ast}_{\varepsilon}\left(v_i^\varepsilon \right)\rightarrow \varphi&\text{strongly in }L^2(\Omega\times Y^\ast),\\
\\
\mathcal{T}^{\ast}_{\varepsilon}\left(\nabla v_i^\varepsilon \right)\rightarrow \nabla\varphi+\omega_i\nabla_y \psi_i&\text{strongly in }L^2(\Omega\times Y^\ast).
\end{array}
\end{equation}

In order to get the first equation in \eqref{probunf}, let us take $v=v_1^\varepsilon$ and $v=v_2^\varepsilon$ as test functions in the first equation in \eqref{weakconcentration}, written for $i=1$ and $i=2$, respectively. Multiplying these equations by $w\in \mathcal{D}(0,T)$, integrating by parts and summing them up, we get

\begin{equation*}
\begin{array}{l}
-\displaystyle\int_0^T\int_{\Omega^{\ast}_{\varepsilon}}c_1^\varepsilon  \, v_1^\varepsilon w\, ' \mathrm{d}x\,\mathrm{d}t-\displaystyle\int_0^T\int_{\Omega^{\ast}_{\varepsilon}}c_2^\varepsilon \, v_2^\varepsilon w\,' \de x \de t\\
\\
+ \displaystyle \int_0^T\int_{\Omega^{\ast}_{\varepsilon}}D_1^\varepsilon \nabla c_{1}^{\varepsilon}\cdot\nabla v_1^{\varepsilon}\, w \de x \de t + \displaystyle \int_0^T\int_{\Omega^{\ast}_{\varepsilon}}D_2^\varepsilon \nabla c_{2}^{\varepsilon}\cdot\nabla v_2^{\varepsilon}\, w \de x \de t\\
\\
=\displaystyle \frac{1}{\varepsilon}\int_0^T\int_{\Gamma^{\varepsilon}}(G_{1}(c_{1}%
^{\varepsilon},c_{2}^{\varepsilon}, c_3^{\varepsilon})v_1^{\varepsilon} +G_{2}(c_{1}%
^{\varepsilon},c_{2}^{\varepsilon}, c_3^{\varepsilon})v_2^{\varepsilon}) \, w \de \sigma
_{x} \de t\\
\\
+\displaystyle \int_0^T\int_{\Omega
^{\varepsilon}}F_{1}^{\varepsilon}(x,c_{1}^{\varepsilon},c_{2}^{\varepsilon
}, c_3^{\varepsilon})v_1^{\varepsilon} \, w \de x \de t +\displaystyle \int_0^T\int_{\Omega
^{\varepsilon}}F_{2}^{\varepsilon}(x,c_{1}^{\varepsilon},c_{2}^{\varepsilon
}, c_3^{\varepsilon})v_2^{\varepsilon} \, w \de x \de t.
\end{array}
\end{equation*}
By \eqref{formG} we obtain
\begin{equation}\label{unf3}
\begin{array}{l}
-\displaystyle\int_0^T\int_{\Omega^{\ast}_{\varepsilon}}c_1^\varepsilon  \, v_1^\varepsilon w\, ' \mathrm{d}x\,\mathrm{d}t-\displaystyle\int_0^T\int_{\Omega^{\ast}_{\varepsilon}}c_2^\varepsilon \, v_2^\varepsilon w\,' \de x \de t
+ \displaystyle \int_0^T\int_{\Omega^{\ast}_{\varepsilon}}D_1^\varepsilon \nabla c_{1}^{\varepsilon}\cdot\nabla v_1^{\varepsilon}\, w \de x \de t \\
\\
+ \displaystyle \int_0^T\int_{\Omega^{\ast}_{\varepsilon}}D_2^\varepsilon \nabla c_{2}^{\varepsilon}\cdot\nabla v_2^{\varepsilon}\, w \de x \de t
+\displaystyle\int_0^T\int_{\Gamma^{\varepsilon}}(c_{1}
^{\varepsilon}-c_{2}^{\varepsilon})H\left (c_3^{\varepsilon}\right )(\omega_1 \, \psi_1^\varepsilon- \omega_2 \, \psi_2^\varepsilon) \, w \de \sigma
_{x} \de t\\
\\
=\displaystyle \int_0^T\int_{\Omega^\ast
_{\varepsilon}}F_{1}^{\varepsilon}(x,c_{1}^{\varepsilon},c_{2}^{\varepsilon
}, c_3^{\varepsilon})v_1^{\varepsilon} \, w \de x \de t +\displaystyle \int_0^T\int_{\Omega^\ast
_{\varepsilon}}F_{2}^{\varepsilon}(x,c_{1}^{\varepsilon},c_{2}^{\varepsilon
}, c_3^{\varepsilon})v_2^{\varepsilon} \, w \de x \de t.
\end{array}
\end{equation}
By Proposition \ref{property} and Proposition \ref{propertyb}, we unfold \eqref{unf3} by means of the operators $\mathcal{T}^{\ast}_{\varepsilon}$ and $\mathcal{T}^{b}_{\varepsilon}$ and by assumption $(\mathbf{H}_1)_{1}$ we have
\begin{equation}\label{unf3bis}
\begin{array}{l}
-\displaystyle\int_0^T\int_{\Omega\times Y^\ast}\mathcal{T}^{\ast}_{\varepsilon}(c_1^\varepsilon)  \, \mathcal{T}^{\ast}_{\varepsilon}(v_1^\varepsilon) w\, ' \mathrm{d}x\,\de y\,\mathrm{d}t-\displaystyle\int_0^T\int_{\Omega\times Y^\ast}\mathcal{T}^{\ast}_{\varepsilon}(c_2^\varepsilon) \, \mathcal{T}^{\ast}_{\varepsilon}(v_2^\varepsilon) w\,' \de x \de y \de t\\
\\
+ \displaystyle \int_0^T\int_{\Omega\times Y^\ast}D_1(y)\mathcal{T}^{\ast}_{\varepsilon}(\nabla c_{1}^{\varepsilon})\cdot\mathcal{T}^{\ast}_{\varepsilon}(\nabla v_1^{\varepsilon})\, w \de x \de y  \de t + \displaystyle \int_0^T\int_{\Omega\times Y^\ast}D_2(y)\mathcal{T}^{\ast}_{\varepsilon}(\nabla c_{2}^{\varepsilon})\cdot\mathcal{T}^{\ast}_{\varepsilon}(\nabla v_2^{\varepsilon})\, w \de x \de y \de t\\
\\
+\displaystyle\int_0^T\int_{\Omega\times \Gamma}\mathcal{T}^{b}_{\varepsilon}\left(\dfrac{c_{1}
^{\varepsilon}-c_{2}^{\varepsilon}}{\varepsilon}H(c_3^{\varepsilon})\right)(\omega_1 \, \mathcal{T}^{\ast}_{\varepsilon}(\psi_1^\varepsilon)- \omega_2 \, \mathcal{T}^{\ast}_{\varepsilon}(\psi_2^\varepsilon)) \, w \de \sigma
_{x} \de y\de t\\
\\
=\displaystyle \int_0^T\int_{\Omega\times Y^\ast}\mathcal{T}^{\ast}_{\varepsilon}(F_{1}^{\varepsilon}(x, c_{1}^{\varepsilon}, c_{2}^{\varepsilon
}, c_3^{\varepsilon}))\mathcal{T}^{\ast}_{\varepsilon}(v_1^{\varepsilon}) \, w \de x \de y \de t \\
\\
+\displaystyle \int_0^T\int_{\Omega\times Y^\ast}\mathcal{T}^{\ast}_{\varepsilon}(F_{2}^{\varepsilon}(x, c_{1}^{\varepsilon}, c_{2}^{\varepsilon
}, c_3^{\varepsilon}))\mathcal{T}^{\ast}_{\varepsilon}(v_2^{\varepsilon}) \, w \de x \de y \de t.
\end{array}
\end{equation}
In order to obtain the macroscopic problem, we want to pass to the limit with $\varepsilon\rightarrow0$ in \eqref{unf3bis}, by using the convergence results for the microscopic solutions proved in the previous section. To this aim, let us analyze at first the nonlinear terms.\\
By assumption $(\mathbf{H}_2)$, due to the strong convergences \eqref{unfold-convconc}$_{iii}$ and  \eqref{unfold-convconc3}$_{iii}$ , for $i\in \{1,2,3\}$, one has
\begin{equation}\label{convFG}
\mathcal{T}^\ast_{\varepsilon}(F_i^{\varepsilon}(x,c_{1}^{\varepsilon},c_{2}%
^{\varepsilon}, c_3^{\varepsilon})) \rightarrow F_i(y,c_{1},c_{2}, c_{3})\text{  strongly in }%
L^{2}((0,T)\times\Omega\times Y^\ast).
\end{equation}
On the other hand, assumptions $(\mathbf{H}_3)_{2,4}$ imply that the function $H$ is globally Lipschitz-continuous and it holds $\vert H(s)\vert \leq L\vert s\vert$. Consequently $H(v)\in L^2(0,T;L^2(\Omega\times \Gamma))$ if $v\in L^2(0,T;L^2(\Omega\times\Gamma))$. Hence, the boundary term involving the function $H$ is well-defined, since the concentration fields are essentially bounded. Moreover, by \eqref{estimdifconc} and \eqref{normbound}, we get
\begin{equation}\label{boundest}
\begin{array}{l}
\left \|  \mathcal{T}^{b}_{\varepsilon }\left((c_{1}
^{\varepsilon}-c_{2}^{\varepsilon})H(c_3^{\varepsilon})\right)\right \|_ {L^2((0,T) \times \Omega \times \Gamma)} =
\\
\\
= \|  (\mathcal{T}^{b}_{\varepsilon } (c_1^\varepsilon) - \mathcal{T}^{b}_{\varepsilon } (c_2^\varepsilon)) \|_{L^2((0,T) \times \Omega \times \Gamma)}\|  H(\mathcal{T}^{b}_{\varepsilon } (c_3^\varepsilon) ) \|_{L^\infty((0,T) \times \Omega \times \Gamma)}  \leq C \, \varepsilon.
\end{array}
  \end{equation}
Therefore, there exists a function $U\in L^2((0,T) \times \Omega \times \Gamma)$ such that
\begin{equation}\label{convbound}
\mathcal{T}^{b}_{\varepsilon } \left( \frac{c_1^\varepsilon - c_2^\varepsilon}{\varepsilon}H(c_3^\varepsilon)\right) \rightharpoonup   U \quad \textrm{weakly  in } L^2((0,T)\times \Omega \times \Gamma).
\end{equation}
In order to identify the function $U$, let us define in $(0,T)\times\Omega^\ast_\varepsilon$ the function
\[
U^\varepsilon:=\dfrac{c_1^\varepsilon-c_2^\varepsilon}{\varepsilon}H(c_3^\varepsilon).
\]
Clearly, $U^\varepsilon \in L^2(0,T;H^1(\Omega^\ast_\varepsilon))$. Moreover, the {\it a priori} estimate  \eqref{estimdifconc} yields

\begin{equation}\label{estU}
\sqrt{\varepsilon}||U^\varepsilon||_{L^2((0,T)\times\Gamma_\varepsilon)}\leq C
\end{equation}
and since the chain rule applies according to \cite{Stampacchia}, Lemma 1.1 and $H'$ is bounded due to assumption $\mathcal{H}_3$, by  \eqref{estimconcLinf} we get
\begin{equation}\label{estgradU}
\begin{array}{l}
 \varepsilon ||\nabla U^\varepsilon ||_{L^2((0,T)\times\Omega^\ast_\varepsilon)}\leq ||\nabla c_1^\varepsilon-\nabla c_2^\varepsilon||_{L^2((0,T)\times\Omega^\ast_\varepsilon)}||H(c_3^\varepsilon)||_{L^\infty((0,T)\times\Omega^\ast_\varepsilon)}\\
\\
+||c_1^\varepsilon-c_2^\varepsilon||_{L^\infty((0,T)\times\Omega^\ast_\varepsilon)}||H'(c_3^\varepsilon)||_{L^\infty((0,T)\times\Omega^\ast_\varepsilon)}||\nabla c_3^\varepsilon||_{L^2((0,T)\times\Omega^\ast_\varepsilon)}\leq C.
\end{array}
\end{equation}
Estimates \eqref{estU} and \eqref{estgradU}, by using Lemma 6.1 in \cite{Conca}, imply
\[
||U^\varepsilon||_{L^2((0,T)\times\Omega^\ast_\varepsilon)}\leq C\left( \varepsilon ||\nabla U^\varepsilon ||_{L^2((0,T)\times\Omega^\ast_\varepsilon)}+ \sqrt{\varepsilon}||U^\varepsilon||_{L^2((0,T)\times\Gamma_\varepsilon)}\right)\leq C.
\]
Now, we can apply Proposition 2.8 in \cite{Donato-Yang*} and get the existence of $\widehat{U}\in L^2(0,T;L^2(\Omega;H^1(Y^\ast)))$ such that
\begin{equation}\label{convunfU}
\mathcal{T}^\ast_\varepsilon(U^\varepsilon)\rightharpoonup   \widehat{U} \quad \textrm{weakly  in } L^2((0,T)\times \Omega; H^1(Y^\ast))
\end{equation}
and
\begin{equation}\label{convunfgradU}
\mathcal{T}^\ast_\varepsilon(\varepsilon\nabla U^\varepsilon)\rightharpoonup   \nabla_y\widehat{U} \quad \textrm{weakly  in } L^2((0,T)\times \Omega\times Y^\ast).
\end{equation}
By the definition of the boundary unfolding operator $T^b_\varepsilon$ as the trace on $\Gamma$ of $T^\ast_\varepsilon$, due to the continuity of the trace operator, by \eqref{convbound} and \eqref{convunfU}, we deduce that $U$ is indeed the trace on $\Gamma$ of $\widehat{U}$.
The equality $U=\widehat{U}$ in $(0,T)\times\Omega\times\Gamma$ shows that it is enough to identify the function $\widehat{U}$. We are able to calculate the limit \eqref{convunfU}.
Indeed, for $\Psi\in \mathcal{D}((0,T)\times\Omega\times Y^\ast)^n$, by using the properties of the unfolding operator for perforated domains, we get
\begin{equation}\label{U*}
\begin{array}{c}
\displaystyle\int_0^T\int_{\Omega\times Y^\ast} \mathcal{T}^\ast_\varepsilon(\varepsilon \nabla U^\varepsilon)\Psi(x,y,t)\,\de x \de y \de t=\int_0^T\int_{\Omega\times Y^\ast} \mathcal{T}^\ast_\varepsilon \left((\nabla c_1^\varepsilon-\nabla c_2^{\varepsilon})H(c_3^\varepsilon)\right)\Psi(x,y,t)\,\de x \de y \de t\\
\\
\displaystyle+ \int_0^T\int_{\Omega\times Y^\ast} \mathcal{T}^\ast_\varepsilon \left((c_1^\varepsilon-c_2^{\varepsilon})H^{'}(c_3^\varepsilon)\nabla c_3^\varepsilon \right)\Psi(x,y,t)\,\de x \de y \de t\\
\\
=\displaystyle \int_0^T\int_{\Omega\times Y^\ast}  \left((\mathcal{T}^\ast_\varepsilon (\nabla c_1^\varepsilon)-T^\ast_\varepsilon (\nabla c_2^{\varepsilon}))H(\mathcal{T}^\ast_\varepsilon(c_3^\varepsilon)\right)\Psi(x,y,t)\,\de x \de y \de t
\\
\\
\displaystyle+\int_0^T\int_{\Omega\times Y^\ast} \left((\mathcal{T}^\ast_\varepsilon (c_1^\varepsilon)-T^\ast_\varepsilon (c_2^{\varepsilon}))T^\ast_\varepsilon (H^{'}(c_3^\varepsilon))T^\ast_\varepsilon (\nabla c_3^\varepsilon) \right)\Psi(x,y,t)\,\de x \de y \de t.
\end{array}
\end{equation}
We remark now that, according to \eqref{pos1}, \eqref{b1} and assumption $(\mathbf{H}_3)_3$, we have that
\begin{equation*}
\vert \mathcal{T}^{\ast}_\varepsilon(H^{'}(c_3^\varepsilon))\vert=\vert H^{'}(\mathcal{T}^{\ast}_\varepsilon(c_3^\varepsilon))\vert\leq L
\quad \textrm{in } L^\infty((0, T )\times \Omega\times Y^\ast),
\end{equation*}
hence by \eqref{unfold-convconc}iii) and \eqref{unfold-convconc3}ii), we get
\begin{equation}\label{boundLinf}
\int_0^T\int_{\Omega\times Y^\ast} \left((\mathcal{T}^\ast_\varepsilon (c_1^\varepsilon)-T^\ast_\varepsilon (c_2^{\varepsilon}))T^\ast_\varepsilon (H^{'}(c_3^\varepsilon))T^\ast_\varepsilon (\nabla c_3^\varepsilon) \right)\Psi(x,y,t)\,\de x \de y \de t\rightarrow 0.
\end{equation}
Using now convergence \eqref{unfold-convconc3}iii) and assumption $(\mathbf{H}_3)$ we obtain
\begin{equation}\label{convhh'}
\mathcal{T}^{\ast}_\varepsilon (H(c_3^\varepsilon))=H(\mathcal{T}^{\ast}_\varepsilon(c_3^\varepsilon)) \rightarrow   H(c_3)  \quad \textrm{strongly  in } L^2((0,T)\times \Omega\times Y^\ast).
\end{equation}
Hence, by \eqref{unfold-convconc}ii), \eqref{boundLinf} and \eqref{convhh'}, we obtain
\begin{equation*}
\begin{array}{c}
\displaystyle\int_0^T\int_{\Omega\times Y^{\ast}} \mathcal{T}^\ast_\varepsilon(\varepsilon \nabla U^\varepsilon)\Psi(x,y,t)\,\de x \de y \de t\rightarrow
\displaystyle\int_0^T\int_{\Omega\times Y} \left(\nabla_{y}\widehat{c}_{1}-\nabla_{y}\widehat{c}_{2})H(c_3\right)\Psi(x,y,t)\,\de x \de y \de t.
\end{array}
\end{equation*}
for any $\Psi\in \mathcal{D}((0,T)\times\Omega\times Y^\ast)^n$.\\
Comparing with \eqref{convunfgradU} and taking into account the fact that $H$ does not depend on the variable y and the functions $\widehat{c}_{1}$ and $\widehat{c}_{2}$ are defined up to an additive function depending on $t$ and $x$ only, we obtain $\widehat{U}=(\widehat{c}_{1}-\widehat{c}_{2})H(c_3)$.
So, finally
\begin{equation}\label{Ubound}
U=\widehat{U}=(\widehat{c}_{1}-\widehat{c}_{2})H(c_3).
\end{equation}

\bigskip
Thus, by \eqref{convtest}, \eqref{convFG}, \eqref{convbound} and \eqref{Ubound} we can pass to the limit as $\varepsilon\rightarrow0$ in \eqref{unf3bis} and we obtain:
\begin{equation*}
\begin{array}{l}
\displaystyle -2 \vert Y^\ast \vert \int_{0}^{T}\int_{\Omega}c \varphi w'\,\mathrm{d}x \mathrm{d}t+\int_{0}^{T}\int_{\Omega\times
Y^{\ast}}D_1(y)(\nabla c+\nabla_{y}\widehat{c_{1}})(\nabla\varphi+\omega_1\nabla_{y}%
\psi_1)\, w \de x \de y \de t\\
\\
\displaystyle +\int_{0}^{T}\int_{\Omega\times
Y^{\ast}}D_2(y)(\nabla c+\nabla_{y}\widehat{c_{2}})(\nabla\varphi+\omega_2\nabla_{y}%
\psi_2)\, w \de x \de y \de t\\
\\
+\displaystyle \int_{0}^{T}\int_{\Omega\times\Gamma} (\widehat{c_{1}}-\widehat{c_{2}}) H(c_3) (\omega_1\psi_1-\omega_2\psi_2) w \de x \de \sigma_{y} \de t\\
\\
=\displaystyle \int_{0}^{T} \int_{\Omega\times Y^{\ast}}F_{1}(y,c,c, c_{3})\varphi w\de x \de y \de t+\int_{0}^{T} \int_{\Omega\times Y^{\ast}}F_{2}(y,c,c, c_{3})\varphi w\de x \de y \de t,
\end{array}
\end{equation*}
for every $\varphi, \omega_i \in \mathcal{D}(\Omega)$, $\psi_i\in H_{\textrm{per}}^{1}(Y^\ast)$ and $w\in\mathcal{D}(0,T)$ which, by density, leads to the first equation in \eqref{probunf}. \\
In a similar way, by \eqref{convFG} and since due to assumption $(\mathbf{H}_4)$ and the strong convergences \eqref{unfold-convconc3}$_{iii}$ we get
\begin{equation}
\mathcal{T}_{\varepsilon}(G_3^{\varepsilon}(x, c_{1}^{\varepsilon},c_{2}%
^{\varepsilon}, c_3^{\varepsilon}))
\rightarrow G_3(y, c,c, c_3)\text{  strongly in }L^2((0,T)\times\Omega \times\Gamma),
\end{equation}
we obtain the equation governing the evolution of the concentration $c_3^\varepsilon$, as $\varepsilon \rightarrow 0$, which reads:
\begin{equation*}
\begin{array}{l}
\displaystyle -\vert Y^\ast \vert \int_{0}^{T}\int_{\Omega}c_{3}\varphi\partial_{t}%
w\,\mathrm{d}x \mathrm{d}t+\displaystyle \int_{0}^{T}\int_{\Omega\times
Y^{\ast}}D_3(y)(\nabla c_{3}+\nabla_{y}\widehat{c_{3}})(\nabla\varphi+\omega_3\nabla_{y}%
\psi_3)\, w \de x \de y \de t\\
\\
=\displaystyle \int_{0}^{T}\int_{\Omega\times\Gamma}G_{3}(y,c,c, c_3)\varphi w\mathrm{d}x\mathrm{d}\sigma_{y}\mathrm{d}t+\int_{0}^{T}%
\int_{\Omega\times Y^{\ast}}F_{3}(y,c,c,c_{3})\varphi w \de x \de y \de t
\end{array}
\end{equation*}
for every $\varphi, \omega_i \in \mathcal{D}(\Omega)$, $\psi_i\in H_{\textrm{per}}^{1}(Y^\ast)$ and $w\in\mathcal{D}(0,T)$ which, by density, gives the second equation in \eqref{probunf}. \\

\noindent The initial condition is obtained in a standard way.
\end{proof}\\

\begin{corollary}\label{teohom} Let $(c_{1}^{\varepsilon}, c_{2}^{\varepsilon}, c_{3}^{\varepsilon})$ be the solution of system  \eqref{weakconcentration} - \eqref{weakinitial}. Then, if the assumptions $(\mathbf{H}_1) \div (\mathbf{H}_5)$ hold, there exist $c$ and $c_{3}$ in
$L^{2}(0,T;H_0^{1}(\Omega))$ such that convergences \eqref{unfold-convconc} and \eqref{unfold-convconc3} hold and the pair $(c, c_3)$ is the unique solution of the coupled system
\begin{equation}\label{homsystem}
\left\{
\begin{array}{ll}
2  \dfrac{\partial c }{\partial t}-\operatorname{div}
(B(c_3)\nabla c)=  \mathcal{M}_{Y^{\ast}}
(F_{1}(\cdot,c,c,c_{3}))+\mathcal{M}_{Y^{\ast}}
(F_{2}(\cdot,c,c,c_{3}))& \text{\rm in }(0,T)\times\Omega\\
\\
\dfrac{\partial c_3 }{\partial t}-\operatorname{div}
(D^{0}\nabla c_3)= \mathcal{M}_{Y^{\ast}}
(F_{3}(\cdot,c,c,c_{3}))+\dfrac{\vert \Gamma \vert }{\vert Y^{\ast}\vert}\mathcal{M}_{\Gamma}
(G_{3}(\cdot,c,c,c_{3}))& \text{\rm in }(0,T)\times\Omega\\
\\
c(x,0)=(c_1^0+c_2^0)/2 & \text{\rm in } \Omega\\
\\
c_3(x,0)=c_3^0  & \text{\rm in } \Omega.
\end{array}
\right.
\end{equation}
The positive definite constant homogenized diffusion matrix $D^0$ is given, for $i,j\in \{1,\dots,n\}$, by its entries
\begin{equation}\label{matrixA}
D^0_{ij}=\mathcal{M}_{Y^\ast}\left((D_3(y))_{ij}-\sum_{k=1}^{n}(D_3(y))_{ik}\dfrac{\partial\chi^j}{\partial y_k}(y)\right),
\end{equation}
where, for $j\in \{1,\dots,n\}$, $\chi^j \in H^1_{\textrm{per}}(Y^{\ast})/{\Bbb R}$ verifies the local problem
\begin{equation} \label{cellprobl1}
\left\{
\begin{array}{ll}
-\displaystyle {\rm div}_y \left(D_3(y)(\nabla_y \chi^{j}-\mathbf{e}_{j})\right)=0 & \text{\rm in }Y^{\ast},\\
\\
D_3(y)(\nabla_y\chi^{j}-\mathbf{e}_{j})\cdot\nu=0 & \text{\rm on }\Gamma,\\
\\
\chi^{j} \text{ \rm Y-periodic }\,\text{\rm such that }\mathcal{M}_{Y^\ast}(\chi^j)=0,
\end{array}
\right.
\end{equation}
where $\nu$ is the outward unit normal to the boundary $\Gamma$.\\
For every $s\in\mathbb{R}$, the non-constant matrix $B(s)$ is defined, for $i,j\in \{1,\dots,n\}$, by its entries
\begin{equation}\label{matrixA0}
(B(s))_{ij}=\mathcal{M}_{Y^\ast}\left((D_1(y))_{ij}-\sum_{k=1}^{n}(D_1(y))_{ik}\dfrac{\partial\chi_1^j}{\partial y_k}(y,s)\right)+\mathcal{M}_{Y^\ast}\left((D_2(y))_{ij}-\sum_{k=1}^{n}(D_2(y))_{ik}\dfrac{\partial\chi_2^j}{\partial y_k}(y,s)\right),
\end{equation}
where the pair $(\chi_1^j(\cdot, s), \, \chi_2^j(\cdot, s))\in H^1_{\textrm{per}}(Y^{\ast})/{\Bbb R}\times H^1_{\textrm{per}}(Y^{\ast})/{\Bbb R}$ is, up to the addition of the same constant to $\chi_1^j(\cdot, s)$ and $\chi_2^j(\cdot, s)$, the unique solution of the local problem
\begin{equation} \label{cellprobl2}
\left\{
\begin{array}{ll}
- \displaystyle{\rm div}_{y} \left(D_1(y)(\nabla_{y} \chi_1^j-\mathbf{e}_{j})\right)=0  & \text{\rm in   } Y^\ast, \\
\\
- \displaystyle{\rm div}_{y} \left(D_2(y)(\nabla_{y} \chi_2^j-\mathbf{e}_{j})\right)=0 & \text{\rm in   } Y^\ast, \\
\\
D_1(y)(\nabla_{y} \chi_1^j-\mathbf{e}_{j})\cdot \nu=-H(s)(\chi_1^j -\chi_2^j) & \text{\rm on   } \Gamma, \\
\\
D_2(y)(\nabla_{y} \chi_2^j-\mathbf{e}_{j}) \cdot \nu= H(s)(\chi_1^j -\chi_2^j) &   \text{\rm on   } \Gamma,\\
\\
\chi_i^{j} \text{ \rm Y-periodic },\, i\in\{1,2 \}\,\text{\rm  and such that }\mathcal{M}_{Y^\ast}(\chi_1^j)=0,
\end{array}
\right.
\end{equation}
where $\nu$ is the outward unit normal to the boundary $\Gamma$.

\end{corollary}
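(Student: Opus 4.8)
The plan is to start from the two-scale limit problem \eqref{probunf}, which Theorem \ref{teounf} already guarantees to be uniquely solvable, and to decouple the slow variables $(c,c_3)$ from the correctors $(\widehat c_1,\widehat c_2,\widehat c_3)$ by specializing the test functions. For the third concentration I would first take $\varphi=0$ in the second equation of \eqref{probunf}; this leaves $\int_{\Omega\times Y^\ast}D_3(y)(\nabla c_3+\nabla_y\widehat c_3)\nabla_y\Psi_3=0$ for every admissible $\Psi_3$, which is exactly the weak formulation of the cell problem \eqref{cellprobl1}. By linearity in $\nabla c_3$, I would write $\widehat c_3(t,x,y)=-\sum_{j}\partial_{x_j}c_3(t,x)\,\chi^j(y)$, with $\chi^j\in H^1_{\mathrm{per}}(Y^\ast)/\mathbb R$ the unique solution of \eqref{cellprobl1} (existence and uniqueness up to a constant by Lax--Milgram, using the ellipticity $(\mathbf{H}_1)_2$; the normalization $\mathcal M_{Y^\ast}(\chi^j)=0$ pins down the constant). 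Then I would take $\Psi_3=0$, insert this expression for $\widehat c_3$, and perform the $y$-integration: the bulk flux collapses to $|Y^\ast|\int_\Omega (D^0\nabla c_3)\cdot\nabla\varphi$ with $D^0$ exactly as in \eqref{matrixA}, while $\int_{\Omega\times Y^\ast}F_3\varphi=|Y^\ast|\int_\Omega\mathcal M_{Y^\ast}(F_3)\varphi$ and $\int_{\Omega\times\Gamma}G_3\varphi=|\Gamma|\int_\Omega\mathcal M_\Gamma(G_3)\varphi$. Dividing by $|Y^\ast|$ yields the second line of \eqref{homsystem}.

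For the concentration $c$ the procedure is the same in spirit but the cell problem becomes a coupled system. Taking $\varphi=0$ in the first equation of \eqref{probunf} isolates $\int_{\Omega\times Y^\ast}D_1(\nabla c+\nabla_y\widehat c_1)\nabla_y\Psi_1+\int_{\Omega\times Y^\ast}D_2(\nabla c+\nabla_y\widehat c_2)\nabla_y\Psi_2+\int_{\Omega\times\Gamma}(\widehat c_1-\widehat c_2)H(c_3)(\Psi_1-\Psi_2)=0$. I would then make the ansatz $\widehat c_i(t,x,y)=-\sum_j\partial_{x_j}c(t,x)\,\chi_i^j(y,c_3(t,x))$ for $i\in\{1,2\}$ and verify, after integration by parts in $y$ and localization in $\Psi_1,\Psi_2$, that $(\chi_1^j,\chi_2^j)$ must solve precisely \eqref{cellprobl2}, the two Robin-type interface conditions arising with opposite signs from the antisymmetric boundary term $(\Psi_1-\Psi_2)$. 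Setting next $\Psi_1=\Psi_2=0$ and substituting the ansatz, the two bulk fluxes add up to $|Y^\ast|\int_\Omega (B(c_3)\nabla c)\cdot\nabla\varphi$ with $B(s)$ as in \eqref{matrixA0}; together with the averaged source this gives the first line of \eqref{homsystem} after dividing by $|Y^\ast|$.

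The central point, and the main obstacle, is the solvability and the parameter dependence of the coupled cell problem \eqref{cellprobl2}. I would set on $H^1_{\mathrm{per}}(Y^\ast)\times H^1_{\mathrm{per}}(Y^\ast)$ the symmetric bilinear form $a_s\big((\chi_1,\chi_2),(\Psi_1,\Psi_2)\big)=\int_{Y^\ast}D_1\nabla_y\chi_1\nabla_y\Psi_1+\int_{Y^\ast}D_2\nabla_y\chi_2\nabla_y\Psi_2+\int_\Gamma H(s)(\chi_1-\chi_2)(\Psi_1-\Psi_2)$, which is nonnegative since $H\ge 0$ by $(\mathbf{H}_3)_1$. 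Its kernel consists of pairs with $\nabla_y\chi_1=\nabla_y\chi_2=0$ and $\chi_1=\chi_2$ on $\Gamma$, i.e. of the single family of common constants $(\kappa,\kappa)$; hence, working in the quotient by this one-dimensional space and using a Poincar\'e--Wirtinger inequality together with $(\mathbf{H}_1)_2$, $a_s$ is coercive and Lax--Milgram furnishes a unique solution, normalized by $\mathcal M_{Y^\ast}(\chi_1^j)=0$. The delicate features to address are: (i) only a \emph{single} common additive constant is available (not one per component), which is exactly the mechanism producing the $c_3$-dependence of $B$ and hence the cross-diffusion structure; (ii) the degenerate case $H(s)=0$ (e.g. $s=0$, since $(\mathbf{H}_3)_2$ gives $H(0)=0$), where \eqref{cellprobl2} decouples and the additive constants cease to be unique, yet $B(s)$ remains well defined because only $\nabla_y\chi_i^j$ enter \eqref{matrixA0}; and (iii) the continuity, hence measurability, of $s\mapsto B(s)$, which I would deduce from the continuity of $H$ and a standard continuous-dependence estimate for $a_s$, so that $B(c_3(t,x))$ is a legitimate measurable coefficient.

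Finally, the positive definiteness of $D^0$ is the classical energy computation: testing \eqref{cellprobl1} against $\xi\cdot\chi$ and using $(\mathbf{H}_1)_2$ gives $\xi^{\mathsf T}D^0\xi=\mathcal M_{Y^\ast}\big((\xi-\nabla_y(\xi\cdot\chi))^{\mathsf T}D_3(\xi-\nabla_y(\xi\cdot\chi))\big)\ge\alpha\,\mathcal M_{Y^\ast}\big(|\xi-\nabla_y(\xi\cdot\chi)|^2\big)>0$ for $\xi\ne 0$. Uniqueness of $(c,c_3)$ for \eqref{homsystem}, and therefore convergence of the whole family rather than a subsequence in \eqref{unfold-convconc}--\eqref{unfold-convconc3}, follows from the uniqueness of the solution $(c,\widehat c_1,\widehat c_2,c_3,\widehat c_3)$ of \eqref{probunf} established in Theorem \ref{teounf}, since the correctors are recovered from $(c,c_3)$ through the cell problems.
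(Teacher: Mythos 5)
Your derivation of the macroscopic system is essentially the paper's own proof: take $\varphi=0$ in each equation of \eqref{probunf} to isolate the cell problems, represent the correctors by the separation-of-variables ansatz $\widehat c_3=-\sum_j\chi^j\,\partial_{x_j}c_3$ and $\widehat c_i=-\sum_j\chi_i^j(\cdot,c_3)\,\partial_{x_j}c$, then take $\Psi_i=0$ and substitute to obtain \eqref{homsystem} with $D^0$ and $B(c_3)$ given by \eqref{matrixA} and \eqref{matrixA0}. Where you go beyond the paper is in the treatment of the coupled cell problem \eqref{cellprobl2}: the paper only remarks that Lax--Milgram gives existence and uniqueness up to a common constant, whereas you spell out the symmetric bilinear form, identify its kernel as the diagonal constants $(\kappa,\kappa)$ when $H(s)>0$, handle the degenerate case $H(s)=0$ (where the system decouples and $B(s)$ is still well defined because only $\nabla_y\chi_i^j$ enters), and raise the measurability of $s\mapsto B(s)$ needed to make $B(c_3(t,x))$ a legitimate coefficient. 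These are genuine refinements of the argument, not deviations from it.

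The one point where your proposal has a gap is the final step, uniqueness of $(c,c_3)$ and hence whole-sequence convergence. You derive it from ``the uniqueness of the solution of \eqref{probunf} established in Theorem \ref{teounf},'' but within the paper's logical structure that uniqueness is \emph{not} proved in Theorem \ref{teounf}: its proof only constructs a limit along a subsequence, and the uniqueness assertion in its statement is justified precisely by the argument carried out in this Corollary. The paper closes the loop here by observing that, thanks to $(\mathbf{H}_3)$ (in particular $0\le H\le l$), the matrix $B$ is uniformly coercive and uniformly bounded, so that the limit system \eqref{homsystem} has a unique solution (as in the reference of Allaire and Hutridurga), and only then does whole-sequence convergence follow. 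Your deferral to Theorem \ref{teounf} is therefore circular as written; you should instead prove uniqueness of \eqref{homsystem} directly, using the uniform coercivity and boundedness of $B$, the Lipschitz dependence of $B(s)$ on $s$ (inherited from $H\in C^1$ with bounded derivative via a continuous-dependence estimate for the cell problem), the Lipschitz continuity of $F_i$ and $G_3$, and a Gronwall argument for the difference of two solutions.
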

\begin{proof}
Taking $\varphi=0$ in the second equation in \eqref{probunf}, we get
\begin{equation*}
\displaystyle\int_{\Omega\times Y^{\ast}}D_3(y)(\nabla c_3+\nabla_{y}\widehat{c_{3}})\nabla_{y}\Psi_3
\de x \de y =0\,\text{ in }\mathcal{D}'(0,T).
\end{equation*}
 for all $\Psi_3\in L^2(\Omega;H_{per}^{1}(Y^\ast))$. Hence, for a.e. $t\in (0,T)$, we get
\begin{equation}\label{probconc}
\left\{
\begin{array}{ll}
-\hbox{div}_{y}\left(D_3(y)\nabla_y\widehat{c_3}\right)=\hbox{div}_{y}(D_3(y)\nabla c_3) &\text{ in }\Omega\times Y^{\ast},\\[2mm]
D_3(y)\nabla_{y}\widehat{c_3}\cdot\nu=-D_3(y)\nabla c_3\cdot\nu&\text{ on
}\Omega\times\Gamma,\\[2mm]
\widehat{c_3}\quad\text{periodic in }y.
\end{array}
\right.
\end{equation}
By linearity, we get, for a.e. $t\in (0,T)$ and $(x,y)\in \Omega\times Y^\ast$,
\begin{equation}\label{cihat}
\widehat{c_{3}} (t,x,y)=-\sum\limits_{j=1}^{n}\chi^{j}(y)\frac{\partial
c_{3}}{\partial x_{j}}(t,x),
\end{equation}
where $\chi^{j},\,j\in \{1,\dots,n\}$, are the solutions of the local
problems \eqref{cellprobl1}.

Taking $\Psi_3=0$ in the second equation of \eqref{probunf}, we get
\begin{equation*}
\begin{array}{l}
\vert Y^\ast \vert \displaystyle \langle\partial_t c_3, \varphi\rangle_{\Omega}+ \int_{\Omega\times
Y^{\ast}}D_3(y)(\nabla c_{3}+\nabla_{y}\widehat{c_{3}})\nabla\varphi \de x \de y\\
\\
=\displaystyle \int_{\Omega\times\Gamma}G_{3}(y,c,c, c_{3})\varphi\mathrm{d}x\mathrm{d}\sigma_{y}+
\int_{\Omega\times Y^{\ast}}F_{3}(y,c,c, c_{3})\varphi \de x \de y ,\text{ in }\mathcal{D}'(0,T).
\end{array}
\end{equation*}
Replacing $\widehat{c}_3$ given by \eqref{cihat} in the previous equality, we obtain
\begin{equation*}
\begin{array}{l}
\vert Y^\ast \vert \displaystyle \langle\partial_t c_3, \varphi\rangle_{\Omega}+\int_{\Omega}\sum\limits_{i=1}^{n}\sum\limits_{j=1}^{n} \left( \int_{Y^{\ast}}\left((D_3(y))_{ij}-\sum_{k=1}^{n}(D_3(y))_{ik}\dfrac{\partial \chi^{j}}{
\partial y_{k}}\left( y\right) \right) \de y\right)\dfrac{\partial c_3}{\partial x_{j}}\dfrac{\partial
\varphi}{\partial x_{i}}	\,\de x
\\
 \\
 =\displaystyle\int_{\Omega\times\Gamma}G_{3}(y,c,c, c_{3})\varphi \de x \de \sigma_{y}+
\int_{\Omega\times Y^{\ast}}F_{3}(y,c,c, c_{3})\varphi \de x \de y\text{ in }\mathcal{D}'(0,T)
 \end{array}
\end{equation*}
for all $\varphi \in H_0^{1}\left( \Omega \right)$, which means that $(c, c_3)$ satisfies the following problem
\begin{equation*}
\left\{
\begin{array}{ll}
\displaystyle \dfrac{\partial c_{3}}{\partial t}-\sum\limits_{i=1}^{n}\dfrac{\partial
}{\partial x_{i}}\sum\limits_{j=1}^{n} \left(\dfrac{1}{|Y^\ast|} \int_{Y^{\ast}}\left((D_3(y))_{ij}-\sum_{k=1}^{n}(D_3(y))_{ik}\dfrac{\partial \chi^{j}}{
\partial y_{i}}\left( y\right) \right) \de y\right)\dfrac{\partial c_3}{\partial x_{j}}\\
\\
\\= \dfrac{|\Gamma|}{|Y^\ast|}\mathcal{M}_{\Gamma}
(G_{3}(\cdot,c,c, c_{3}))+\mathcal{M}_{Y^{\ast}}
(F_{3}(\cdot,c,c, c_{3}))& \text{\rm in }(0,T)\times\Omega,\\
\\
c_3=0& \text{\rm on }(0,T)\times\partial\Omega.
\end{array}%
\right.
\end{equation*}
Thus, we are led to the homogenized equation for $c_3$ in \eqref{homsystem}, where the constant matrix $D^0$ is defined through \eqref{matrixA}. \\

Let us take now $\varphi=0$ in the first equation in \eqref{probunf}. We have:
\begin{equation*}
\begin{array}{ll}
\displaystyle \int_{\Omega\times Y^{\ast}}D_1(y)(\nabla c+\nabla_{y}\widehat{c_{1}})\nabla_{y}\Psi_1\,\de x \de y+\displaystyle \int_{\Omega\times Y^{\ast}}D_2(y)(\nabla c+\nabla_{y}\widehat{c_{2}})\nabla_{y}\Psi_2\,\de x \de y \\
\\
+\displaystyle \int_{\Omega\times\Gamma} (\widehat{c_{1}}-\widehat{c_{2}}) H(c_3) (\Psi_1-\Psi_2) \de x \de \sigma_{y}=0\text{ in }\mathcal{D}'(0,T),
\end{array}%
\end{equation*}
for all $\Psi_i\in L^2(\Omega;H_{per}^{1}(Y^\ast))$, $i\in\{1,2\}$. Hence, for a.e. $t\in (0,T)$, we have
\begin{equation}\label{probconc}
\left\{
\begin{array}{ll}
-\hbox{div}_{y}\left(D_1(y)\nabla_y\widehat{c_1}\right)=\hbox{div}_{y}(D_1(y)\nabla c) &\text{ in }\Omega\times Y^{\ast},\\
\\
-\hbox{div}_{y}\left(D_2(y) \nabla_y \widehat{c_2}\right)=\hbox{div}_{y}(D_2(y)\nabla c)&\text{ in }\Omega\times Y^{\ast},\\
\\
D_1(y)\nabla_{y}\widehat{c_1}\cdot\nu=-D_1(y)\nabla c\cdot \nu -H(c_3)(\widehat{c_1}-\widehat{c_2})&\text{ on
}\Omega\times\Gamma,\\
\\
D_2(y)\nabla_{y}\widehat{c_2}\cdot\nu=-D_2(y)\nabla c\cdot \nu + H(c_3)(\widehat{c_1}-\widehat{c_2})&\text{ on
}\Omega\times\Gamma,\\
\\
\widehat{c_i}\ \text{periodic in }y,\,i\in\{1,2\}.
\end{array}
\right.
\end{equation}
Then, acting as previously, for a.e. $t\in (0,T)$ and $(x,y)\in \Omega\times Y^\ast$, we get
\begin{equation}\label{uhat1}
 \widehat{c}_1(t,x,y)=-\displaystyle \sum\limits_{j=1}^n \chi_1^j (y, c_3(t,x)) \displaystyle \frac{\partial c}{\partial x_j}(t,x) \quad \textrm{in  }
 (0,T) \times \Omega \times Y^{\ast},
\end{equation}
\begin{equation}\label{uhat2}
 \widehat{c}_2(t,x,y)=-\displaystyle \sum\limits_{j=1}^n \chi_2^j (y, c_3(t,x)) \displaystyle \frac{\partial c}{\partial x_j}(t,x)  \quad \textrm{in  }
 (0,T) \times \Omega \times Y^{\ast},
\end{equation}
where $\chi_k^j(c_3)$, $k\in \{1,2\}$ and $j\in \{1,\dots,n\}$, up to the addition of the same constant,
are the unique solutions of the local problems \eqref{cellprobl2}. 

Taking $\Psi_1=\Psi_2=0$ in the first equation of \eqref{probunf}, we get
\begin{equation}
\begin{array}{l}
2\displaystyle \vert Y^\ast \vert \, \langle\partial_t c, \varphi\rangle_{\Omega}+
\displaystyle \int_{\Omega\times Y^{\ast}}D_1(y)(\nabla c+\nabla_{y}\widehat{c_{1}})\nabla\varphi \de x \de y+\displaystyle \int_{\Omega\times Y^{\ast}}D_2(y)(\nabla c+\nabla_{y}\widehat{c_{2}})\nabla\varphi \de x \de y \\
\\
\displaystyle = \int_{\Omega\times Y^{\ast}}F_{1}(y,c,c, c_{3})\varphi \de x \de y+\int_{\Omega\times Y^{\ast}}F_{2}(y,c,c, c_{3})\varphi \de x \de y,\text{ in }\mathcal{D}'(0,T).
\end{array}
\end{equation}
Replacing $\widehat{c}_i$, $i\in\{1,2\}$, given by \eqref{uhat1} and \eqref{uhat2} in the previous equality, we obtain
\begin{equation*}
\begin{array}{l}
2\vert Y^\ast \vert \displaystyle \langle\partial_t c, \varphi\rangle_{\Omega}+\int_{\Omega}\sum\limits_{i=1}^{n}\sum\limits_{j=1}^{n} \left( \int_{Y^{\ast}}\left((D_1(y))_{ij}-\sum_{k=1}^{n}(D_1(y))_{ik}\dfrac{\partial \chi_1^{j}}{
\partial y_{k}}\left( y\right) \right) \de y\right)\dfrac{\partial c}{\partial x_{j}}\dfrac{\partial
\varphi}{\partial x_{i}}	\,\de x
\\
 \\
 + \displaystyle\int_{\Omega}\sum\limits_{i=1}^{n}\sum\limits_{j=1}^{n} \left( \int_{Y^{\ast}}\left((D_2(y))_{ij}-\sum_{k=1}^{n}(D_2(y))_{ik}\dfrac{\partial \chi_2^{j}}{
\partial y_{k}}\left( y\right) \right) \de y\right)\dfrac{\partial c}{\partial x_{j}}\dfrac{\partial
\varphi}{\partial x_{i}}	\,\de x\\
\\
 =\displaystyle\int_{\Omega\times Y^{\ast}}F_{1}(y,c,c, c_{3})\varphi \de x \de y+
\int_{\Omega\times Y^{\ast}}F_{2}(y,c,c, c_{3})\varphi \de x \de y\text{ in }\mathcal{D}'(0,T)
 \end{array}
\end{equation*}
for all $\varphi \in H_0^{1}\left( \Omega \right)$, which means that $(c, c_3)$ satisfies the following problem
\begin{equation*}
\left\{
\begin{array}{ll}
2\displaystyle \dfrac{\partial c}{\partial t}-\sum\limits_{i=1}^{n}\dfrac{\partial
}{\partial x_{i}}\sum\limits_{j=1}^{n} \sum\limits_{l=1}^{2}\left(\dfrac{1}{|Y^\ast|} \int_{Y^{\ast}}\left((D_l(y))_{ij}-\sum_{k=1}^{n}(D_l(y))_{ik}\dfrac{\partial \chi_l^{j}}{
\partial y_{i}}\left( y\right) \right) \de y\right)\dfrac{\partial c}{\partial x_{j}}\\
\\
\\= \mathcal{M}_{Y^{\ast}}
(F_{1}(\cdot,c,c, c_{3}))+\mathcal{M}_{Y^{\ast}}
(F_{2}(\cdot,c,c, c_{3}))& \text{\rm in }(0,T)\times\Omega,\\
\\
c=0& \text{\rm on }(0,T)\times\partial\Omega.
\end{array}%
\right.
\end{equation*}
Thus, we are led to the homogenized equation for $c$ in \eqref{homsystem}, where the non-constant matrix $B(c_3)$ is defined through \eqref{matrixA0}. \\

We remark that, for any given $s \in \mathbb{R}$,
under the assumptions $(\mathbf{H}_1)$ and $(\mathbf{H}_3)$, one can prove, by using Lax-Milgram theorem, the existence and the
uniqueness, up to a constant, of a solution $(\chi_1^j(\cdot,s), \chi_2^j(\cdot, s)) \in H^1_{\textrm{per}}(Y^{\ast})/{\mathbb R} \times
H^1_{\textrm{per}}(Y^{\ast})/{\mathbb R} $ for problem
\eqref{cellprobl2}. The value of the constant needs to be the same.

The constant homogenized matrix $D^0$ is positive definite. By assumption $(\mathbf{H}_3)$, the non-constant homogenized matrix $B$ is uniformly coercive and uniformly bounded from above. Therefore, as in \cite{Allaire}, this implies the uniqueness of the solution of the limit problem and, thus, all the above convergence results hold for the whole sequences. So, the couple $(c, c_3)$ is the unique solution of problem \eqref{homsystem}, where the matrices $D^0$ and $B$ are defined by \eqref{matrixA} and \eqref{matrixA0}, respectively. \end{proof}\\

\begin{remark}\label{rem5}
We point out that in system \eqref{homsystem} we have two homogenized matrices, a standard one $D^0$ and a non-constant one $B$, generated by the special coupling and scalings of the boundary terms in the microscopic system \eqref{eqmicro}. Moreover, it is easy to prove that the positive definite constant homogenized diffusion matrix $D^0$ can be written also as
\begin{equation}\label{matrixAbis}
D^0_{ij}=\dfrac{1}{\vert Y^\ast \vert}\displaystyle \int_{Y^{\ast}} D_3(y)\left(e_i-\nabla_y \chi^i \right) \left(e_j-\nabla_y \chi^j\right)\de y\,\,\,\text{\rm for }i,j\in \{1,\dots,n\}
\end{equation}
and, for every $s\in \mathbb{R}$, the non-constant dispersion matrix $B(s)$ can also be written as
\begin{equation}\label{matrixA0bis}
\begin{array}{c}
(B(s))_{ij}=\dfrac{1}{\vert Y^\ast \vert}\displaystyle \int_{Y^{\ast}} D_1(y)\left(e_i-\nabla_y \chi_1^i\right)
\left(e_j-\nabla_y \chi_1^j\right)\de y+\dfrac{1}{\vert Y^\ast \vert}\displaystyle \int_{Y^{\ast}} D_2(y)\left(e_i-\nabla_y \chi_2^i\right)
\left(e_j-\nabla_y \chi_2^j\right)\de y\\
\\
+\dfrac{1}{\vert Y^\ast \vert}\displaystyle \int_{\Gamma} H(s) \left(\chi_1^i -
\chi_2^i\right)\left(\chi_1^j -\chi_2^j\right) \de \sigma_y,\,\,\,\text{\rm for }i,j\in \{1,\dots,n\}.
\end{array}
\end{equation}
Here, for $i\in \{1,\dots,n\}$, $\chi^i \in H^1_{\textrm{per}}(Y^{\ast})/{\Bbb R}$ verifies the local problem \eqref{cellprobl1}, while for every $s\in\mathbb{R}$, $\chi_1^i(\cdot,s), \, \chi_2^i (\cdot, s)\in H^1_{\textrm{per}}(Y^{\ast})/{\Bbb R}$ are, up to the addition of the same constant, the unique solutions of the local problem \eqref{cellprobl2}.
\end{remark}
\begin{remark}\label{remA0}
The fact that the non-linearity with respect to $c_3^\varepsilon$ passes from the reaction term, at the microscopic level, to the diffusion term, at the macroscopic one (see \eqref{homsystem}), is a manifestation of the different scaling of reaction terms on the boundary (see \eqref{eqmicro}). In other words, the diffusion of concentration $c_3^\varepsilon$ on the boundary is slower than the ones of $c_1^\varepsilon$ and $c_2^\varepsilon$. In particular, we observe that for small values of the concentration $c_3$, due to the Lipschitz continuity property of the function $H$ and the fact that $H(0)=0$, the cell problems \eqref{cellprobl2} become decoupled and the dispersion tensor $B$ is the sum of two matrices of the same type of $D^0$ carrying at the macroscopic level the contributions of each concentration $c^\varepsilon_i$, $i\in \{1,2\}$. On the other hand, for large values of $c_3$, if in particular $H$ is of Langmuir type (see \eqref{exampleH}), as in \cite{Allaire}, we have the saturation effect of the Langmuir isotherm. In this case, since by \eqref{exampleH} the limit of $H(s)$ for $s \to \infty$ is the constant $\dfrac{a}{b}$, if we assume that $c_3$ goes to infinity, then the cell problem corresponding to such an infinite reaction limit is no longer dependent on the homogenized solution $c_3$, but it remains coupled.
Of course, other mathematical settings could be considered in place of \eqref{formG} for getting such an effect and they will be dealt with in a forecoming paper.
\end{remark}
\begin{remark}
If in the microscopic system \eqref{eqmicro} all the reaction terms on $\Gamma^{\varepsilon}$ are scaled with $\varepsilon$, as, for instance, in \cite{Gahn}, i.e. if we consider the system
\begin{equation}
\left\{
\begin{array} {ll}
\partial_{t}c_{i}^{\varepsilon}-\operatorname{div}(D_i^\varepsilon \nabla c_{i}^{\varepsilon
})=F_{i}^{\varepsilon}%
(x,c_{1}^{\varepsilon},c_{2}^{\varepsilon}, c_{3}^{\varepsilon})&\text{ in
}(0,T)\times\Omega^{\ast}_{\varepsilon},\,\, i\in \{1,2,3\},\\
\\
D_i^\varepsilon \nabla c_{i}^{\varepsilon}\cdot\nu^\varepsilon=\varepsilon\, G_{i}(c_{1}^{\varepsilon}%
,c_{2}^{\varepsilon}, c_{3}^{\varepsilon})&\text{ on }(0,T)\times
\Gamma^{\varepsilon},\, \, i\in \{1,2\},\\
\\
D_3^\varepsilon \nabla c_{3}^{\varepsilon}\cdot\nu^\varepsilon=\varepsilon\, G_{3}^{\varepsilon}(x,c_{1}^{\varepsilon}%
,c_{2}^{\varepsilon}, c_{3}^{\varepsilon})&\text{ on }(0,T)\times
\Gamma^{\varepsilon},\\
\\
c_{i}^{\varepsilon}=0 & \text{ on }(0,T)\times\partial\Omega,\, \,  i\in \{1,2,3\},\\
\\
c_{i}^{\varepsilon}(0)=c_{i}^0 & \text{ in }\Omega^{\ast}_{\varepsilon},\, \,  i\in \{1,2,3\},
\end{array}
\right.  \label{eqmicro-v}%
\end{equation}
then it is not difficult to see that there exist $c_{i}\in
L^{2}(0,T;H_0^{1}(\Omega))$ and $\widehat{c}_{i}\in L^{2}((0,T)\times\Omega;H_{per}^{1}(Y^\ast)/\Bbb R)$, $i\in \{1,2,3\}$, such that
\begin{equation} \label{conv-v}
\left\{
\begin{array}{ll}
\mathcal{T}^{\ast}_{\varepsilon}(c_{i}^{\varepsilon})\rightharpoonup c_{i}&\text{\rm weakly  in }L^{2}((0,T)\times\Omega;H^{1}(Y^\ast)),\\[2mm]
\mathcal{T}^{\ast}_{\varepsilon}(\nabla c_{i}^{\varepsilon})\rightharpoonup\nabla c_{i}+\nabla_{y}\widehat{c}_{i}&\text{\rm weakly in }L^{2}((0,T)\times
\Omega\times Y^\ast),\\ [2mm]
\mathcal{T}^\ast_{\varepsilon } (c_i^\varepsilon) \rightarrow c_i & \text{\rm strongly in } L^2((0,T)\times \Omega\times Y^\ast).
\end{array}
\right.
\end{equation}
In this case, the limit function $(c_1, c_2, c_3)$ in \eqref{conv-v} is the unique solution of the following system:
\[
\left\{
\begin{array}{ll}
\dfrac{\partial c_1 }{\partial t}-\operatorname{div}
(D_1^0\nabla c_1)=  \mathcal{M}_{Y^{\ast}}
(F_{1}(\cdot,c_{1},c_{2},c_{3}))-(c_1-c_2)\, \dfrac{\vert \Gamma \vert}{\vert Y^\ast \vert}\,  H(c_{3})& \text{\rm in }(0,T)\times\Omega,\\
\\
\dfrac{\partial c_2 }{\partial t}-\operatorname{div}
(D_2^0\nabla c_2)=  \, \mathcal{M}_{Y^{\ast}}
(F_{2}(\cdot,c_{1},c_{2},c_{3}))+(c_1-c_2) \, \dfrac{\vert \Gamma \vert}{\vert Y^\ast \vert} \, H(c_{3})& \text{\rm in }(0,T)\times\Omega,\\
\\
\dfrac{\partial c_3 }{\partial t}-\operatorname{div}
(D_3^0\nabla c_3)= \mathcal{M}_{Y^{\ast}}
(F_{3}(\cdot,c_{1},c_{2},c_{3}))+\dfrac{\vert \Gamma \vert}{\vert Y^\ast \vert} \mathcal{M}_{\Gamma}
(G_{3}(\cdot,c_{1},c_{2},c_{3}))& \text{\rm in }(0,T)\times\Omega, \\
\\
c_i=0  & \text{\rm on } (0,T)\times\partial\Omega,\\
\\
c_i(x,0)=c_i^0  & \text{\rm in } \Omega,
\end{array}
\right.
\]
where the entries of the positive definite constant homogenized diffusion matrices $D_i^0$, $i\in\{1, 2, 3\}$, are of the same type of  \eqref{matrixA}.
\end{remark}

\bigskip

{\bf Acknowledgments.} This paper was completed during the visit of the last author at the University of Sannio, Department of Engineering, whose warm hospitality and support are gratefully acknowledged. The work was supported by the grant FFABR of MIUR. G.C. and C.P. are members of GNAMPA of INDAM.

\end{document}